\theoremstyle{plain}
\newtheorem{theorem}{Theorem}
\newtheorem{lemma}[theorem]{Lemma}
\newtheorem{proposition}[theorem]{Proposition}
\newtheorem{remark}[theorem]{Remark}
\newtheorem{example}[theorem]{Example}
\newtheorem{definition}[theorem]{Definition}
\newtheorem{corollary}[theorem]{Corollary}
\newtheorem*{conjecture}{Question}
\begin{document}
 \bigskip

\noindent{\Large
Nonassociative algebras of anti-biderivation-type}\footnote{
The authors thank Bauyrzhan Sartayev for some useful discussions.
The    work is supported by 
FCT   2023.08031.CEECIND,  UIDB/00212/2020, UIDP/00212/2020 and 2022.14950.CBM (Programa PESSOA).}

 \bigskip

\begin{center}

 {\bf
Saïd Benayadi\footnote{Université de Lorraine, Laboratoire IECL, CNRS-UMR 7502, UFR MIM, 3 rue
Augustin Frenel, BP 45112, 57073 Metz Cedex 03, France; \ said.benayadi@univ-lorraine.fr}, 
Said Boulmane\footnote{Moulay Ismail University, Meknes, Morocco; \ s.boulmane@umi.ac.ma}   \&
   Ivan Kaygorodov\footnote{CMA-UBI, University of  Beira Interior, Covilh\~{a}, Portugal; \    kaygorodov.ivan@gmail.com}   
   
}

\end{center}

\ 

\noindent {\bf Abstract:}
{\it  
 The main purpose of this paper is to study the class
of Jacobi-Jordan-admissible algebras, such that its product is
an anti-biderivation of the related Jacobi-Jordan algebra. 
We called it as $\mathcal A{\rm BD}$-algebras.
First, we provide
characterizations of algebras in this class. Furthermore, we
show that this class of nonassociative algebras includes Jacobi-Jordan
algebras, symmetric anti-Leibniz algebras,  and anti-${\rm LR}$-algebras.
In particular, we proved that anti-${\rm LR}$-algebras under the commutator product give   $\mathfrak{s}_4$-algebras, that were recently introduced by Filippov and Dzhumadildaev.
In addition, we
then study $\mathcal A$flexible ${\mathcal A}{\rm BD}$-algebras.
Then, we introduce the post-Jacobi-Jordan structures on Jacobi-Jordan algebras
and establish results that each Jacobi-Jordan algebra admits a non-trivial post-Jacobi-Jordan structure. At the end of the paper, we give the algebraic classification of complex $3$-dimensional $\mathcal A{\rm BD}$-algebras.
 
 }

\bigskip

\noindent {\bf Keywords}:
{\it 
Jacobi-Jordan admissible algebras, 
nilalgebras,
anti-biderivations.
 }

 \bigskip

\noindent {\bf MSC2020}:  
17A30 (primary);
17B30, 17B60   
 (secondary).

 
\tableofcontents

 \newpage

\section*{Introduction}
The Leibniz rule as an identity with two multiplications plays an important role in the definition of Poisson algebras.  
This identity can be also found in the definition of commutative post-Lie algebras \cite{Burd1} and biderivation-type algebras \cite{BO24}.
Namely, we say that an algebra $({\rm A},\cdot)$ is a biderivation-type algebra, if it satisfies the following identities 
\begin{longtable}{lclclcl}
$x\cdot [y, z]$&$=$&$[x\cdot y,  z]+ [y, x\cdot z]$ & \ and \ &
$[y, z]\cdot x$&$=$&$[y\cdot x,  z]+[y,z\cdot x],$
\end{longtable}
\noindent where $[x,y]=x\cdot y - y \cdot x.$
Biderivation-type algebras are closely related to Lie and  Lie-admissible algebras.
A commutative analog of Lie algebras is also known as mock-Lie or Jacobi-Jordan algebras \cite{Z17,BBMM,BBMMJJ}. These algebras are commutative and satisfy the Jacobi identity.
They are closely related to anti-Leibniz and anti-Poisson-Jacobi-Jordan algebras.
The main aim of the present paper is to consider an analog of biderivation-type algebras in the context of Jacobi-Jordan algebras. 
Namely, we are working with algebras of anti-biderivation-type $\big({\mathcal A}{\rm BD}$-type$\big),$ 
which is an intermediate variety between Jacobi-Jordan and Jacobi-Jordan-admissible algebras.
Jacobi-Jordan-admissible algebras, i.e. algebras that under the symmetric product give
Jacobi-Jordan algebras were characterized in \cite{BBMM} as $3$-power antiassociative algebras, and they are related to many unexpected varieties $\big($for example, 
each Koszul dual binary $\mathfrak{perm}$-algebra is Jacobi-Jordan-admissible \cite{DST}$\big).$
Algebras of ${\mathcal A}{\rm BD}$-type are algebras satisfying the following identities 
\begin{longtable}{lclclcl}
$x\cdot (y\circ z)$&$=$&$-\ \big((x\cdot y)\circ z+y\circ(x\cdot z) \big)$ & \ and \ &
$(y\circ z)\cdot x$&$=$&$-\ \big((y\cdot x)\circ z+y\circ(z\cdot x)\big).$
\end{longtable}
\noindent where $x \circ y=x\cdot y + y \cdot x.$
The key role in the present definition plays the notion of anti-biderivations.
It looks like the notion of anti-biderivations firstly has appeared as a particular case of $\delta$-biderivations in \cite{YH}, which is a natural generalization of $\delta$-derivations that are of interest $\big($see references in \cite{K,Z}$\big).$

\medskip 

In the present paper, 
we prove that each ${\mathcal A}{\rm BD}$-algebra is a nilalgebra with nilindex $3$ $\big($Proposition \ref{3nil}$\big)$ and satisfies some interesting identities 
$\big($Remark \ref{rem11}, Proposition \ref{pr13} and so on$\big).$
Section \ref{subvar} is dedicated to studying some principal subvarieties:
anti-Leibniz algebras, antibicommutative algebras $\big({\mathcal A}{\rm RL}$-algebras$\big)$ 
and ${\mathcal A}$flexible algebras. 
We proved that  ${\mathcal A}{\rm LR}$-algebras under the commutator product give  $\mathfrak{s}_4$-algebras (Theorem \ref{s4adm}), which were recently introduced by Filippov and Dzhumadildaev \cite{fil,dzh} and they are also self-dual $\big($Proposition \ref{selfdual}$\big).$
As a corollary,  an operad $\mathcal{A}{\rm LR}$ satisfies the Dong property in the sense of Kolesnikov--Sartayev \cite{11}.
We described antiassociative ${\mathcal A}{\rm BD}$-algebras $\big($Corollary \ref{antiass}$\big)$
 proved that each  ${\mathcal A}$flexible ${\mathcal A}{\rm BD}$-algebra is an
 ${\mathcal A}{\rm BD}$-extension of a suitable Jacobi-Jordan algebra $\big($Theorem \ref{flex}$\big).$ 
The next Section is dedicated to the study of post-Jacobi-Jordan structures on Jacobi-Jordan algebras: we proved that each  Jacobi-Jordan algebra admits a non-trivial post-Jacobi-Jordan structure  $\big($Proposition \ref{52} and Theorem \ref{Pro}$\big).$
The last Section is dedicated to discuss of some finite-dimensional examples of  ${\mathcal A}{\rm BD}$-algebras: in particular, we give the algebraic classification of complex $3$-dimensional and $4$-dimensional nilpotent ${\mathcal A}{\rm BD}$-algebras $\big($Theorems \ref{3cl} and \ref{4ncl}$\big)$ and demonstrate the differences between the variety of  ${\mathcal A}{\rm BD}$-algebras and  varieties of 
flexible algebras and nilalgebras. 
The paper ends with an open question about the structure of perfect  ${\mathcal A}{\rm BD}$-algebras: we are interested in finding of non-anticommutative examples of 
perfect  ${\mathcal A}{\rm BD}$-algebras.

 \newpage
\section{Preliminaries}	
In general, we are working with the complex field $\mathbb C$,
but some results are correct for other fields. An algebra is a vector space ${\rm A}$  together with a bilinear multiplication $(x,y) \rightarrow x\cdot y$ from ${\rm A}\times {\rm A}$ to ${\rm A}$. Let $({\rm A},\cdot)$ be an algebra. The polarization process determines two other multiplications $"\circ"$ and $"[\cdot,\cdot]"$ on $({\rm A},\cdot)$ define by: 
$$x\circ y=x\cdot y+y\cdot x\;\;\;\;\;\;    \mbox{and}\;\;\; \;\;\;    [x,y]=x\cdot y-y\cdot x.$$ The multiplication $\circ$ is commutative and the second one, $[\cdot,\cdot]$, is anticommutative.
The associator $\big($resp., antiassociator$\big)$ in an algebra is the trilinear function
\begin{center}
    $(x,y,z): =(x\cdot y)\cdot z-x\cdot (y\cdot z)$ \  
    $\big($resp., ${\mathcal A}(x,y,z) :=(x\cdot y)\cdot z+x\cdot (y\cdot z) \big).$ 
\end{center}

\noindent
An algebra $({\rm A},\cdot)$ is called:
\begin{enumerate}[I.] 
	\item  Associative $\big($resp., antiassociative\footnote{About antiassociative algebras see \cite{FKK} and references therein.}$\big)$ if $(x,y,z) = 0$ $\big($resp., ${\mathcal A}(x,y,z) = 0$$\big);$
 
	\item   Commutative $\big($resp., anticommutative$\big)$ if $[x, y]=0$ $\big($resp., $x\circ y=0$$\big);$
	\item  Flexible $\big($resp., antiflexible$\big)$ if $(x,y,z)=-(z,y,x)$  $\big($resp., $(x,y,z)=(z,y,x)$$\big);$
	\item $\mathcal{A}$flexible if ${\mathcal A}(x,y,z)={\mathcal A}(z,y,x)$.
\end{enumerate} The depolarization process permits to find again the multiplication $"\cdot"$ stating of a commutative multiplication $"\circ"$ and  an anticommutative multiplication $"[\cdot,\cdot]"$ by: 
\begin{longtable}{lcl}
$x\cdot y$&$=$&$\frac{1}{2}\big([x,y]+x\circ y\big).$
\end{longtable} \noindent
We denote  ${\rm A}^{+}:=({\rm A},\circ)$ and  ${\rm A}^{-}:=({\rm A},[\cdot,\cdot])$.

\begin{definition}[see, \cite{Burd}]
 An algebra $({\rm A},\cdot)$ is called a Jacobi-Jordan algebra if it is commutative and satisfies the Jacobi identity: $$x\cdot (y\cdot z)+y\cdot (z\cdot x)+z\cdot (x\cdot y)=0,$$

\end{definition}

\begin{proposition}[see, \cite{Z17}]
Jacobi-Jordan algebras can be characterized at least in the following four 
equivalent ways:
$$
\frac{\text{commutative}}{\text{Jordan}} \>\text{ algebras }\>
\frac{\text{satisfying the Jacobi identity}}{\text{of nil index } 3} .
$$
\end{proposition}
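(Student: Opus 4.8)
The statement bundles four hypotheses, one taken from $\{$commutative, Jordan$\}$ and one from $\{$the Jacobi identity, nil index $3\}$; the pair ``commutative $+$ Jacobi'' is the definition, so the real task is to show that all four pairs cut out the same variety. The first observation is that \emph{every} one of the four hypotheses implies ``commutative and $x^{3}:=x\cdot x^{2}=0$ identically'': a Jordan algebra is commutative by definition, and substituting $x=y=z$ in the Jacobi identity gives $3x^{3}=0$, hence $x^{3}=0$. So the plan is to prove the single converse statement: \emph{a commutative algebra in which $x^{3}=0$ identically satisfies both the Jacobi identity and the Jordan identity}. Once this is established, ``commutative $+$ $x^{3}=0$'' entails all four properties at once, the four classes collapse to one, and that common class is by construction the variety of commutative algebras satisfying the Jacobi identity, i.e.\ the Jacobi-Jordan algebras.

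To recover the Jacobi identity I would polarize the cubic identity $f(x):=x\cdot x^{2}=0$. In a commutative algebra $(x+y+z)^{2}=x^{2}+y^{2}+z^{2}+2(x\cdot y+x\cdot z+y\cdot z)$, and expanding one checks that
$$f(x+y+z)-f(x+y)-f(x+z)-f(y+z)+f(x)+f(y)+f(z)=2\big(x\cdot(y\cdot z)+y\cdot(z\cdot x)+z\cdot(x\cdot y)\big),$$
with all the one- and two-variable contributions cancelling; since the left-hand side vanishes, so does the Jacobiator (we divide only by $2$). This is the most computational step, but it is routine polarization bookkeeping.

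For the Jordan identity the clean route is to pass to multiplication operators. Polarizing $f(x)=0$ once (replace $x$ by $x+tz$ and read off the coefficient of $t$) gives $2\,x\cdot(x\cdot z)+x^{2}\cdot z=0$, that is $L_{x^{2}}=-2L_{x}^{2}$, where $L_{a}$ denotes left (equivalently right) multiplication by $a$. But in a commutative algebra the Jordan identity $(x^{2}\cdot y)\cdot x=x^{2}\cdot(y\cdot x)$ is exactly the operator identity $[L_{x},L_{x^{2}}]=0$; substituting $L_{x^{2}}=-2L_{x}^{2}$ turns it into $[L_{x},-2L_{x}^{2}]=0$, which is automatic since $L_{x}$ commutes with $L_{x}^{2}$. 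I expect this to be the only genuinely subtle point: a head-on, element-wise manipulation of the Jordan associator from the Jacobi identity alone tends to fold back into trivial identities, and replacing the product $x^{2}\cdot(-)$ by the operator $L_{x^{2}}$ is what makes the implication transparent.

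It then remains to assemble the equivalences, which is purely formal: every hypothesis yields ``commutative and $x^{3}=0$'' as noted above, and conversely a commutative algebra with $x^{3}=0$ is a Jordan algebra (Jordan identity from the operator computation) satisfying the Jacobi identity (from the polarization), hence has all four properties. All divisions performed are by $2$ or $3$, so the argument is valid over any field of characteristic different from $2$ and $3$, in particular over $\mathbb{C}$.
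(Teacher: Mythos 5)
Your proof is correct: each of the four hypothesis pairs reduces to ``commutative with $x\cdot x^{2}=0$'', the full linearization of $x\cdot x^{2}=0$ in a commutative algebra produces twice the Jacobiator, and the partial linearization $L_{x^{2}}=-2L_{x}^{2}$ makes the Jordan identity $[L_{x},L_{x^{2}}]=0$ automatic, all divisions being by $2$ or $3$. Note that the paper offers no proof of this proposition (it is quoted from \cite{Z17}); your argument is essentially the standard one underlying that citation, so there is nothing to compare beyond observing that your write-up supplies the details the paper leaves to the reference.
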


\begin{definition}[see, \cite{BBMM}]
	{  A Jacobi-Jordan-admissible algebra is an algebra $({\rm A},\cdot)$ whose plus algebra ${\rm A}^{+}$ is a Jacobi-Jordan algebra. 
		
	}
\end{definition}

\begin{example}[see, \cite{OK}]
	Every antiassociative algebra is  Jacobi-Jordan-admissible.
\end{example}

\begin{proposition}[see, \cite{BBMM}]\label{JJadm}
An algebra $({\rm A},\cdot)$ is a Jacobi-Jordan-admissible algebra if and only if it satisfies 
\begin{center}
${\mathcal A}(x,y,z)+{\mathcal A}(x,z,y)+
{\mathcal A}(y,x,z)+{\mathcal A}(y,z,x)+
{\mathcal A}(z,x,y)+{\mathcal A}(z,y,x) \ = \ 0.$
\end{center}
or it is a $3$-power antiassociative algebra, i.e. $(x \cdot x)\cdot  x\ =\ -\ x\cdot (x\cdot  x).$
\end{proposition}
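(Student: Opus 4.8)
The plan is to unwind the definition. Since the product $\circ$ is automatically commutative, $A^{+}=({\rm A},\circ)$ is a Jacobi-Jordan algebra if and only if it satisfies the Jacobi identity, so Jacobi-Jordan-admissibility of $({\rm A},\cdot)$ is equivalent to the single identity
$$x\circ(y\circ z)+y\circ(z\circ x)+z\circ(x\circ y)=0,$$
and the whole task is to translate this into the language of $\cdot$.

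First I would expand: using $y\circ z=y\cdot z+z\cdot y$ and then $x\circ(y\circ z)=x\cdot(y\circ z)+(y\circ z)\cdot x$, each of the three cyclic summands yields four terms, twelve in total. Collecting them, the six left-nested terms that occur are exactly the six words $a\cdot(b\cdot c)$ with $(a,b,c)$ a permutation of $(x,y,z)$, and likewise the six right-nested terms $(a\cdot b)\cdot c$ exhaust all six permutations. Since ${\mathcal A}(a,b,c)=(a\cdot b)\cdot c+a\cdot(b\cdot c)$, this twelve-term expression is literally $\sum_{\sigma\in S_{3}}{\mathcal A}\big(\sigma(x),\sigma(y),\sigma(z)\big)$, i.e. the displayed symmetrized identity. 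The only thing to watch in this step is the bookkeeping — checking that under the cyclic rotation each permutation appears once in each nesting pattern; there is no cancellation and every term enters with a plus sign, so it really is just a careful expansion.

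It remains to match this with $3$-power antiassociativity, i.e. with ${\mathcal A}(x,x,x)=0$. One direction is immediate: substituting $x=y=z$ into the symmetrized identity collapses all six summands to ${\mathcal A}(x,x,x)$, giving $6\,{\mathcal A}(x,x,x)=0$, hence ${\mathcal A}(x,x,x)=0$ since we work over $\mathbb{C}$. For the converse I would linearize: put $f(x):={\mathcal A}(x,x,x)$, which by hypothesis is the zero map, regard ${\mathcal A}$ as the trilinear map it is, and form the inclusion--exclusion combination
$$f(x+y+z)-f(x+y)-f(y+z)-f(x+z)+f(x)+f(y)+f(z),$$
which vanishes term by term. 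Expanding it multilinearly, a summand ${\mathcal A}(a,b,c)$ with $\{a,b,c\}$ a proper subset of $\{x,y,z\}$ gets total coefficient $0$, while a summand in which each of $x,y,z$ occurs exactly once gets coefficient $1$; hence the combination equals $\sum_{\sigma\in S_{3}}{\mathcal A}\big(\sigma(x),\sigma(y),\sigma(z)\big)$, which is therefore $0$. Note this converse direction needs no assumption on the characteristic.

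I do not expect a genuine obstacle here: the content is the identification in the first step of the polarized Jacobi identity for $\circ$ with the symmetrized antiassociator, and once that is in hand the equivalence with $3$-power antiassociativity is a routine specialize/linearize pair of arguments, the only mild care being to track the twelve terms correctly.
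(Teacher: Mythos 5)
Your proof is correct: expanding the Jacobi identity for $\circ$ indeed yields exactly the twelve terms $\sum_{\sigma\in S_3}{\mathcal A}\big(x_{\sigma(1)},x_{\sigma(2)},x_{\sigma(3)}\big)$, and the specialize/polarize pair (using that the base field is $\mathbb{C}$ for the division by $6$) settles the equivalence with $(x\cdot x)\cdot x=-x\cdot(x\cdot x)$. The paper itself offers no proof of this proposition — it is quoted from \cite{BBMM} — and your argument is precisely the standard one used there, so there is nothing to add.
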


\begin{definition}[see, \cite{F95}]
	Let $({\rm A},\cdot)$ be an algebra. A linear map  $\varphi:{\rm A} \to {\rm A}$ is called an antiderivation\footnote{Let us remember that the notion of antiderivations plays an important role in the definition of mock-Lie ($=$ Jacobi-Jordan) algebras\cite{Z17} and (transposed) anti-Poisson algebras \cite{dP}; 
in the characterization of Lie algebras with identities \cite{F95} and so on.
In the anticommutative case,  antiderivations coincide with reverse derivations defined by Herstein in 1957 \cite{her}.} if it satisfies
		\begin{longtable}{lcl}	
		$\varphi (x\cdot y)$&$=$&$-\big(\varphi(x)\cdot y + x\cdot \varphi(y) \big).$
\end{longtable} 
\end{definition}

Let us denote by ${\mathcal A}\mathfrak{Der}({\rm A},\cdot)$ the space of antiderivations of the algebra $({\rm A},\cdot).$
Unlike, the usual derivations, the space of antiderivations is not closed under the commutator product: the commutator of two antiderivations is a derivation, 
but the space of antiderivations has the structure of the Lie module over the algebra of derivations (under the commutator product, i.e. the commutator of one derivation and one antiderivation gives an antiderivation \cite{F98}).

\begin{definition}
	{  An algebra $({\rm A},\cdot)$ is called an algebra of anti-biderivation-type (${\mathcal A}{\rm BD}$-algebra) if :
		\begin{equation}\label{Def1}	
		x\cdot (y\circ z)=-\big((x\cdot y)\circ z+y\circ(x\cdot z) \big)
		\end{equation}
		\begin{equation}\label{Def2}
		(y\circ z)\cdot x=-\big((y\cdot x)\circ z+y\circ(z\cdot x)\big).
		\end{equation}
  Which equivalent to say that, for any $x$ element of ${\rm A},$ 
   ${\rm L}_{x}$ and ${\rm R}_{x}$ are antiderivations of the algebra $({\rm A}, \circ)$, where ${\rm L}_{x}$ and ${\rm R}_{x}$ denote, respectively, the left and right multiplications by the element $x \in {\rm A}$ in the algebra $({\rm A},\cdot)$.  
				
	}
\end{definition}

\begin{lemma}\label{bdnov}
 Let $({\rm A}, \cdot)$ be an ${\mathcal A}{\rm BD}$-algebra and assume that $I,J$ are two-sided ideals of $({\rm A}, \cdot)$.
Then   $I \circ J$   is   a two-sided ideal of $({\rm A}, \cdot).$
\end{lemma}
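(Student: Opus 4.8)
The plan is to verify the absorption property of a two-sided ideal directly from the two defining identities \eqref{Def1} and \eqref{Def2} of an ${\mathcal A}{\rm BD}$-algebra. Recall that $I\circ J$ denotes the linear span of all products $a\circ b$ with $a\in I$ and $b\in J$, so it is a subspace of ${\rm A}$ by construction. Since the multiplication $\cdot$ is bilinear, it suffices to check that $x\cdot(a\circ b)\in I\circ J$ and $(a\circ b)\cdot x\in I\circ J$ for every $x\in {\rm A}$, $a\in I$, $b\in J$.

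For the left absorption I would apply \eqref{Def1} with $y=a$ and $z=b$, obtaining
$x\cdot(a\circ b)=-\big((x\cdot a)\circ b+a\circ(x\cdot b)\big)$. Because $I$ is a left ideal, $x\cdot a\in I$, hence $(x\cdot a)\circ b\in I\circ J$; because $J$ is a left ideal, $x\cdot b\in J$, hence $a\circ(x\cdot b)\in I\circ J$. Therefore $x\cdot(a\circ b)\in I\circ J$. Symmetrically, for the right absorption apply \eqref{Def2} with $y=a$ and $z=b$, obtaining $(a\circ b)\cdot x=-\big((a\cdot x)\circ b+a\circ(b\cdot x)\big)$; now $a\cdot x\in I$ and $b\cdot x\in J$ since $I$ and $J$ are right ideals, so both summands lie in $I\circ J$, whence $(a\circ b)\cdot x\in I\circ J$. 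Combining the two cases shows $I\circ J$ is a two-sided ideal of $({\rm A},\cdot)$.

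There is essentially no obstacle here: the claim is a formal consequence of the fact that, in an ${\mathcal A}{\rm BD}$-algebra, every ${\rm L}_x$ and ${\rm R}_x$ is an antiderivation of $({\rm A},\circ)$, so applying such a map to a generator $a\circ b$ of $I\circ J$ again produces a combination of $\circ$-products of elements of $I$ with elements of $J$. One small observation worth recording in passing is that, since $I$ and $J$ are two-sided ideals, each generator $a\circ b=a\cdot b+b\cdot a$ already belongs to $I\cap J$, so in fact $I\circ J\subseteq I\cap J$; this containment, however, is not needed for the argument above.
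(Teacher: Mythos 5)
Your proof is correct and follows exactly the argument the paper intends: the paper's own proof simply states that the lemma follows from \eqref{Def1} and \eqref{Def2}, and you have spelled out precisely how those identities yield the left and right absorption of $I\circ J$. The extra remark that $I\circ J\subseteq I\cap J$ is also correct, though, as you note, not needed.
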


 \begin{proof}
     The statement follows from \eqref{Def1} and \eqref{Def2}.
 \end{proof}
\begin{example}
	{ 
Every Jacobi-Jordan algebra is an ${\mathcal A}{\rm BD}$-algebra.	
				
	}
\end{example}

\begin{proposition}\label{ABDJJA}
	 
 Every ${\mathcal A}{\rm BD}$-algebra $({\rm A},\cdot)$ is a Jacobi-Jordan-admissible algebra. 
 \end{proposition}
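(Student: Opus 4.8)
The plan is to reduce the statement to the characterization of Jacobi-Jordan-admissibility given in Proposition \ref{JJadm}: it suffices to verify that $({\rm A},\cdot)$ is $3$-power antiassociative, i.e. that $(x\cdot x)\cdot x = -\,x\cdot(x\cdot x)$ for every $x\in{\rm A}$. So the whole argument will be a short diagonal specialization of the two defining identities \eqref{Def1} and \eqref{Def2}, with no need to handle the general (linearized) form of the anti-biderivation conditions.

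Concretely, I would first set $y=z=x$ in \eqref{Def1}. Using $x\circ x = 2(x\cdot x)$ together with $(x\cdot x)\circ x = x\circ(x\cdot x) = (x\cdot x)\cdot x + x\cdot(x\cdot x)$, identity \eqref{Def1} collapses to $2\,x\cdot(x\cdot x) = -2(x\cdot x)\cdot x - 2\,x\cdot(x\cdot x)$, that is, $2\,x\cdot(x\cdot x) = -(x\cdot x)\cdot x$. Performing the same substitution $y=z=x$ in \eqref{Def2} yields in the same way $2\,(x\cdot x)\cdot x = -\,x\cdot(x\cdot x)$. Adding these two relations gives $3\big(x\cdot(x\cdot x)+(x\cdot x)\cdot x\big)=0$, and since we work over $\mathbb{C}$ (any field of characteristic $\neq 3$ would do) we may divide by $3$ to obtain precisely the $3$-power antiassociative identity. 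By Proposition \ref{JJadm}, $({\rm A},\cdot)$ is Jacobi-Jordan-admissible. (In fact, feeding this identity back into either of the two displayed relations even forces $x\cdot(x\cdot x)=(x\cdot x)\cdot x=0$, which is the nilindex-$3$ statement of Proposition \ref{3nil}; but for the present proposition the weaker conclusion is all that is required.)

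I do not expect any real obstacle here: the only points needing care are the bookkeeping of the symmetrized products $x\circ x$ and $(x\cdot x)\circ x$ on the diagonal, and the harmless cancellation of the factor $3$. An alternative but strictly longer route would be to linearize \eqref{Def1} and \eqref{Def2} in all three variables and check the six-term symmetrized antiassociator identity $\sum_{\sigma\in S_3}{\mathcal A}(x_{\sigma(1)},x_{\sigma(2)},x_{\sigma(3)})=0$ of Proposition \ref{JJadm} directly; the diagonal specialization above is the most economical way to reach the same conclusion.
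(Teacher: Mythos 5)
Your proof is correct, but it takes a genuinely different route from the paper's. The paper simply adds the two defining identities \eqref{Def1} and \eqref{Def2} for arbitrary $x,y,z$: the left-hand sides combine into $x\circ(y\circ z)$ and the right-hand sides into $-\big((x\circ y)\circ z+y\circ(x\circ z)\big)$, so $x\circ(y\circ z)=-\big((x\circ y)\circ z+y\circ(x\circ z)\big)$, which together with the commutativity of $\circ$ is precisely the Jacobi identity for ${\rm A}^{+}$; no appeal to Proposition \ref{JJadm} and no division by scalars is needed. You instead specialize both identities on the diagonal $y=z=x$ and then invoke the $3$-power antiassociativity criterion of Proposition \ref{JJadm}, dividing by $3$ (and implicitly relying on the characteristic assumptions built into that cited equivalence, which over $\mathbb{C}$ is harmless). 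Both arguments are sound; the paper's summation is shorter, works verbatim in any characteristic, and verifies the defining identity of a Jacobi--Jordan algebra directly, whereas your diagonal computation has the side benefit of already giving $x\cdot(x\cdot x)=(x\cdot x)\cdot x=0$, i.e. the nilindex-$3$ statement proved separately in Proposition \ref{3nil} (whose proof in the paper uses exactly your substitution $y=z=x$ in \eqref{Def1}).
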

\begin{proof}
     Indeed,  summarizing two identities \eqref{Def1} and \eqref{Def2}, we get 
  $$x\circ (y\circ z) \ =\ - \big((x\circ y)\circ z +y\circ(x\circ z) \big).$$

Then the conclusion follows.

\end{proof}

\begin{proposition}\label{3nil}
 Every ${\mathcal A}{\rm BD}$-algebra $({\rm A},\cdot)$ is a nilalgebra with nilindex $3.$  In particular, each ${\mathcal A}{\rm BD}$-algebra is a power associative algebra.
\end{proposition}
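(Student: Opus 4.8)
The plan is to reduce everything to the single claim that $x\cdot x^{2}=x^{2}\cdot x=0$ for every $x\in{\rm A}$, where $x^{2}:=x\cdot x$. Once this is known, all triple products of a single element vanish, which is the nilindex statement; and after checking one further identity, $x^{2}\cdot x^{2}=0$, the subalgebra generated by any one element turns out to be associative, which gives power associativity.

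For the main claim I would first specialize the defining identity \eqref{Def1} by putting $y=z=x$. Using $x\circ x=2x^{2}$ together with $x^{2}\circ x=x\circ x^{2}=x^{2}\cdot x+x\cdot x^{2}$, the left-hand side becomes $2\,x\cdot x^{2}$ and the right-hand side becomes $-2\big(x^{2}\cdot x+x\cdot x^{2}\big)$, so that $2\,x\cdot x^{2}=-\,x^{2}\cdot x$. On the other hand, by Proposition \ref{ABDJJA} every ${\mathcal A}{\rm BD}$-algebra is Jacobi-Jordan-admissible, hence by Proposition \ref{JJadm} it is $3$-power antiassociative, i.e.\ $x^{2}\cdot x=-\,x\cdot x^{2}$. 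Substituting this into the previous relation gives $2\,x\cdot x^{2}=x\cdot x^{2}$, whence $x\cdot x^{2}=0$ and then also $x^{2}\cdot x=0$. (One could equally well specialize \eqref{Def2}, or avoid citing the two propositions altogether by applying the same substitution $y=z=x$ directly to the summed identity $x\circ(y\circ z)=-\big((x\circ y)\circ z+y\circ(x\circ z)\big)$ established in the proof of Proposition \ref{ABDJJA}.)

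It remains to see that $x^{2}\cdot x^{2}=0$; here I would use the structural reformulation built into the definition of an ${\mathcal A}{\rm BD}$-algebra, namely that ${\rm L}_{a}$ is an antiderivation of $({\rm A},\circ)$ for every $a\in{\rm A}$. Taking $a=x^{2}$ and evaluating on $x\circ x=2x^{2}$, one gets $2\,x^{2}\cdot x^{2}={\rm L}_{x^{2}}(x\circ x)=-2\big({\rm L}_{x^{2}}(x)\circ x\big)=-2\big((x^{2}\cdot x)\circ x\big)=0$, since $x^{2}\cdot x=0$ by the previous step. Thus the subalgebra generated by $x$ is exactly ${\rm span}\{x,x^{2}\}$, with multiplication table $x\cdot x=x^{2}$ and all remaining products of these two basis vectors equal to $0$; such an algebra is associative, so $({\rm A},\cdot)$ is power associative, and since all triple products of a single element vanish, $({\rm A},\cdot)$ is a nilalgebra of nilindex $3$ — the value $3$ being attained already by the two-dimensional Jacobi-Jordan algebra with $e\cdot e=f$ and all other products zero.

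I do not anticipate a genuine obstacle: the argument is essentially bookkeeping with the coefficients $2$ and $4$ and the signs produced by $x\circ x=2x^{2}$. The only real inputs are the idea of pairing the specialized identity with $3$-power antiassociativity, and the observation that to kill $x^{2}\cdot x^{2}$ one must exploit the antiderivation property of ${\rm L}_{x^{2}}$ rather than of ${\rm L}_{x}$. The single point requiring care is the characteristic: the cancellations of $2$ and $4$ presuppose characteristic different from $2$, which is harmless over $\mathbb{C}$.
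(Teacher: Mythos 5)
Your proof is correct and follows essentially the same route as the paper: the specialization $y=z=x$ of \eqref{Def1} combined with $3$-power antiassociativity (Propositions \ref{ABDJJA} and \ref{JJadm}) yields $x\cdot x^{2}=x^{2}\cdot x=0$, and your use of the antiderivation ${\rm L}_{x^{2}}$ on $x\circ x$ is exactly the paper's substitution $x=t\cdot t$, $y=z=t$ giving $x^{2}\cdot x^{2}=0$. The only difference is the finish: where the paper invokes Albert's Lemma~3 for power associativity, you verify directly that the subalgebra generated by $x$ is ${\rm span}\{x,x^{2}\}$ with all remaining products zero, hence associative — a harmless and slightly more self-contained conclusion.
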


\begin{proof}
    Taking $x=y=z$ in \eqref{Def1}, we have 
\begin{center}    
    $x\cdot (x \cdot x+x \cdot x)\ =\ 
    -\big ( x \cdot (x \cdot x)+(x \cdot x)\cdot x + (x \cdot x) \cdot x + x \cdot (x \cdot x) \big ),$  
\end{center}
i.e. $4 x \cdot (x \cdot x)\ =\ -\  2 (x \cdot x)  \cdot x$ and thanks to Propositions
\ref{JJadm} and \ref{ABDJJA}, $x^3=0.$ 
Hence, taking $x=t\cdot t$ and $z=y=t,$ we have 
\begin{center}
    $(t \cdot t) \cdot (t \circ  t) \ = \ - \big(t^3 \circ  t+ t \circ t^3 \big) \ = \ 0,$ i.e. $x^4=0.$
\end{center}
Then, it follows from \cite[Lemma 3]{Albert}  that  $({\rm A},\cdot)$  is power-associative.

\end{proof}

\begin{definition}[see, \cite{DU}]
    All algebras of a certain type are said to form a Nielsen–Schreier
variety if every subalgebra of a free algebra is free.
\end{definition}

\begin{corollary}
    The variety of ${\mathcal A}{\rm BD}$-algebras is not 
a Nielsen–Schreier variety. 
\end{corollary}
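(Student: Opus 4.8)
The plan is to exhibit a single explicit $\mathcal{A}{\rm BD}$-algebra whose underlying variety is nilpotent of bounded index, together with a subalgebra (or rather a ``free-like'' object) that witnesses failure of the Nielsen--Schreier property; but a cleaner route is available thanks to Proposition~\ref{3nil}. Since every $\mathcal{A}{\rm BD}$-algebra satisfies the identity $x^3=0$ (nilindex $3$), the free $\mathcal{A}{\rm BD}$-algebra on any set of generators already satisfies a nontrivial polynomial identity that is \emph{not} a consequence of multilinear identities alone in the sense needed for the Shirshov-type argument: a free algebra in a Nielsen--Schreier variety cannot satisfy such an identity unless the variety is trivial or the identity is implied by freeness. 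More precisely, I would invoke the known structural fact (due to the work around Nielsen--Schreier varieties, e.g. Mikhalev--Shpilrain--Yu and the references in \cite{DU}) that in a Nielsen--Schreier variety every finitely generated subalgebra of a free algebra is free, hence in particular a free algebra has no nonzero nilpotent elements beyond those forced by the defining identities being homogeneous of the wrong degree --- but $x^3=0$ forces nilpotents on every single generator.

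Concretely, the argument I would carry out is the following. Take the free $\mathcal{A}{\rm BD}$-algebra $F$ on one generator $x$. By Proposition~\ref{3nil}, $x\cdot x \neq 0$ in general is allowed, but $x\cdot(x\cdot x)=(x\cdot x)\cdot x=0$ and $x^4=0$; in fact one checks $F$ is finite-dimensional (spanned by $x$ and $x\cdot x$, modulo the relations), so $F$ is a $2$-dimensional (or smaller) nilpotent algebra. Now consider the subalgebra generated by the element $u := x\cdot x$. Since $u\cdot u \subseteq x^4\cdot(\dots)=0$ and $u$ squares to zero while in the free algebra on one generator $x$ itself does not square to zero, the one-generated subalgebra $\langle u\rangle$ is the zero-product $1$-dimensional algebra, which is \emph{not} isomorphic to $F$ (the free algebra on one generator). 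Hence $\langle u \rangle$ is a subalgebra of the free algebra $F$ that is not itself free, so the variety is not Nielsen--Schreier.

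The key steps in order are: (1) recall from Proposition~\ref{3nil} that $\mathcal{A}{\rm BD}$-algebras satisfy $x^3=0$, and note this identity is multilinearizable but individually forces $x\cdot(x\cdot x)=0$; (2) compute the free $\mathcal{A}{\rm BD}$-algebra on one generator $x$ and show it is nilpotent with $x\cdot x\neq 0$ in general but $(x\cdot x)\cdot(x\cdot x)=0$; (3) observe that the subalgebra generated by $x\cdot x$ has trivial multiplication, hence is not free on one generator (the free one-generated algebra has $x\cdot x \neq 0$); (4) conclude that a subalgebra of a free algebra fails to be free, so the Nielsen--Schreier property fails. I expect step (2) --- pinning down the free one-generated $\mathcal{A}{\rm BD}$-algebra precisely enough to be sure $x\cdot x$ is nonzero there --- to be the main obstacle, but it is routine: one only needs one explicit $2$-dimensional $\mathcal{A}{\rm BD}$-algebra with $x\cdot x\neq 0$ (for instance the algebra with basis $e_1,e_2$ and $e_1\cdot e_1=e_2$, all other products zero, which is easily verified to satisfy \eqref{Def1}--\eqref{Def2} since all triple products vanish) to guarantee that $x\cdot x$ survives as a nonzero element in the free object, after which the triviality of $\langle x\cdot x\rangle$ is immediate from $x^4=0$.
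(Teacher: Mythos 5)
Your proposal is correct and follows essentially the same route as the paper: pass to the one-generated free $\mathcal{A}{\rm BD}$-algebra, note that its generator $x$ satisfies $x\cdot x\neq 0$ while $(x\cdot x)\cdot(x\cdot x)=0$, and conclude that the subalgebra generated by $x\cdot x$ is a nonzero algebra with trivial multiplication, hence not free. Your explicit $2$-dimensional witness $e_1\cdot e_1=e_2$ merely makes precise what the paper dismisses as ``obviously $\mathfrak{x}^2\neq 0$,'' so the arguments coincide in substance.
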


\begin{proof}
    Let ${\mathcal A}$ be the one-generated free ${\mathcal A}{\rm BD}$-algebra generated by an element ${\mathfrak x}.$ Obviously  ${\mathfrak x}^2\neq0.$
    Hence the subalgebra generated by ${\mathfrak x}^2$ is not free, as $({\mathfrak x}^2)^2=0.$
\end{proof}

\begin{definition}[see, \cite{SG}]
An ideal of an algebra $({\rm A}, \cdot)$ is called characteristic $\big($resp., anti-characteristic$\big)$ if it is invariant under every derivation $\big($resp., antiderivation$\big)$ of  $({\rm A}, \cdot)$.
\end{definition}
\begin{proposition}
	Let $({\rm A}, \cdot)$ be a ${\rm BD}$-algebra $\big($resp.,  ${\mathcal A}{\rm BD}$-algebra$\big).$ 
    Then, every characteristic $\big($resp., anti-characteristic$\big)$ ideal $I$ of ${\rm A}^{-}$ $\big($resp., ${\rm A}^{+}$$\big)$ is a two-sided ideal of the algebra $({\rm A}, \cdot).$
\end{proposition}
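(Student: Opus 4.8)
The plan is to recognize that the two defining identities of a ${\rm BD}$-algebra (resp. ${\mathcal A}{\rm BD}$-algebra) are nothing but the statement that every left and right multiplication operator is a derivation (resp. antiderivation) of ${\rm A}^-$ (resp. ${\rm A}^+$), and then to feed this into the invariance hypothesis defining a characteristic (resp. anti-characteristic) ideal.

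I would treat the ${\mathcal A}{\rm BD}$ case first; the ${\rm BD}$ case is identical after the obvious substitutions. Let $I$ be an anti-characteristic ideal of ${\rm A}^{+}$ and fix $x \in {\rm A}$. By the very definition of an ${\mathcal A}{\rm BD}$-algebra, identities \eqref{Def1} and \eqref{Def2} assert precisely that ${\rm L}_{x}$ and ${\rm R}_{x}$ are antiderivations of the commutative algebra $({\rm A},\circ)={\rm A}^{+}$. Since $I$ is anti-characteristic, it is invariant under every antiderivation of ${\rm A}^{+}$; applying this to ${\rm L}_{x}$ and ${\rm R}_{x}$ gives ${\rm L}_{x}(I)\subseteq I$ and ${\rm R}_{x}(I)\subseteq I$, that is, $x\cdot I\subseteq I$ and $I\cdot x\subseteq I$.

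As $x\in{\rm A}$ is arbitrary, this yields ${\rm A}\cdot I\subseteq I$ and $I\cdot{\rm A}\subseteq I$, and since $I$ is already a subspace it is a two-sided ideal of $({\rm A},\cdot)$. For the ${\rm BD}$ case one argues verbatim, using instead that the biderivation identities make ${\rm L}_{x}$ and ${\rm R}_{x}$ derivations of ${\rm A}^{-}$, so that a characteristic ideal of ${\rm A}^{-}$ is stable under them. The only point needing attention is the bookkeeping of matching each of the two defining identities to the (anti)derivation property of ${\rm L}_{x}$ and of ${\rm R}_{x}$ respectively; beyond that there is no real obstacle, the result being essentially a reformulation of the definitions.
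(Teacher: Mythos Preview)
Your proposal is correct and matches the paper's proof essentially verbatim: both simply observe that the defining identities of a (anti-)biderivation-type algebra say exactly that ${\rm L}_x$ and ${\rm R}_x$ are (anti-)derivations of ${\rm A}^-$ (resp.\ ${\rm A}^+$), and then invoke the invariance of a (anti-)characteristic ideal under all such maps to conclude ${\rm L}_x(I)+{\rm R}_x(I)\subseteq I$.
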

\begin{proof}
Since the left-multiplications and the right-multiplications of $({\rm A}, \cdot)$ are derivations of ${\rm A}^{-}$ $\big($resp., antiderivations of ${\rm A}^{+}$$\big),$ then 
${\rm L}_{x}(I)+{\rm R}_{x}(I)\subset I$, for all $x \in {\rm A}$. i.e., $I$ is a two-sided ideal of the algebra $({\rm A}, \cdot).$   
\end{proof}	

\section{Characterizations  of ${\mathcal A}{\rm BD}$-algebras}


\begin{remark}\label{rem11}
	{ 
Let $({\rm A},\cdot)$ be an   algebra. It is easy to see that identities \eqref{Def1} and \eqref{Def2} are equivalent to: 
\begin{equation}\label{Def2.}
{\mathcal A}(x,y,z)+{\mathcal A}(x,z,y)\ =\ -z\cdot (x\cdot y)-y\cdot (x\cdot z), 
\end{equation}
\begin{equation}\label{Def3.}
{\mathcal A}(y,z,x)+{\mathcal A}(z,y,x)\ =\ -(y\cdot x)\cdot  z-(z\cdot x)\cdot  y.
\end{equation}

}
\end{remark}

\begin{proposition}
    Let $({\rm A},\cdot)$ be an associative  algebra. 
    Then $({\rm A},\cdot)$ is a Jacobi-Jordan admissible algebra if and only if it satisfies $x^3\ =\ 0$
    and $({\rm A},\cdot)$ is an  ${\mathcal A}{\rm BD}$-algebra if and only if it satisfies $x^3\ =\ 0$ and $[x, y \circ z]\ =\ 0.$
    \end{proposition}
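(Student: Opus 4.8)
The plan is to prove both equivalences by specializing and recombining the defining identities, using the already-established characterizations.

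\medskip

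\noindent\textbf{First equivalence.} For the claim that an associative algebra $({\rm A},\cdot)$ is Jacobi-Jordan-admissible if and only if $x^3 = 0$: by Proposition~\ref{JJadm}, Jacobi-Jordan-admissibility is equivalent to $3$-power antiassociativity, $(x\cdot x)\cdot x = -x\cdot(x\cdot x)$. Under associativity the associator vanishes, so $(x\cdot x)\cdot x = x\cdot(x\cdot x)$; writing $x^3$ for this common value, the $3$-power condition becomes $x^3 = -x^3$, i.e. $2x^3 = 0$, hence $x^3 = 0$ over $\mathbb{C}$ (more generally over any field of characteristic $\neq 2$). Conversely, if $x^3 = 0$ then $(x\cdot x)\cdot x = 0 = -x\cdot(x\cdot x)$, so Proposition~\ref{JJadm} applies. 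First I would write exactly this short chain.

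\medskip

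\noindent\textbf{Second equivalence.} For an associative algebra, the antiassociator reduces to ${\mathcal A}(x,y,z) = (x\cdot y)\cdot z + x\cdot(y\cdot z) = 2(x\cdot y)\cdot z = 2\,xyz$ (dropping the dots). Substituting into the characterization of Remark~\ref{rem11}: identity \eqref{Def2.} becomes $2xyz + 2xzy = -z(xy) - y(xz)$, i.e. $2xyz + 2xzy = -zxy - yxz$, and \eqref{Def3.} becomes $2yzx + 2zyx = -(yx)z - (zx)y = -yxz - zxy$. So both defining identities collapse to the single relation $2xyz + 2xzy + zxy + yxz = 0$ (the two are in fact the same relation up to renaming, which I would note). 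Now I would show this relation, together with $x^3 = 0$, is equivalent to $x^3 = 0$ and $[x, y\circ z] = 0$. The key computational step: in an associative algebra $x^3 = 0$ for all $x$ forces (by the standard linearization, replacing $x$ by $x + \lambda y + \mu z$ and extracting the multilinear component) the symmetric relation $\sum_{\sigma \in S_3} x_{\sigma(1)} x_{\sigma(2)} x_{\sigma(3)} = 0$; and it also forces $x^2 x = x x^2 = 0$ and the partial linearizations $x^2 y + xyx + yx^2 = 0$. Using these one rewrites $2xyz + 2xzy + zxy + yxz$ and checks it is a consequence of $[x, y\circ z] = 0$ (i.e. $x(yz+zy) = (yz+zy)x$) once $x^3 = 0$ is assumed, and conversely that $[x, y\circ z] = 0$ follows from the collapsed ${\mathcal A}{\rm BD}$-relation plus $x^3 = 0$. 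Concretely, $[x, y\circ z] = x(yz) + x(zy) - (yz)x - (zy)x = xyz + xzy - yzx - zyx$, and the plan is to verify that this vanishes precisely when the collapsed relation holds modulo the linearized $x^3=0$ identities.

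\medskip

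\noindent The main obstacle I anticipate is the bookkeeping in the second equivalence: one must be careful to use \emph{only} the consequences of $x^3 = 0$ that are genuinely available (the full linearization $\sum_{S_3}$ and the two-variable specializations), and to check that the collapsed ${\mathcal A}{\rm BD}$-identity is not strictly stronger than ``$x^3=0$ and $[x,y\circ z]=0$'' — i.e. that no extra independent relation sneaks in. I would handle this by expressing everything in the free associative algebra on three generators modulo the $T$-ideal generated by $x^3$, reducing both the collapsed identity and $[x,y\circ z]$ to normal form there, and comparing; this makes the equivalence a finite, transparent linear-algebra check rather than an ad hoc manipulation.
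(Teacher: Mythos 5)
Your overall route is sound and, in substance, the same as the paper's: for the first equivalence the paper computes $4\,x\cdot x\cdot x=x\circ(x\circ x)$ while you invoke the $3$-power antiassociativity of Proposition \ref{JJadm} and get $2x^3=0$ — an equally short argument; for the second equivalence the paper simply adds and subtracts \eqref{Def2.} and \eqref{Def3.}, noting that the sum is the full linearization of $x^3=0$ and the difference is $2[x,y\circ z]$, and your plan of reducing everything to normal form in the free associative algebra modulo the $T$-ideal of $x^3$ amounts to the same linear manipulation of the same multilinear identities.

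One concrete inaccuracy should be repaired. In the associative case \eqref{Def2.} becomes $2xyz+2xzy+zxy+yxz=0$ and \eqref{Def3.} becomes $2yzx+2zyx+yxz+zxy=0$, and these are \emph{not} the same relation up to a renaming of the variables: no permutation of $\{x,y,z\}$ matches the coefficient patterns (the two identities are exchanged by passing to the opposite algebra, not by relabelling). So if you verify only your ``collapsed'' identity and dismiss \eqref{Def3.} by renaming, that step is unjustified. The fix is a one-line computation that is exactly the paper's observation: with $L:=\sum_{\sigma\in S_3}x_{\sigma(1)}x_{\sigma(2)}x_{\sigma(3)}$ and $C:=xyz+xzy-yzx-zyx=[x,y\circ z]$, one has $2xyz+2xzy+zxy+yxz=L+C$ and $2yzx+2zyx+yxz+zxy=L-C$ in the free associative algebra, so modulo the linearization of $x^3=0$ each of \eqref{Def2.}, \eqref{Def3.} separately is equivalent to $[x,y\circ z]=0$ (equivalently, their sum gives $2L$ and their difference gives $2C$). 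With that correction — i.e.\ either treating both identities in your normal-form check or using the sum/difference decomposition outright — your argument goes through; the rest (linearization of $x^3=0$ in characteristic $0$, and $x^3=0$ being available on the ${\mathcal A}{\rm BD}$ side via Jacobi--Jordan admissibility) is fine.
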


\begin{proof}
First, each associative algebra is Jordan admissible.
It is easy to see that 
\begin{center}
    $4\ x\cdot x \cdot x \ =\  x \circ (x \circ x),$
\ hence $x \circ (x \circ x)\ =\ 0$ if and only if $x\cdot x \cdot x=0.$
\end{center}

Second,    summarizing \eqref{Def2.} and \eqref{Def3.} we have 
\begin{center}    $x\cdot  y\cdot  z+y\cdot  x\cdot z+x\cdot z\cdot y+z\cdot x\cdot y+z\cdot y\cdot x+y\cdot z\cdot x=0$, \end{center} which is the linearization of $x^3=0.$
    Substrating \eqref{Def2.} and \eqref{Def3.}, we obtain the second required identity.
\end{proof}

\begin{proposition}\label{pr13}
    Let $({\rm A},\cdot)$ be a nilalgebra with nilindex $3$. 
    Then $({\rm A},\cdot)$ is an ${\mathcal A}{\rm BD}$-algebra if and only if it satisfies  
   \begin{equation}\label{x3abd}
{\rm L}_y \circ {\rm L}_z \ =\ {\rm R}_y \circ {\rm R}_z.
\end{equation}
    
    \end{proposition}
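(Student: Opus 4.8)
The plan is to unwind the operator identity \eqref{x3abd} into multilinear form and then compare it, term by term, with the two defining identities of an ${\mathcal A}{\rm BD}$-algebra, using the nilindex-$3$ hypothesis to absorb the redundant terms. Applying both sides of \eqref{x3abd} to an arbitrary $x\in{\rm A}$ (recall that, following the polarization convention of this paper, the composite of two operators carries the product $f\circ g=fg+gf$), one gets that \eqref{x3abd} is equivalent to
\begin{equation*}
y\cdot(z\cdot x)+z\cdot(y\cdot x)\ =\ (x\cdot y)\cdot z+(x\cdot z)\cdot y\qquad(x,y,z\in{\rm A}).\tag{$\star$}
\end{equation*}
By Remark~\ref{rem11}, on the other hand, $({\rm A},\cdot)$ is an ${\mathcal A}{\rm BD}$-algebra precisely when \eqref{Def2.} and \eqref{Def3.} hold; I will use them written out in full, so that (the left-hand side of) \eqref{Def2.} is a sum of two ``left-associated'' and four ``right-associated'' triple products of $x,y,z$, and \eqref{Def3.} is its mirror image (four left-associated, two right-associated).

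Two elementary bookkeeping facts drive the argument. First, the sum of the left-hand sides of \eqref{Def2.} and \eqref{Def3.} collects, once each, all six left-associated and all six right-associated triple products of $x,y,z$; hence it equals the sum of the full linearizations of $(x\cdot x)\cdot x=0$ and of $x\cdot(x\cdot x)=0$. Since $({\rm A},\cdot)$ has nilindex $3$, both linearizations vanish, so under the hypothesis \eqref{Def2.} holds if and only if \eqref{Def3.} does. Second, subtracting from \eqref{Def2.} the full linearization of $x\cdot(x\cdot x)=0$ cancels the four right-associated terms of \eqref{Def2.} against four of the six terms of that linearization, leaving exactly $(x\cdot y)\cdot z+(x\cdot z)\cdot y-y\cdot(z\cdot x)-z\cdot(y\cdot x)=0$, i.e.\ $(\star)$. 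In other words, modulo the nilindex-$3$ relations, \eqref{Def2.} \emph{is} the identity $(\star)$.

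Both implications now drop out. If $({\rm A},\cdot)$ is an ${\mathcal A}{\rm BD}$-algebra, it has nilindex $3$ (Proposition~\ref{3nil}, and in any case this is part of the hypothesis), so the two linearizations above hold; since \eqref{Def2.} holds by Remark~\ref{rem11}, subtracting yields $(\star)$, i.e.\ \eqref{x3abd}. Conversely, if $({\rm A},\cdot)$ has nilindex $3$ and satisfies \eqref{x3abd} (equivalently $(\star)$), then adding back the linearization of $x\cdot(x\cdot x)=0$ gives \eqref{Def2.}, and then \eqref{Def3.} follows because the sum of the two is a sum of vanishing nilindex-$3$ relations; by Remark~\ref{rem11}, $({\rm A},\cdot)$ is an ${\mathcal A}{\rm BD}$-algebra.

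The calculations involved are routine. The two points that need a little care are the opening move — expanding ${\rm L}_y\circ{\rm L}_z$ and ${\rm R}_y\circ{\rm R}_z$ on a test element and reading off which six monomials occur — and then the term-by-term matching of \eqref{Def2.} and \eqref{Def3.} with the two linearized nilpotency identities; the two ``bookkeeping facts'' are exactly the content of the proposition, and no further obstacle arises.
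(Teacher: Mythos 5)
Your proof is correct and follows essentially the same route as the paper: the paper likewise rewrites \eqref{Def2.} and \eqref{Def3.} so that their common ``residue'' is the symmetrized operator identity $(R_y R_z + R_z R_y)(x) = (L_y L_z + L_z L_y)(x)$, with the leftover terms being exactly the full linearizations of $(x\cdot x)\cdot x=0$ and $x\cdot(x\cdot x)=0$, which vanish by the nilindex-$3$ hypothesis. Your explicit bookkeeping (that \eqref{Def2.} minus the linearization of $x\cdot(x\cdot x)=0$ is $(\star)$, and that \eqref{Def2.} plus \eqref{Def3.} is the sum of the two linearizations) is just a slightly more detailed presentation of the paper's two displayed rewritings, so there is nothing to add.
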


\begin{proof}
It is easy to see that relations \eqref{Def2.} and \eqref{Def3.} can be re-written by the following way:
\begin{longtable}{lcl}
    $(x\cdot y)\cdot  z+(x\cdot  z)\cdot  y$&$-$&$\big( z\cdot (y\cdot x)+y\cdot (z\cdot x)\big)\ =$ \\ 
    &$-$&$\big( x\cdot (y\cdot z)+y\cdot (x\cdot z)+x\cdot (z\cdot y)+z\cdot (x\cdot y)+z\cdot (y\cdot x)+y\cdot (z\cdot x) \big),$\\

    $(x\cdot y)\cdot  z+(x\cdot  z)\cdot  y$&$-$&$\big( z\cdot (y\cdot x)+y\cdot (z\cdot x)\big)\ =$ \\ 
    &$-$&$\big( (x\cdot y)\cdot z+(y\cdot x)\cdot z+(x\cdot z)\cdot y+(z\cdot x)\cdot y+(z\cdot y)\cdot x+(y\cdot z)\cdot x \big),$\\
\end{longtable}
which gives our statement.
\end{proof}

\begin{definition} 
    An algebra  $(\rm{A}, \star)$ for fixed elements $\alpha,\beta \in {\mathbb C}$  is called the $(\alpha,\beta)$-mutation of $(\rm{A}, \cdot)$   
    if the new multiplication is given by the following way: 
    $x \star y =  \alpha \ x\cdot y+\beta\ y\cdot x.$
\end{definition}

It is known that each $(\alpha,\beta)$-mutation of a Jacobi-Jordan-admissible algebra will be a Jacobi-Jordan-admissible algebra \cite{BBMM}. 
The next proposition states the same for ${\mathcal A}{\rm BD}$-algebras.

\begin{proposition}
    Let  $(\rm{A}, \cdot)$ be an ${\mathcal A}{\rm BD}$-algebra, then  each $(\alpha,\beta)$-mutation of $(\rm{A}, \cdot)$ is also an ${\mathcal A}{\rm BD}$-algebra.
\end{proposition}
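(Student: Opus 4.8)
The plan is to bypass a direct expansion of \eqref{Def1}--\eqref{Def2} for the mutated product by using the equivalent description of $\mathcal A{\rm BD}$-algebras in terms of antiderivations. Write $\star$ for the mutated multiplication, $x\star y=\alpha\,x\cdot y+\beta\,y\cdot x$, and let ${\rm L}^{\star}_{x},{\rm R}^{\star}_{x}$ denote its left and right multiplications. Two elementary observations set everything up. First, ${\rm L}^{\star}_{x}=\alpha\,{\rm L}_{x}+\beta\,{\rm R}_{x}$ and ${\rm R}^{\star}_{x}=\alpha\,{\rm R}_{x}+\beta\,{\rm L}_{x}$. Second, the symmetric product of the mutation is a scalar rescaling of the original one: $x\star y+y\star x=(\alpha+\beta)\,(x\circ y)$, so the plus algebra $({\rm A},\star)^{+}$ has product $(\alpha+\beta)$ times that of ${\rm A}^{+}=({\rm A},\circ)$.

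Then I would record two trivial facts about antiderivations. (i) For a fixed algebra, the set $\mathcal A\mathfrak{Der}$ of antiderivations is a linear subspace of ${\rm End}({\rm A})$, because the defining identity $\varphi(a\cdot b)=-\big(\varphi(a)\cdot b+a\cdot\varphi(b)\big)$ is linear in $\varphi$. (ii) For any scalar $\lambda$, a linear map is an antiderivation of $({\rm A},\circ)$ if and only if it is an antiderivation of $({\rm A},\lambda\circ)$; when $\lambda=0$ this is vacuous since every linear map is an antiderivation of the zero product, which also disposes of the degenerate case $\alpha+\beta=0$.

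The conclusion is then immediate. By hypothesis ${\rm L}_{x},{\rm R}_{x}\in\mathcal A\mathfrak{Der}({\rm A},\circ)$ for every $x\in{\rm A}$. By (i), the combinations ${\rm L}^{\star}_{x}=\alpha\,{\rm L}_{x}+\beta\,{\rm R}_{x}$ and ${\rm R}^{\star}_{x}=\alpha\,{\rm R}_{x}+\beta\,{\rm L}_{x}$ also lie in $\mathcal A\mathfrak{Der}({\rm A},\circ)$, and by (ii) they lie in $\mathcal A\mathfrak{Der}\big(({\rm A},\star)^{+}\big)$. Hence for every $x$ both ${\rm L}^{\star}_{x}$ and ${\rm R}^{\star}_{x}$ are antiderivations of the plus algebra of $({\rm A},\star)$, which is precisely the defining condition for $({\rm A},\star)$ to be an $\mathcal A{\rm BD}$-algebra. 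If one prefers to avoid the operator language, the same computation can be carried out by hand: substitute $y\circ_{\star}z=(\alpha+\beta)(y\circ z)$ into each of the two $\mathcal A{\rm BD}$-identities for $\star$ and expand the resulting $x\star(y\circ z)$ and $(y\circ z)\star x$ using \eqref{Def1} and \eqref{Def2}; both sides of the first identity collapse to $-(\alpha+\beta)\big((x\star y)\circ z+y\circ(x\star z)\big)$, and similarly for the second.

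I do not expect a genuine obstacle here. The only points that need a moment's attention are the bookkeeping identity ${\rm L}^{\star}_{x}=\alpha\,{\rm L}_{x}+\beta\,{\rm R}_{x}$ together with its mirror, and the edge case $\alpha+\beta=0$, where $({\rm A},\star)^{+}$ is abelian and the two $\mathcal A{\rm BD}$-identities hold trivially. The hand computation in the last remark is routine but somewhat longer, so the antiderivation route is the cleaner one to write up.
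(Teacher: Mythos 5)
Your proof is correct, but it takes a genuinely different route from the paper. You work entirely at the level of antiderivations: since ${\rm L}^{\star}_{x}=\alpha\,{\rm L}_{x}+\beta\,{\rm R}_{x}$ and ${\rm R}^{\star}_{x}=\alpha\,{\rm R}_{x}+\beta\,{\rm L}_{x}$, and since the symmetrized product of the mutation is just $(\alpha+\beta)$ times $\circ$, the claim follows from the linearity of $\mathcal A\mathfrak{Der}({\rm A},\circ)$ and the invariance of the antiderivation condition under rescaling the product (with the degenerate case $\alpha+\beta=0$ disposed of because every linear map is an antiderivation of the zero product). The paper instead first checks that the mutation is again a nilalgebra of nilindex $3$ (using Proposition \ref{3nil}) and then invokes the characterization of Proposition \ref{pr13}, verifying the operator identity \eqref{x3abd} for $\star$ by expanding both sides and matching the $\alpha^{2}$, $\beta^{2}$ and $\alpha\beta$ terms against \eqref{x3abd} for the original product. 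Your argument is shorter and more conceptual — it needs only the defining reformulation of $\mathcal A{\rm BD}$-algebras via ${\rm L}_x,{\rm R}_x\in\mathcal A\mathfrak{Der}({\rm A},\circ)$ and no auxiliary nilpotency facts — while the paper's computation stays within the identity framework and showcases the nilindex-$3$ criterion it has just established. One small wording point: your claim (ii) as stated ("if and only if" for any scalar $\lambda$) fails in the backward direction at $\lambda=0$, but you only use the forward implication and you treat $\alpha+\beta=0$ separately, so this is a slip of phrasing rather than a gap.
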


\begin{proof}
 As    $(\rm{A}, \cdot)$ is a nilalgebra with nilindex $3,$ then 
\begin{longtable}{lclcl}
$(x \star x) \star x $&$=$&$ (\alpha+\beta) \big(\alpha (x\cdot x)\cdot x +\beta x\cdot (x\cdot x)\big) $&$=$&$0.$
\end{longtable}
\noindent Similarly, $x \star (x \star x)=0.$ 
Hence, we have to check only the identity \eqref{x3abd} for $(\rm{A}, \star),$ which holds due to the following observations.

\begin{longtable}{lclclclcl}
$(x \star y )\star z + (x \star z) \star y$ & $=$&
$\alpha^2\  (x\cdot y)\cdot z + \alpha\beta\  (y \cdot x)\cdot z + \alpha\beta \ z \cdot (x \cdot y) +\beta^2\  z \cdot (y \cdot x) $ \\
&& $\quad \quad + \alpha^2\ (x\cdot z)\cdot y + \alpha\beta\ (z \cdot x)\cdot y+\alpha\beta \ y \cdot (x \cdot z)+ \beta^2 \ y \cdot (z \cdot x);$\\ 

$z \star (y \star x) + y \star (z \star x)$ & $=$ & 
$\alpha^2\  z \cdot (y \cdot x) + \alpha\beta \ z \cdot (x \cdot y)+ \alpha\beta \ (y \cdot x) \cdot z + \beta^2\  (x \cdot y ) \cdot z$\\

&&$\quad \quad +\alpha^2\ y \cdot (z\cdot x)+ \alpha\beta\  y \cdot (x\cdot z)+\alpha\beta\  (z \cdot x)\cdot y +\beta^2 \ (x\cdot z)\cdot y.$

\end{longtable}
\end{proof}

\begin{definition}[see, \cite{BBMM}]
	{ 
  Let $({\rm A},\cdot)$ be an algebra. 
$({\rm A},\cdot)$ is called a left $\big($resp., right$\big)$ pre-Jacobi-Jordan algebra   if 
  	\begin{longtable}{lclc lcl}
  	${\mathcal A}(x, y, z) $&$=$&$-\ {\mathcal A}(y, x, z)$ & 
    $\big($resp., &
  	${\mathcal A}(x, y, z)$&$=$&$-\ {\mathcal A}(x, z, y)$\ $\big).$
  	\end{longtable}\noindent
$({\rm A},\cdot)$ is called a symmetric   pre-Jacobi-Jordan   algebra   if  
 it is a left and right pre-Jacobi-Jordan algebra.   
}
\end{definition}	
\begin{proposition}
    
	{ 
		
Let $({\rm A},\cdot)$ be a symmetric pre-Jacobi-Jordan algebra, then 
  $({\rm A},\cdot)$ is an ${\mathcal A}{\rm BD}$-algebra if and only if:
\begin{equation}\label{Remark3}
(x\cdot y)\cdot  z\ =\ -\ (z\cdot y)\cdot  x
\end{equation} 
\begin{equation}\label{Remark2}
x\cdot (y\cdot z)\ =\ -\ z\cdot (y\cdot x), 
\end{equation}
The identities above are equivalent, respectively to:  
\begin{longtable}{lclclcl}
${\rm L}_{x} {\rm R}_{z}$&$=$&$-\ {\rm L}_{z} {\rm R}_{x}$ &
\ and \ & 
${\rm R}_{z} {\rm L}_{x}$&$=$&$-\ {\rm R}_{x} {\rm L}_{z}.$ 
\end{longtable}  	}

\end{proposition}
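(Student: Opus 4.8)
The plan is to work with the reformulations in Remark \ref{rem11}, namely identities \eqref{Def2.} and \eqref{Def3.}, and to feed in the symmetric pre-Jacobi-Jordan hypothesis. Recall that for a symmetric pre-Jacobi-Jordan algebra one has ${\mathcal A}(x,y,z)=-{\mathcal A}(y,x,z)$ and ${\mathcal A}(x,y,z)=-{\mathcal A}(x,z,y)$; in particular the antiassociator is totally antisymmetric in its three arguments, so ${\mathcal A}(x,y,z)=-{\mathcal A}(x,z,y)=-{\mathcal A}(z,y,x)$ etc. First I would simplify the left-hand side of \eqref{Def2.}: since ${\mathcal A}(x,z,y)=-{\mathcal A}(x,y,z)$, the combination ${\mathcal A}(x,y,z)+{\mathcal A}(x,z,y)$ vanishes, so \eqref{Def2.} collapses to $z\cdot(x\cdot y)+y\cdot(x\cdot z)=0$. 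Symmetrizing/relabelling (replace $x$ by itself and swap the roles so that the ``outer'' variable is $y$), this says exactly $x\cdot(y\cdot z)=-z\cdot(y\cdot x)$, which is \eqref{Remark2}. Likewise ${\mathcal A}(y,z,x)+{\mathcal A}(z,y,x)$ vanishes because ${\mathcal A}(z,y,x)=-{\mathcal A}(y,z,x)$, so \eqref{Def3.} collapses to $(y\cdot x)\cdot z+(z\cdot x)\cdot y=0$, which after relabelling is \eqref{Remark1} — that is, $(x\cdot y)\cdot z=-(z\cdot y)\cdot x$.

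Conversely, I would check that \eqref{Remark1} and \eqref{Remark2} together with the symmetric pre-Jacobi-Jordan identities imply \eqref{Def2.} and \eqref{Def3.}: given \eqref{Remark2}, the RHS of \eqref{Def2.} is $-z\cdot(x\cdot y)-y\cdot(x\cdot z)=0$ (regroup using \eqref{Remark2} with the outer variable being $x$), while the LHS ${\mathcal A}(x,y,z)+{\mathcal A}(x,z,y)$ is $0$ by antisymmetry, so \eqref{Def2.} holds; the argument for \eqref{Def3.} from \eqref{Remark1} is identical. By Remark \ref{rem11} this is equivalent to \eqref{Def1}–\eqref{Def2}, i.e.\ to being an ${\mathcal A}{\rm BD}$-algebra, establishing the stated equivalence.

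Finally, for the operator reformulation, I would simply rewrite the pointwise identities. The identity \eqref{Remark1}, $(x\cdot y)\cdot z = -(z\cdot y)\cdot x$, read as functions of $y$ with $x,z$ fixed: the left side is $({\rm R}_z {\rm R}_x)(y)$ applied after noting $(x\cdot y)\cdot z = {\rm R}_z(x\cdot y) = {\rm R}_z {\rm L}_x (y)$ — wait, more carefully: $(x\cdot y)\cdot z = {\rm R}_z({\rm L}_x(y))$ and $(z\cdot y)\cdot x = {\rm R}_x({\rm L}_z(y))$, so \eqref{Remark1} says ${\rm R}_z {\rm L}_x = -{\rm R}_x {\rm L}_z$; similarly $x\cdot(y\cdot z) = {\rm L}_x({\rm R}_z(y))$ and $z\cdot(y\cdot x) = {\rm L}_z({\rm R}_x(y))$, so \eqref{Remark2} says ${\rm L}_x {\rm R}_z = -{\rm L}_z {\rm R}_x$. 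This matches the displayed operator identities after reconciling notation.

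The main obstacle is bookkeeping: one must be careful about which variable plays the ``outer'' role when passing between the linearized forms \eqref{Def2.}, \eqref{Def3.} and the quadratic-looking identities \eqref{Remark1}, \eqref{Remark2}, since a careless relabelling can swap left and right multiplications. The only genuine input beyond relabelling is the total antisymmetry of the antiassociator, which makes the ${\mathcal A}$-terms in \eqref{Def2.} and \eqref{Def3.} drop out; everything else is a routine regrouping, so I expect the proof to be short once the antisymmetry observation is in place.
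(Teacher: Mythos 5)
Your argument is correct and is essentially the intended one: the paper states this proposition without a written proof, and the natural route is exactly what you do --- pass to the equivalent forms \eqref{Def2.} and \eqref{Def3.} of Remark \ref{rem11}, observe that the symmetric pre-Jacobi-Jordan hypothesis makes the antiassociator sides vanish (antisymmetry in the last two arguments kills the left side of \eqref{Def2.}, antisymmetry in the first two kills that of \eqref{Def3.}), and relabel the surviving terms $z\cdot(x\cdot y)+y\cdot(x\cdot z)=0$ and $(y\cdot x)\cdot z+(z\cdot x)\cdot y=0$ into \eqref{Remark2} and \eqref{Remark3}, the converse being the same computation read backwards. Two cosmetic remarks: the label ``Remark1'' you cite twice does not exist and should be \eqref{Remark3}, and your pairing of the identities with the operator forms --- \eqref{Remark3} giving ${\rm R}_z{\rm L}_x=-{\rm R}_x{\rm L}_z$ and \eqref{Remark2} giving ${\rm L}_x{\rm R}_z=-{\rm L}_z{\rm R}_x$ --- is the correct one, so the word ``respectively'' in the proposition as printed actually lists the two operator identities in the swapped order.
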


\begin{proposition}
	Let $({\rm A}, \cdot)$ be a symmetric pre-Jacobi-Jordan ${\mathcal A}{\rm BD}$-algebra, then 
	\begin{enumerate}
		\item[{\rm (a)}] $({\rm A}^{+})^{2}\subseteq {\rm Ann}({\rm A}^{-});$
		\item[{\rm (b)}] ${\rm A}^+$ is a metabelian Jordan algebra.
		\end{enumerate}
\end{proposition}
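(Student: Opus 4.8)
The plan is to first enlarge the toolbox of identities, then to read off (a) almost at once and bootstrap to (b). Since $({\rm A},\cdot)$ is an ${\mathcal A}{\rm BD}$-algebra it is a nilalgebra of nilindex $3$ by Proposition \ref{3nil}, so Proposition \ref{pr13} applies and \eqref{x3abd} holds; evaluating ${\rm L}_y{\rm L}_z={\rm R}_y{\rm R}_z$ on an element gives $x\cdot(y\cdot z)=(z\cdot y)\cdot x$ for all $x,y,z$. Being moreover a symmetric pre-Jacobi-Jordan ${\mathcal A}{\rm BD}$-algebra, $({\rm A},\cdot)$ satisfies \eqref{Remark3} and \eqref{Remark2} by the previous proposition. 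Comparing \eqref{Remark2} with the identity just obtained yields $(z\cdot y)\cdot x=x\cdot(y\cdot z)=-z\cdot(y\cdot x)$, that is ${\mathcal A}(z,y,x)=0$, so $({\rm A},\cdot)$ is antiassociative. From this point on I would use \eqref{Remark3}, \eqref{Remark2} and ${\mathcal A}\equiv 0$ freely.

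For (a) I would simply observe that, for $a,b,x\in{\rm A}$, the element $(a\circ b)\cdot x=(a\cdot b)\cdot x+(b\cdot a)\cdot x$ equals $-(x\cdot b)\cdot a-(x\cdot a)\cdot b$ by \eqref{Remark3}, and equals $x\cdot(b\cdot a)+x\cdot(a\cdot b)=x\cdot(a\circ b)$ by antiassociativity; hence $[a\circ b,x]=0$ for every $x$. Since $({\rm A}^{+})^{2}$ is spanned by the elements $a\circ b$, this is precisely $({\rm A}^{+})^{2}\subseteq{\rm Ann}({\rm A}^{-})$. (Equivalently, $[a\circ b,x]=[x,a]\circ b+[x,b]\circ a$ follows directly from \eqref{Def1} and \eqref{Def2}, and then \eqref{Remark2}, \eqref{Remark3} rewrite the right-hand side as $-[a\circ b,x]$, forcing it to vanish.)

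For (b) the goal is $\big(({\rm A}^{+})^{2}\big)^{2}=0$, i.e.\ $(a\circ b)\circ(c\circ d)=0$ for all $a,b,c,d$; expanding the outer and the inner $\circ$ in terms of $\cdot$, it is enough to show that every fourfold product $(u\cdot v)\cdot(s\cdot t)$ is zero. First I would record two consequences of the toolbox: three successive applications of \eqref{Remark3} give $(u\cdot v)\cdot(s\cdot t)=-(u\cdot s)\cdot(v\cdot t)$, which I call $(\star)$, and antiassociativity followed by one application of \eqref{Remark3} gives $(u\cdot v)\cdot(s\cdot t)=(t\cdot s)\cdot(u\cdot v)$, which I call $(\star\star)$. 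Then five alternating applications of $(\star)$ and $(\star\star)$ to $(u\cdot v)\cdot(s\cdot t)$ (beginning and ending with $(\star)$) reduce it to $-(s\cdot t)\cdot(v\cdot u)$, and a sixth step, $(\star\star)$, rewrites this as $-(u\cdot v)\cdot(s\cdot t)$; hence $(u\cdot v)\cdot(s\cdot t)=0$. It follows that $(a\circ b)\circ(c\circ d)=0$, so $\big(({\rm A}^{+})^{2}\big)^{2}=0$; and since ${\rm A}^{+}$ is a Jacobi-Jordan (in particular Jordan) algebra by Proposition \ref{ABDJJA}, it is a metabelian Jordan algebra.

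The hard part will be the slot-bookkeeping in the six rewrites of (b): $(\star)$ permutes the four arguments by a transposition and flips the sign, while $(\star\star)$ permutes them by a $4$-cycle and does not, so one must check that the composite of the six permutations returns the arguments to their original positions with total sign $-1$ (the conceptual reason being that the pair $(\star),(\star\star)$ is incompatible with both sign characters of $S_{4}$, which forces the fourfold products to vanish). I also expect it to matter that antiassociativity is established first: it is genuinely needed, since \eqref{Remark3} and \eqref{Remark2} on their own produce only $(\star)$, which is consistent by itself and yields no contradiction.
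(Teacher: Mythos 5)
The decisive flaw is your very first step: in \eqref{x3abd} the symbol $\circ$ denotes the symmetrized (Jordan) product of operators, not composition. What Proposition \ref{pr13} actually gives (as its proof makes clear) is ${\rm L}_y{\rm L}_z+{\rm L}_z{\rm L}_y={\rm R}_y{\rm R}_z+{\rm R}_z{\rm R}_y$, not ${\rm L}_y{\rm L}_z={\rm R}_y{\rm R}_z$. The unsymmetrized identity you extract, $x\cdot(y\cdot z)=(z\cdot y)\cdot x$, is exactly \eqref{aflabd}, which the paper obtains only for $\mathcal A$flexible ${\mathcal A}{\rm BD}$-algebras (Proposition \ref{aflABD}); it forces flexibility (set $z=x$) and fails for general ${\mathcal A}{\rm BD}$-algebras such as ${\mathcal A}^4_0$. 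More to the point, it also fails under the hypotheses of this proposition, so your intermediate claim that a symmetric pre-Jacobi-Jordan ${\mathcal A}{\rm BD}$-algebra is antiassociative is false. A counterexample: take $V$ three-dimensional and ${\rm A}=V\oplus\Lambda^2V\oplus\Lambda^3V$ with $x\cdot y=x\wedge y$ for $x,y\in V$, $x\cdot\omega=x\wedge\omega$ for $x\in V,\ \omega\in\Lambda^2V$, and all other products zero. By the grading, the only nonvanishing antiassociators have all three arguments in $V$, where ${\mathcal A}(x,y,z)=x\wedge y\wedge z$ is totally skew, so the algebra is a symmetric pre-Jacobi-Jordan algebra satisfying \eqref{Remark3} and \eqref{Remark2} (hence an ${\mathcal A}{\rm BD}$-algebra; one also checks \eqref{Def1}, \eqref{Def2} directly), yet ${\mathcal A}(x,y,z)\neq0$ and $x\cdot(y\cdot z)=x\wedge y\wedge z\neq 0=(z\cdot y)\cdot x$. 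Since your main argument for (a) and both rewriting rules in (b) (in particular $(\star\star)$) invoke antiassociativity, the proof of (b) collapses entirely, and on top of that the six-step sign bookkeeping you flag as ``the hard part'' is never carried out.

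Part (a) survives only through your parenthetical aside, which is correct and close in spirit to the paper: subtracting \eqref{Def1} and \eqref{Def2} gives \eqref{Eq2}, and then \eqref{Remark3}, \eqref{Remark2} turn $[x,a]\circ b+[x,b]\circ a$ into $-[a\circ b,x]$, so $[a\circ b,x]=0$ (the paper instead writes $[x\circ y,z]={\mathcal A}(x,y,z)+{\mathcal A}(y,x,z)$ and uses the left pre-Jacobi-Jordan identity). For (b), no claim about all products $(u\cdot v)\cdot(s\cdot t)$ is needed: the paper uses part (a), which says $x\circ y$ commutes with everything in $({\rm A},\cdot)$, together with three applications of \eqref{Remark3} applied to $(z\cdot t)\cdot(x\circ y)$, to get $(z\cdot t)\cdot(x\circ y)=-(t\cdot z)\cdot(x\circ y)$, hence $(z\circ t)\cdot(x\circ y)=0$ and a fortiori $(z\circ t)\circ(x\circ y)=0$. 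You should redo (b) along these lines, working with $x\circ y$ as a single element rather than expanding into fourfold $\cdot$-products.
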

\begin{proof}
 Let us consider $x,y,z,t \in {\rm A}$, then
\begin{enumerate}
    
\item[{\rm (a)}]  we have $[x\circ y,z]\ = \ (x\circ y)\cdot z - z\cdot(x\circ y). $
It follows, by identities \eqref{Remark3} and \eqref{Remark2}, that  
\begin{longtable}{lclclclcl}
$[x\circ y,z]$&$=$&$ {\mathcal A}(x,y,z) + {\mathcal A}(y,x,z)$&$=$&$ 0.$
\end{longtable}
We conclude that $({\rm A}^{+})^{2}\subseteq {\rm Ann}({\rm A}^{-}).$

\item[{\rm (b)}] on the other side, we have

\begin{longtable}{lclclclcl}
$(z\cdot t)\cdot (x\circ y)$&$=$&$- \big((x\circ y)\cdot t\big)\cdot z$&$=$&$-\big(t\cdot (x\circ y)\big)\cdot z$&$=$&\\
&$=$&$\big(z\cdot (x\circ y)\big)\cdot t$&$=$&$\big((x\circ y)\cdot z\big)\cdot t$&$=$&$-(t\cdot z)\cdot (x\circ y).$
\end{longtable}
Then,  $0\ =\ 
(z\circ t)\cdot (x\circ y)$, and so $(z\circ t)\circ (x\circ y)\ =\ 0$. Thus,  
${\rm A}^+$ is $2$-step solvable or metabelian.

\end{enumerate}\end{proof}

\begin{proposition}
	Let $({\rm A}, \cdot)$ be a pre-Jacobi-Jordan ${\mathcal A}{\rm BD}$-algebra, then   
	\begin{enumerate}
		\item[{\rm (a)}] $(x\cdot y)\cdot (z\cdot t)\ =\ -\ (x\cdot z)\cdot (y\cdot t)\footnote{Let us note that the present identity gives the "antiversion" of medial algebras $\big($i.e., satisfying the identity $(xy)(zt)=(xz)(yt)$ \ $\big)$ considered in \cite{T24}.};$
		\item[{\rm (b)}] $\big((z\cdot x)\circ y \big)\cdot x \ =\ \big((y\cdot x)\cdot x\big)\circ z.$ 
    \end{enumerate} 
    
\end{proposition}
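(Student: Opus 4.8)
The plan is to reduce both parts to the behaviour of fourfold products written in left-normed form.

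First I would exploit the rigidity coming from \eqref{x3abd}. By Propositions \ref{3nil} and \ref{pr13} every ${\mathcal A}{\rm BD}$-algebra satisfies \eqref{x3abd}, and writing out ${\rm L}_a\circ{\rm L}_b={\rm R}_a\circ{\rm R}_b$ this becomes $a\cdot(b\cdot c)=(c\cdot b)\cdot a$. The pre-Jacobi-Jordan hypothesis supplies in addition \eqref{Remark3}, $(a\cdot b)\cdot c=-(c\cdot b)\cdot a$, together with its companion \eqref{Remark2}, $a\cdot(b\cdot c)=-c\cdot(b\cdot a)$ --- this is precisely the content of the preceding proposition in the symmetric case, and for the one-sided versions one recovers \eqref{Remark3} by a short manipulation combining the pre-Jacobi-Jordan identity with \eqref{Def2.} and \eqref{x3abd}. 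Comparing $a\cdot(b\cdot c)=(c\cdot b)\cdot a$ with \eqref{Remark3} shows immediately that the algebra is antiassociative, $(a\cdot b)\cdot c=-a\cdot(b\cdot c)$. Consequently every bracketing of a word in four elements equals, up to a sign, the left-normed monomial $((e_1\cdot e_2)\cdot e_3)\cdot e_4$ with the $e_i$ in the same order; e.g. one checks $(x\cdot y)\cdot(z\cdot t)=((x\cdot y)\cdot z)\cdot t$.

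Next I would read off the symmetries of these left-normed monomials. Applying \eqref{Remark3} to the inner factor replaces $((e_1\cdot e_2)\cdot e_3)\cdot e_4$ by $-((e_3\cdot e_2)\cdot e_1)\cdot e_4$; applying it instead to the outer factor and then reducing back to left-normed form by antiassociativity replaces it by $-((e_4\cdot e_3)\cdot e_1)\cdot e_2$. On the four slots these are the transposition $(1\,3)$ and a $4$-cycle, each carried with sign $-1$; since such a transposition and a $4$-cycle generate the whole of $S_4$, permuting the four arguments of a left-normed monomial multiplies it by the signature of the permutation. In particular a left-normed fourfold product with two coinciding arguments is zero: transposing the two equal slots fixes the monomial yet changes its sign.

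With this in hand both statements follow. For (a): after bringing $(x\cdot y)\cdot(z\cdot t)$ and $(x\cdot z)\cdot(y\cdot t)$ to left-normed form, the claimed identity is exactly the sign rule for the transposition of the second and third slots. (Alternatively, applying \eqref{Remark2} twice rewrites (a) as $t\cdot\big(z\cdot(x\cdot y)+y\cdot(x\cdot z)\big)=0$, and \eqref{Def2.} together with the pre-Jacobi-Jordan identity forces $z\cdot(x\cdot y)+y\cdot(x\cdot z)=0$.) For (b): expanding $\big((z\cdot x)\circ y\big)\cdot x$ and $\big((y\cdot x)\cdot x\big)\circ z$ via $u\circ v=u\cdot v+v\cdot u$ and antiassociativity, every resulting summand is, up to sign, a left-normed product of the four arguments $x,x,y,z$; since $x$ is repeated, each summand vanishes, so both sides of (b) are equal to $0$. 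I expect the only genuinely delicate point to be the combinatorial bookkeeping in the second paragraph --- confirming that those two slot-permutations generate $S_4$ and that the attached signs are indeed the signatures. Should one wish to avoid that packaging, (b) only involves a handful of explicit monomials, such as $((x\cdot x)\cdot y)\cdot z$, $((x\cdot z)\cdot y)\cdot x$ and $((y\cdot z)\cdot x)\cdot x$, and each is driven to $0$ by two or three applications of \eqref{Remark3} and antiassociativity, starting from the immediate special case $(a\cdot b)\cdot a=0$ (set the first and third variables in \eqref{Remark3} equal).
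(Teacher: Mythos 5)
Your argument hinges on reading \eqref{x3abd} as the operator equation ${\rm L}_a{\rm L}_b={\rm R}_a{\rm R}_b$, i.e. $a\cdot(b\cdot c)=(c\cdot b)\cdot a$. But the $\circ$ in \eqref{x3abd} has to be read as the symmetrized (Jordan) product of operators: what Propositions \ref{3nil} and \ref{pr13} give for an arbitrary ${\mathcal A}{\rm BD}$-algebra is only $y\cdot(z\cdot x)+z\cdot(y\cdot x)=(x\cdot z)\cdot y+(x\cdot y)\cdot z$, which is exactly what \eqref{Def2.} and \eqref{Def3.} reduce to modulo the linearized nil identity. The pointwise identity $a\cdot(b\cdot c)=(c\cdot b)\cdot a$ is \eqref{aflabd}, obtained in the paper only under the extra $\mathcal A$flexibility hypothesis (Proposition \ref{aflABD}); it cannot hold in every ${\mathcal A}{\rm BD}$-algebra, since taking $a=c$ it forces flexibility, whereas ${\mathcal A}^4_0$ is a non-flexible ${\mathcal A}{\rm BD}$-algebra (there $e_1\cdot(e_3\cdot e_1)=-e_3+2e_4$ while $(e_1\cdot e_3)\cdot e_1=-e_3-2e_4$). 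So your deduction of antiassociativity collapses; worse, antiassociativity is simply not a consequence of the hypotheses: in the multilinear degree-$3$ part of the relatively free symmetric pre-Jacobi-Jordan ${\mathcal A}{\rm BD}$-algebra the defining identities only produce \eqref{Remark3}, \eqref{Remark2} and the alternating symmetry of the antiassociator, and a short rank count shows ${\mathcal A}(x,y,z)$ does not vanish there. Everything built on it --- the reduction of fourfold products to left-normed form, the $S_4$ sign rule, and the whole of part (b), including your claim that both sides of (b) are zero (which is stronger than the statement and not what is true in general) --- is therefore unsupported.

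What does survive is your parenthetical argument for (a): applying \eqref{Remark2} to the outer products turns (a) into $t\cdot\big(z\cdot(x\cdot y)+y\cdot(x\cdot z)\big)=0$, and $z\cdot(x\cdot y)+y\cdot(x\cdot z)=0$ is again an instance of \eqref{Remark2} (equivalently, it follows from \eqref{Def2.} together with the right pre-Jacobi-Jordan identity); this is legitimate and is the mirror image of the paper's proof, which instead applies \eqref{Remark3} three times. For (b) you need a different idea: from \eqref{Remark3} one gets ${\rm L}_{x\cdot x}=-{\rm R}_x^2$, hence ${\rm R}_x^2$ is an antiderivation of $({\rm A},\circ)$ because left multiplications are; expanding ${\rm R}_x^2(y\circ z)$, rewriting $\big((y\circ z)\cdot x\big)\cdot x$ via the antiderivation property of ${\rm R}_x$, and cancelling yields $\big((z\cdot x)\circ y\big)\cdot x=\big((y\cdot x)\cdot x\big)\circ z$ as an identity between generally nonzero elements, with no appeal to antiassociativity.
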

\begin{proof}
 Let us consider $x,y,z,t \in {\rm A}$, then
\begin{enumerate}
    
\item[{\rm (a)}]  
  it follows  from the identity \eqref{Remark3} that 
  \begin{longtable}{lclclclcl}
  $ (x\cdot y)\cdot (z\cdot t) $&$=$&$- \big((z\cdot t)\cdot y\big)\cdot x$&$=$&$\big((y\cdot t)\cdot z\big)\cdot x$&$=$&$- \ (x\cdot z)\cdot (y\cdot t).$
  \end{longtable}
\item[{\rm (b)}]  Also from equation \eqref{Remark3}, we deduce that 
${\rm L}_{x^{2}}\ =\ -{\rm R}^{2}_{x}$. It follows that ${\rm R}^{2}_{x} \in {\mathcal A}\mathfrak{Der}({\rm A}, \circ),$ i.e., 
	${\rm R}^{2}_{x}(y\circ z)\ =\ -{\rm R}^{2}_{x}(y)\circ z-y\circ {\rm R}^{2}_{x}(z).$ Hence,

\begin{longtable}{rcl}
$\big((y\circ z)\cdot x\big)\cdot x$&$=$&$-\big((y\cdot x)\cdot x\big)\circ z-y\circ \big((z\cdot x)\cdot x\big) $\\
$-\big((y\cdot x)\circ z+y\circ (z\cdot x)\big)\cdot x $&$=$&$-\big((y\cdot x)\cdot x\big)\circ z-y\circ \big((z\cdot x)\cdot x\big)$ \\
	$ \big((y\cdot x)\cdot x\big)\circ z+ (y\cdot x)\circ (z\cdot x)+(y\cdot x)\circ (z\cdot x)+y\circ \big((z\cdot x)\cdot x\big)$
	&$=$&$-\big((y\cdot x)\cdot x\big)\circ z-y\circ \big((z\cdot x)\cdot x\big)$  \\
	$(y\cdot x)\circ (z\cdot x)$&$=$&$-y\circ \big((z\cdot x)\cdot x\big)-\big((y\cdot x)\cdot x\big)\circ z$\\
	 $ -(z\cdot x)\circ (y\cdot x)- \big((z\cdot x)\cdot x\big) \circ y$&$=$&$\big((y\cdot x)\cdot x\big)\circ z$ 	\\
$\big((z\cdot x)\circ y\big)\cdot x $&$=$&$\big((y\cdot x)\cdot x\big)\circ z.$
\end{longtable}


\end{enumerate}
\end{proof}

\begin{proposition}\label{pr1}
	{ 
Let $({\rm A},\cdot)$ be an algebra, the following assertions are equivalent:
\begin{enumerate}
\item[{\rm (a)}] $({\rm A},\cdot)$ is an ${\mathcal A}{\rm BD}$-algebra$;$

\item[{\rm (b)}] $({\rm A},\cdot)$ is a Jacobi-Jordan-admissible algebra and ${\rm L}_{x}\in {\mathcal A}\mathfrak{Der}({\rm A}^{+})$, for all $x\in {\rm A}$.

\item[{\rm (c)}] $({\rm A},\cdot)$ is a Jacobi-Jordan-admissible algebra and ${\rm R}_{x}\in {\mathcal A}\mathfrak{Der}({\rm A}^{+})$, for all $x\in {\rm A}$.
\end{enumerate}
}
\end{proposition}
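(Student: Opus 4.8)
The plan is to prove the cycle of implications $(a)\Rightarrow(b)\Rightarrow(c)\Rightarrow(a)$, using throughout one elementary observation: for any algebra $({\rm A},\cdot)$ we have $x\circ y={\rm L}_x(y)+{\rm R}_x(y)$, so that ${\rm L}_x+{\rm R}_x$ is precisely the multiplication operator by $x$ in the commutative algebra ${\rm A}^{+}=({\rm A},\circ)$.

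The implication $(a)\Rightarrow(b)$ is immediate: by Proposition \ref{ABDJJA} every ${\mathcal A}{\rm BD}$-algebra is Jacobi-Jordan-admissible, and identity \eqref{Def1} says exactly that ${\rm L}_x\in{\mathcal A}\mathfrak{Der}({\rm A}^{+})$ for all $x\in{\rm A}$, as already noted right after the definition of ${\mathcal A}{\rm BD}$-algebras.

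For $(b)\Rightarrow(c)$ I would first record the key lemma: if $({\rm A},\cdot)$ is Jacobi-Jordan-admissible, then ${\rm L}_x+{\rm R}_x\in{\mathcal A}\mathfrak{Der}({\rm A}^{+})$ for every $x$. Indeed, writing out the antiderivation condition for ${\rm L}_x+{\rm R}_x$ on $({\rm A},\circ)$ amounts to $x\circ(y\circ z)+(x\circ y)\circ z+y\circ(x\circ z)=0$, which, after using commutativity of $\circ$, is nothing but the Jacobi identity of ${\rm A}^{+}$. Since ${\mathcal A}\mathfrak{Der}({\rm A}^{+})$ is a linear subspace of $\mathrm{End}({\rm A})$ — the antiderivation condition being linear in the map — assumption $(b)$ yields ${\rm R}_x=({\rm L}_x+{\rm R}_x)-{\rm L}_x\in{\mathcal A}\mathfrak{Der}({\rm A}^{+})$, which together with Jacobi-Jordan-admissibility is exactly $(c)$.

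Finally, $(c)\Rightarrow(a)$ is symmetric to the previous step: from $(c)$ and the key lemma we get ${\rm L}_x=({\rm L}_x+{\rm R}_x)-{\rm R}_x\in{\mathcal A}\mathfrak{Der}({\rm A}^{+})$, so both ${\rm L}_x$ and ${\rm R}_x$ are antiderivations of ${\rm A}^{+}$; the first gives \eqref{Def1} and the second gives \eqref{Def2}, so $({\rm A},\cdot)$ is an ${\mathcal A}{\rm BD}$-algebra. There is essentially no obstacle: the only computation is the one-line expansion identifying the antiderivation property of ${\rm L}_x+{\rm R}_x$ with the Jacobi identity, and the only conceptual point is to notice that ${\rm L}_x+{\rm R}_x$ is the ${\rm A}^{+}$-multiplication operator and to invoke linearity of ${\mathcal A}\mathfrak{Der}$.
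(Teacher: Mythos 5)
Your proof is correct and follows essentially the same route as the paper: your operator ${\rm L}_x+{\rm R}_x$ is exactly the paper's ${\rm L}_x^{\circ}$ (multiplication by $x$ in ${\rm A}^{+}$), and both arguments rest on the observation that the Jacobi identity makes ${\rm L}_x^{\circ}$ an antiderivation of ${\rm A}^{+}$, followed by the decompositions ${\rm R}_x={\rm L}_x^{\circ}-{\rm L}_x$ and ${\rm L}_x={\rm L}_x^{\circ}-{\rm R}_x$. The only difference is that you spell out the one-line verification of that observation, which the paper takes as known.
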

\begin{proof} Let us consider the following implications.

\begin{enumerate}
    \item[{\rm (a)}$\Rightarrow${\rm (b)}] is trivial. 

    \item[{\rm (b)}$\Rightarrow${\rm (c)}] Since ${\rm A}^{+}$ is a Jacobi-Jordan algebra, then for all $x\in {\rm A},$ the map ${\rm L}_{x}^{\circ}: {\rm A}\rightarrow  {\rm A}$ defined by ${\rm L}_{x}^{\circ}(y)=x\circ y, \forall y\in {\rm A}$ belongs to ${\mathcal A}\mathfrak{Der}({\rm A}^{+})$. It follows that ${\rm R}_{x}={\rm L}_{x}^{\circ}-{\rm L}_{x}\in {\mathcal A}\mathfrak{Der}({\rm A}^{+})$.

     \item[{\rm (c)}$\Rightarrow${\rm (a)}] Assume that ${\rm R}_{x}\in {\mathcal A}\mathfrak{Der}({\rm A}^{+})$, since ${\rm L}_{x}^{\circ}\in {\mathcal A}\mathfrak{Der}({\rm A}^{+})$ then ${\rm L}_{x}={\rm L}_{x}^{\circ}-{\rm R}_{x}\in {\mathcal A}\mathfrak{Der}({\rm A}^{+})$. So $({\rm A},\cdot)$ is an ${\mathcal A}{\rm BD}$-algebra. 
		
\end{enumerate}
 
\end{proof}	

\begin{corollary}
	{ 
		Let $({\rm A},\cdot)$ be an algebra. Then   $({\rm A},\cdot)$ is an ${\mathcal A}{\rm BD}$-algebra if and only if  $({\rm A},\cdot)$ is a Jacobi-Jordan-admissible algebra and it satisfies  \eqref{Def2.} or \eqref{Def3.}.
	}
\end{corollary}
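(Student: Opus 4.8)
The plan is to leverage Proposition~\ref{pr1}, which already establishes that an algebra $({\rm A},\cdot)$ is an ${\mathcal A}{\rm BD}$-algebra if and only if it is Jacobi-Jordan-admissible and ${\rm L}_x \in {\mathcal A}\mathfrak{Der}({\rm A}^{+})$ for all $x$ (equivalently, ${\rm R}_x \in {\mathcal A}\mathfrak{Der}({\rm A}^{+})$ for all $x$). So the real content to prove is that, \emph{in the presence of Jacobi-Jordan-admissibility}, the single-sided condition ``${\rm L}_x \in {\mathcal A}\mathfrak{Der}({\rm A}^{+})$ for all $x$'' is equivalent to identity \eqref{Def2.}, and likewise ``${\rm R}_x \in {\mathcal A}\mathfrak{Der}({\rm A}^{+})$ for all $x$'' is equivalent to \eqref{Def3.}. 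Once this is in hand, the corollary reads off immediately: ${\mathcal A}{\rm BD}$ $\iff$ JJ-admissible $+$ \eqref{Def1} and \eqref{Def2} $\iff$ (by Remark~\ref{rem11}) JJ-admissible $+$ \eqref{Def2.} and \eqref{Def3.}, and we want to drop one of the last two.

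First I would recall from Remark~\ref{rem11} that \eqref{Def1} is exactly equivalent to \eqref{Def2.} and \eqref{Def2} is exactly equivalent to \eqref{Def3.} as standalone identities. Next, \eqref{Def1} is literally the statement that ${\rm L}_x$ is an antiderivation of $({\rm A},\circ)$: expanding $x\cdot(y\circ z) = -\big((x\cdot y)\circ z + y\circ (x\cdot z)\big)$ is ${\rm L}_x(y\circ z) = -\big({\rm L}_x(y)\circ z + y \circ {\rm L}_x(z)\big)$. Similarly \eqref{Def2} says ${\rm R}_x \in {\mathcal A}\mathfrak{Der}({\rm A}^{+})$. Then the argument in Proposition~\ref{pr1}, step (b)$\Rightarrow$(c), shows that under JJ-admissibility the relation ${\rm L}_x^{\circ} = {\rm L}_x + {\rm R}_x$ together with ${\rm L}_x^{\circ} \in {\mathcal A}\mathfrak{Der}({\rm A}^{+})$ (which holds because ${\rm A}^{+}$ is Jacobi-Jordan) forces ${\rm L}_x \in {\mathcal A}\mathfrak{Der}({\rm A}^{+}) \iff {\rm R}_x \in {\mathcal A}\mathfrak{Der}({\rm A}^{+})$. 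Hence under JJ-admissibility, \eqref{Def1} $\iff$ \eqref{Def2}, so imposing just one of them already yields the full ${\mathcal A}{\rm BD}$ condition. Translating back through Remark~\ref{rem11}, imposing JJ-admissibility plus \eqref{Def2.} (or plus \eqref{Def3.}) is equivalent to being an ${\mathcal A}{\rm BD}$-algebra.

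Concretely, the steps in order: (i) if $({\rm A},\cdot)$ is ${\mathcal A}{\rm BD}$, then by Proposition~\ref{ABDJJA} it is JJ-admissible, and it satisfies \eqref{Def1} and \eqref{Def2}, hence by Remark~\ref{rem11} it satisfies \eqref{Def2.} and \eqref{Def3.}, in particular either one; (ii) conversely, suppose $({\rm A},\cdot)$ is JJ-admissible and satisfies \eqref{Def2.}; by Remark~\ref{rem11} this is \eqref{Def1}, i.e.\ ${\rm L}_x \in {\mathcal A}\mathfrak{Der}({\rm A}^{+})$ for all $x$; then $({\rm A},\cdot)$ satisfies condition (b) of Proposition~\ref{pr1}, so by that proposition it is an ${\mathcal A}{\rm BD}$-algebra; (iii) the case with \eqref{Def3.} is identical using condition (c) of Proposition~\ref{pr1} instead of (b). I do not anticipate a genuine obstacle here—the statement is essentially a repackaging of Proposition~\ref{pr1} via the dictionary of Remark~\ref{rem11}—the only point requiring minor care is making sure the equivalences in Remark~\ref{rem11} are being applied in the correct direction (each of \eqref{Def1}, \eqref{Def2} individually corresponds to \eqref{Def2.}, \eqref{Def3.} individually), so that ``satisfies \eqref{Def2.} or \eqref{Def3.}'' really does match ``satisfies \eqref{Def1} or \eqref{Def2}'' and the single-sided-to-two-sided upgrade is exactly what Proposition~\ref{pr1} provides.
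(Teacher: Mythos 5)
Your proposal is correct and follows exactly the route the paper intends for this corollary: it is an immediate consequence of Proposition~\ref{pr1} combined with the dictionary of Remark~\ref{rem11} (each of \eqref{Def1}, \eqref{Def2} individually rewrites as \eqref{Def2.}, \eqref{Def3.} respectively), with the one-sided-to-two-sided upgrade supplied by ${\rm L}_x^{\circ}={\rm L}_x+{\rm R}_x\in{\mathcal A}\mathfrak{Der}({\rm A}^{+})$ under Jacobi-Jordan-admissibility. The paper states the corollary without proof precisely because this is the argument, so your write-up matches it.
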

\begin{corollary}
	{ 
		Let $({\rm A},\cdot)$ be a Jacobi-Jordan-admissible algebra.
        Then $({\rm A},\cdot)$ is an ${\mathcal A}{\rm BD}$-algebra if and only if 
        it satisfies   \eqref{Def2.} or  \eqref{Def3.}.
	}
\end{corollary}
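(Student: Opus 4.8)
The plan is to obtain the statement as a direct consequence of Remark~\ref{rem11} and Proposition~\ref{pr1}; no new computation beyond the expansion already carried out in Remark~\ref{rem11} is needed. The point to record first is that this expansion is term-by-term reversible and deals with the two multiplications separately, so that, as an identity in $x,y,z$, the identity \eqref{Def1} is by itself equivalent to \eqref{Def2.}, and the identity \eqref{Def2} is by itself equivalent to \eqref{Def3.}. Concretely, one substitutes $y\circ z=y\cdot z+z\cdot y$, $(x\cdot y)\circ z=(x\cdot y)\cdot z+z\cdot(x\cdot y)$ and $y\circ(x\cdot z)=y\cdot(x\cdot z)+(x\cdot z)\cdot y$ into \eqref{Def1}, moves $(x\cdot y)\cdot z$ and $(x\cdot z)\cdot y$ to the left, recognizes ${\mathcal A}(x,y,z)+{\mathcal A}(x,z,y)$ there, and is left with $-z\cdot(x\cdot y)-y\cdot(x\cdot z)$ on the right; this is exactly \eqref{Def2.}, and every step is an equivalence. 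The argument turning \eqref{Def2} into \eqref{Def3.} is symmetric, expanding the right multiplications instead. Equivalently, \eqref{Def1} is precisely the assertion that ${\rm L}_{x}\in{\mathcal A}\mathfrak{Der}({\rm A}^{+})$ for all $x$, and \eqref{Def2} that ${\rm R}_{x}\in{\mathcal A}\mathfrak{Der}({\rm A}^{+})$ for all $x$.

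Granting this, the ``only if'' direction is immediate: an ${\mathcal A}{\rm BD}$-algebra satisfies \eqref{Def1} and \eqref{Def2}, hence \eqref{Def2.} and \eqref{Def3.}, so in particular at least one of them. For the converse, suppose $({\rm A},\cdot)$ is Jacobi-Jordan-admissible and satisfies \eqref{Def2.}. By the previous paragraph, \eqref{Def2.} is equivalent to \eqref{Def1}, i.e. to ${\rm L}_{x}\in{\mathcal A}\mathfrak{Der}({\rm A}^{+})$ for all $x\in{\rm A}$; thus condition (b) of Proposition~\ref{pr1} holds and that proposition yields that $({\rm A},\cdot)$ is an ${\mathcal A}{\rm BD}$-algebra. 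If instead $({\rm A},\cdot)$ satisfies \eqref{Def3.}, then equivalently \eqref{Def2} holds, i.e. ${\rm R}_{x}\in{\mathcal A}\mathfrak{Der}({\rm A}^{+})$ for all $x\in{\rm A}$, which is condition (c) of Proposition~\ref{pr1}, and the same proposition again gives that $({\rm A},\cdot)$ is an ${\mathcal A}{\rm BD}$-algebra. (The preceding corollary is the same assertion with the Jacobi-Jordan-admissibility moved into the hypothesis, and is proved in exactly the same way, or deduced directly from this one.)

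There is no genuine difficulty in this argument; the only thing worth double-checking is the claim that each of \eqref{Def1}, \eqref{Def2} is individually — not merely jointly — equivalent to \eqref{Def2.}, \eqref{Def3.} respectively, since Remark~\ref{rem11} pairs the two identities together. This is true because the passage from \eqref{Def1}--\eqref{Def2} to \eqref{Def2.}--\eqref{Def3.} never combines the two identities: the first expands only the left multiplication and produces only \eqref{Def2.}, the second expands only the right multiplication and produces only \eqref{Def3.}. Once this is noted, the corollary is a formal consequence of Proposition~\ref{pr1}.
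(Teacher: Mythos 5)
Your proof is correct, and it follows the route the paper intends: the corollary is stated without proof precisely because it is the combination of Remark~\ref{rem11} (with the observation, which you verify correctly, that the expansion identifies \eqref{Def1} with \eqref{Def2.} and \eqref{Def2} with \eqref{Def3.} separately, i.e.\ each encodes ${\rm L}_{x}\in{\mathcal A}\mathfrak{Der}({\rm A}^{+})$, resp.\ ${\rm R}_{x}\in{\mathcal A}\mathfrak{Der}({\rm A}^{+})$) and Proposition~\ref{pr1}(b), (c). Nothing is missing.
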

 
\begin{lemma}
    Let $({\rm A}, \cdot)$ be an algebra. The following assertions are equivalent:
    \begin{enumerate}
        \item[{\rm (a)}] ${\rm L}_x-{\rm R}_x\in {\mathcal A}\mathfrak{Der}({\rm A}, \circ);$
        \item[{\rm (b)}] $[x, y \circ z] = - \big( [x,y] \circ z+ y \circ [x,z] \big);$
        \item[{\rm (c)}] $ {\mathcal A}(x,y,z)+{\mathcal A}(x,z,y)-{\mathcal A}(y,z,x)-{\mathcal A}(z,y,x) = 
        (z,x,y)+(y,x,z).$    
    \end{enumerate}
\end{lemma}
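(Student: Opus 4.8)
The plan is to prove the three-way equivalence by the cycle (a) $\Leftrightarrow$ (b) $\Rightarrow$ (c) $\Rightarrow$ (b), or — since (a) and (b) are visibly the same statement once one unpacks definitions — really to prove (a) $\Leftrightarrow$ (b) by inspection and then (b) $\Leftrightarrow$ (c) by a direct algebraic manipulation of associators and antiassociators. First I would observe that $\mathrm{L}_x - \mathrm{R}_x$ applied to $w$ is exactly $x\cdot w - w\cdot x = [x,w]$, so the operator identity in (a) reads $(\mathrm{L}_x - \mathrm{R}_x)(y\circ z) = -\big((\mathrm{L}_x-\mathrm{R}_x)(y)\circ z + y\circ(\mathrm{L}_x-\mathrm{R}_x)(z)\big)$, which is literally $[x, y\circ z] = -\big([x,y]\circ z + y\circ[x,z]\big)$. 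So (a) $\Leftrightarrow$ (b) is immediate from the definition of an antiderivation of $({\rm A},\circ)$, and no computation is needed there.

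The substance is the equivalence of (b) with (c). Here I would expand both sides of (b) using $u\circ v = u\cdot v + v\cdot u$ and $[u,v]=u\cdot v - v\cdot u$, so that every term becomes a double product $p\cdot(q\cdot r)$ or $(p\cdot q)\cdot r$. The left side $[x,y\circ z]$ expands to $x\cdot(y\cdot z) + x\cdot(z\cdot y) - (y\cdot z)\cdot x - (z\cdot y)\cdot x$; the right side $-[x,y]\circ z - y\circ[x,z]$ expands to a sum of eight such terms. Collecting everything on one side, I would then recognize the resulting relation as a combination of antiassociators ${\mathcal A}(x,y,z)=(x\cdot y)\cdot z + x\cdot(y\cdot z)$ and associators $(x,y,z)=(x\cdot y)\cdot z - x\cdot(y\cdot z)$: the point is that any bracketing $p\cdot(q\cdot r)$ equals $\tfrac12({\mathcal A}(p,q,r) - (p,q,r))$ and $(p\cdot q)\cdot r = \tfrac12({\mathcal A}(p,q,r)+(p,q,r))$, so one can pass freely between the ``double product'' presentation and the ``(anti)associator'' presentation. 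Matching the six associator/antiassociator terms appearing in (c) against the expansion of (b) should be a bookkeeping exercise once the signs are tracked carefully.

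The main obstacle I anticipate is purely the sign/term-matching combinatorics: (b) involves four terms on the left after expansion and eight on the right, for twelve monomials of the form (various bracketings of permutations of $x,y,z$), while (c) involves four antiassociators and two associators, i.e. again twelve monomials after expansion, and one must verify these two length-twelve expressions coincide. A clean way to organize this is to reduce everything to the eight ``standard monomials'' $x(yz), x(zy), y(xz), y(zx), z(xy), z(yx)$ together with their mirror images $(xy)z$ etc.; but by using the associator trick above one only ever needs the symmetric part (antiassociators) and antisymmetric part (associators) in each of the three ``middle-variable'' slots, cutting the work down. I would present the verification as a single chain of equalities starting from $[x,y\circ z] + [x,y]\circ z + y\circ[x,z]$, expand, regroup into ${\mathcal A}(x,y,z)+{\mathcal A}(x,z,y) - {\mathcal A}(y,z,x) - {\mathcal A}(z,y,x) - (z,x,y) - (y,x,z)$, and conclude that this expression vanishes if and only if (c) holds, which is what is claimed. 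No deep idea is required — only care with the twelve terms — so the proof will be short, essentially one displayed computation, with the remark that (a)$\Leftrightarrow$(b) is definitional.
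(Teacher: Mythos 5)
Your proposal is correct and matches the paper's (implicit) treatment: the paper states this lemma without proof and, in the proof of the antiassociativity corollary, simply asserts the equivalence of \eqref{Eq2} and \eqref{Eq4}, which is exactly your (b)$\Leftrightarrow$(c) expansion; your observation that (a)$\Leftrightarrow$(b) is definitional via $({\rm L}_x-{\rm R}_x)(w)=[x,w]$ is also the intended argument. The twelve-monomial bookkeeping you outline does check out (both sides reduce to the same signed sum of the twelve bracketed monomials), so nothing is missing beyond writing that display out explicitly.
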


\begin{proposition}\label{pr2}
	{ 		
Let $({\rm A},\cdot)$ be a Jacobi-Jordan-admissible algebra, then $({\rm A},\cdot)$ is an ${\mathcal A}{\rm BD}$-algebra if and only if ${\rm L}_{x}-{\rm R}_{x} \in {\mathcal A}\mathfrak{Der}({\rm A}, \circ)$, i.e.
\begin{equation}\label{Eq2}
 [x,y\circ z]=- \big([x,y]\circ z+y\circ[x,z] \big).
\end{equation}

}
\end{proposition}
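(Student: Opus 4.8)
The plan is to deduce this from Proposition~\ref{pr1} together with one elementary observation about the $\circ$-multiplication. Write ${\rm L}_x^\circ$ for the left multiplication $y\mapsto x\circ y$ in ${\rm A}^+$; since $x\circ y=x\cdot y+y\cdot x$, we have ${\rm L}_x^\circ={\rm L}_x+{\rm R}_x$. First I would record that, because the defining identity $\varphi(u\circ v)=-\big(\varphi(u)\circ v+u\circ\varphi(v)\big)$ of an antiderivation is linear in $\varphi$, the set ${\mathcal A}\mathfrak{Der}({\rm A},\circ)$ is a linear subspace of the endomorphism space of ${\rm A}$; this is not in conflict with the remark in Section~\ref{subvar}{'}s preliminary discussion, which concerns the \emph{commutator} of two antiderivations rather than their sum. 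Granting this, the forward implication is immediate: if $({\rm A},\cdot)$ is an ${\mathcal A}{\rm BD}$-algebra, then by definition ${\rm L}_x,{\rm R}_x\in{\mathcal A}\mathfrak{Der}({\rm A},\circ)$, hence ${\rm L}_x-{\rm R}_x\in{\mathcal A}\mathfrak{Der}({\rm A},\circ)$, and by the Lemma preceding this Proposition this is exactly identity~\eqref{Eq2}.

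For the converse I would argue as follows. Suppose $({\rm A},\cdot)$ is Jacobi-Jordan-admissible and ${\rm L}_x-{\rm R}_x\in{\mathcal A}\mathfrak{Der}({\rm A},\circ)$. The hypothesis that ${\rm A}^+=({\rm A},\circ)$ is a Jacobi-Jordan algebra says precisely that the Jacobi identity $x\circ(y\circ z)=-\big((x\circ y)\circ z+y\circ(x\circ z)\big)$ holds, i.e. ${\rm L}_x^\circ\in{\mathcal A}\mathfrak{Der}({\rm A},\circ)$ for every $x$. Adding the two antiderivations ${\rm L}_x^\circ$ and ${\rm L}_x-{\rm R}_x$ gives $2\,{\rm L}_x\in{\mathcal A}\mathfrak{Der}({\rm A},\circ)={\mathcal A}\mathfrak{Der}({\rm A}^+)$, whence ${\rm L}_x\in{\mathcal A}\mathfrak{Der}({\rm A}^+)$ for all $x$ (dividing by $2$ is harmless over ${\mathbb C}$). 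Since $({\rm A},\cdot)$ is Jacobi-Jordan-admissible, this is exactly condition~(b) of Proposition~\ref{pr1}, whose implication (b)$\Rightarrow$(a) yields that $({\rm A},\cdot)$ is an ${\mathcal A}{\rm BD}$-algebra.

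I do not expect a genuine obstacle here; the argument is short once the right packaging is chosen. The only points that need a line of care are that ${\mathcal A}\mathfrak{Der}$ is closed under addition although not under the commutator, and the (innocuous) division by $2$. One could alternatively stay at the level of identities, adding \eqref{Eq2} to the polarized Jacobi identity for ${\rm A}^+$ and using $[x,w]+x\circ w=2\,x\cdot w$ to recover \eqref{Def1} directly; but the operator formulation via ${\rm L}_x^\circ={\rm L}_x+{\rm R}_x$ keeps the bookkeeping transparent and sidesteps rewriting the associator identities of Remark~\ref{rem11}.
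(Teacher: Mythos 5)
Your proof is correct and is essentially the paper's argument: both rely on ${\mathcal A}\mathfrak{Der}({\rm A},\circ)$ being a linear subspace containing ${\rm L}_x^{\circ}={\rm L}_x+{\rm R}_x$ (from Jacobi-Jordan-admissibility) and ${\rm L}_x-{\rm R}_x$, and then invoke Proposition~\ref{pr1}. The only cosmetic difference is that you solve for ${\rm L}_x=\frac{1}{2}\big({\rm L}_x^{\circ}+({\rm L}_x-{\rm R}_x)\big)$ and use part (b), while the paper solves for ${\rm R}_x=\frac{1}{2}\big({\rm L}_x^{\circ}-({\rm L}_x-{\rm R}_x)\big)$ and uses part (c).
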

\begin{proof}
	{\rm
If $({\rm A},\cdot)$ is an ${\mathcal A}{\rm BD}$-algebra then ${\rm L}_{x}-{\rm R}_{x}\in {\mathcal A}\mathfrak{Der}({\rm A}, \circ)$. Conversely, since $({\rm A},\cdot)$ is a Jacobi-Jordan-admissible algebra then ${\rm L}_{x}^{\circ}\in {\mathcal A}\mathfrak{Der}({\rm A}, \circ)$. Moreover, it is easy to see that: \begin{longtable}{lcl}${\rm R}_{x}$&$=$&$\frac{1}{2}\big({\rm L}_{x}^{\circ}-({\rm L}_{x} -{\rm R}_{x})\big),$
\end{longtable} \noindent that means ${\rm R}_{x}\in {\mathcal A}\mathfrak{Der}({\rm A}, \circ)$. Then by Proposition \ref{pr1},  $({\rm A},\cdot)$ is an ${\mathcal A}{\rm BD}$-algebra.       

}
\end{proof}
\begin{remark}
	{ 
Relation \eqref{Eq2} can be interpreted saying that for any $z\in {\rm A}$, the linear map $[\cdot ,z]: x\rightarrow [x,z]$ is an antiderivation of ${\rm A}^{+}$, which is equivalent to say that the bilinear map: $[\cdot ,\cdot ]: {\rm A}\times {\rm A}\rightarrow {\rm A}$ is a skew-symmetric anti-biderivation of ${\rm A}^{+}$. Recall that a bilinear map $\delta: {\rm A}\times {\rm A}\rightarrow {\rm A}$ is called skew-symmetric anti-biderivation of an algebra $({\rm A},\cdot)$, if it satisfies the following identities:
\begin{longtable}{rcl}
$\delta(x, y)$&$=$&$-\delta(y, x),$\\
$\delta(x, y\cdot z)$&$=$&$-\big(\delta(x, y)\cdot z+y\cdot  \delta(x, z) \big).
$\end{longtable}
 }
\end{remark}	

\begin{remark}
	{ 
Proposition \ref{pr2}, shows that there is a one-to-one correspondence between the set of ${\mathcal A}{\rm BD}$-algebra and the set of Jacobi-Jordan algebras with skew-symmetric anti-biderivations.
}
\end{remark}
\begin{corollary}
	{ 	
		Let $({\rm A},\cdot)$ be an antiassociative algebra, then $({\rm A},\cdot)$ is an ${\mathcal A}{\rm BD}$-algebra if and only if $({\rm A},\cdot)$ is flexible.
		
	}
\end{corollary}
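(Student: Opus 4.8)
The plan is to show the two directions of the equivalence by relating antiassociativity, the $\mathcal{A}{\rm BD}$-identities, and flexibility through the polarization/depolarization formulas already recorded in the Preliminaries. Throughout, I would assume $(\mathrm{A},\cdot)$ is antiassociative, i.e. $\mathcal{A}(x,y,z)=0$ for all $x,y,z$, and I would use the characterization from Remark~\ref{rem11}: the $\mathcal{A}{\rm BD}$-conditions \eqref{Def1}--\eqref{Def2} are equivalent to \eqref{Def2.} and \eqref{Def3.}, namely $\mathcal{A}(x,y,z)+\mathcal{A}(x,z,y)=-z\cdot(x\cdot y)-y\cdot(x\cdot z)$ and $\mathcal{A}(y,z,x)+\mathcal{A}(z,y,x)=-(y\cdot x)\cdot z-(z\cdot x)\cdot y$. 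Since an antiassociative algebra is Jacobi-Jordan-admissible (this is the Example attributed to \cite{OK}, and also follows from Proposition~\ref{JJadm} since $(x\cdot x)\cdot x = -x\cdot(x\cdot x)$ is immediate from $\mathcal{A}(x,x,x)=0$), I can instead invoke the Corollary preceding this one: a Jacobi-Jordan-admissible algebra is $\mathcal{A}{\rm BD}$ iff it satisfies \eqref{Def2.} or \eqref{Def3.}. So it suffices to show that, under antiassociativity, \eqref{Def2.} (equivalently \eqref{Def3.}) holds if and only if the algebra is flexible.

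First I would substitute $\mathcal{A}\equiv 0$ into \eqref{Def2.}: the left-hand side vanishes, so \eqref{Def2.} becomes $z\cdot(x\cdot y)+y\cdot(x\cdot z)=0$ for all $x,y,z$. Using antiassociativity to rewrite $z\cdot(x\cdot y) = -(z\cdot x)\cdot y$ and $y\cdot(x\cdot z) = -(y\cdot x)\cdot z$, this is equivalent to $(z\cdot x)\cdot y + (y\cdot x)\cdot z = 0$, which is exactly \eqref{Def3.} with $\mathcal{A}\equiv 0$ — confirming the two conditions coincide here, as expected. Now the content: I claim $z\cdot(x\cdot y) + y\cdot(x\cdot z) = 0$ for all $x,y,z$ is equivalent to flexibility $\mathcal{A}(x,y,z)=\mathcal{A}(z,y,x)$ — but wait, under antiassociativity both associators in the flexible identity $(x,y,z)=-(z,y,x)$ need care. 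Let me instead argue directly with associators. Flexibility reads $(x,y,z)+(z,y,x)=0$, i.e. $(x\cdot y)\cdot z - x\cdot(y\cdot z) + (z\cdot y)\cdot x - z\cdot(y\cdot x)=0$. Under antiassociativity $(x\cdot y)\cdot z = -x\cdot(y\cdot z)$ and $(z\cdot y)\cdot x = -z\cdot(y\cdot x)$, so flexibility becomes $-2x\cdot(y\cdot z) - 2z\cdot(y\cdot x) = 0$, i.e. $x\cdot(y\cdot z) + z\cdot(y\cdot x) = 0$. Relabeling $(x,y,z)\mapsto(z,x,y)$ turns this into $z\cdot(x\cdot y) + y\cdot(x\cdot z) = 0$, which is precisely the reduced form of \eqref{Def2.}. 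So the two identities are literally relabelings of each other once antiassociativity is imposed.

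Hence the full argument is: $(\mathrm{A},\cdot)$ antiassociative $\Rightarrow$ Jacobi-Jordan-admissible (Example/Proposition~\ref{JJadm}); then by the preceding Corollary, $\mathcal{A}{\rm BD}$ $\iff$ \eqref{Def2.} $\iff$ (via $\mathcal{A}\equiv 0$ and antiassociativity) $z\cdot(x\cdot y)+y\cdot(x\cdot z)=0$ $\iff$ (by relabeling and the associator computation above) flexibility. I expect no real obstacle here — every step is a short identity manipulation using only the Preliminaries — so the only thing to be careful about is bookkeeping: making sure the sign conventions for $(x,y,z)$ versus $\mathcal{A}(x,y,z)$ and the index permutations are tracked consistently, and explicitly citing that antiassociative $\Rightarrow$ Jacobi-Jordan-admissible so that the Corollary applies. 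A clean write-up would present the single chain of equivalences rather than two separate implications.
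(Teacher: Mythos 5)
Your proposal is correct and follows essentially the same strategy as the paper: both reduce the statement to ``antiassociative $\Rightarrow$ Jacobi-Jordan-admissible'' plus a characterization of ${\mathcal A}{\rm BD}$ among Jacobi-Jordan-admissible algebras, and then observe that the residual identity collapses to flexibility once the antiassociator vanishes. The only (immaterial) difference is that you route through \eqref{Def2.} and Proposition~\ref{pr1}, verifying the equivalence with flexibility by a direct two-line computation, whereas the paper routes through \eqref{Eq2} and its equivalent form \eqref{Eq4} via Proposition~\ref{pr2}.
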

\begin{proof}
	{\rm Note that the identity \eqref{Eq2} is equivalent to: 
		\begin{equation}\label{Eq4}		
		{\mathcal A}(x,y,z)+{\mathcal A}(x,z,y)-{\mathcal A}(y,z,x)-{\mathcal A}(z,y,x) \ =\ (z,x,y)+(y,x,z).
		\end{equation}
		Since $({\rm A},\cdot)$ is antiassociative,   \eqref{Eq4} becomes $(z,x,y)+(y,x,z) \ =\ 0$, then $({\rm A},\cdot)$ is flexible. The converse is obtained by using that every antiassociative algebra is Jacobi-Jordan-admissible. 
	}
\end{proof}

It is known that pre-Jacobi-Jordan algebras are a particular case of Jacobi-Jordan-admissible algebras. Then, we deduce the following:
\begin{proposition}
	{ 		
		A left $\big($resp., right$\big)$ pre-Jacobi-Jordan algebra $({\rm A},\cdot)$ is an ${\mathcal A}{\rm BD}$-algebra if and only if \eqref{Remark3} $\big($resp., \eqref{Remark2}$\big)$ holds.					
	}
\end{proposition}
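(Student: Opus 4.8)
The plan is to deduce everything from the Corollary above, which states that a Jacobi-Jordan-admissible algebra is an ${\mathcal A}{\rm BD}$-algebra precisely when it satisfies \eqref{Def2.} or \eqref{Def3.}. Since a left (resp.\ right) pre-Jacobi-Jordan algebra is Jacobi-Jordan-admissible — indeed, if ${\mathcal A}(x,y,z)=-{\mathcal A}(y,x,z)$ then the six antiassociators in Proposition~\ref{JJadm} cancel in pairs, and symmetrically in the right case — it suffices to show that, in the presence of the left (resp.\ right) pre-Jacobi-Jordan identity, condition \eqref{Def3.} (resp.\ \eqref{Def2.}) is equivalent to \eqref{Remark3} (resp.\ \eqref{Remark2}).

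For the left case I would start from \eqref{Def3.}, namely ${\mathcal A}(y,z,x)+{\mathcal A}(z,y,x)=-(y\cdot x)\cdot z-(z\cdot x)\cdot y$, and apply the left pre-Jacobi-Jordan identity in the form ${\mathcal A}(y,z,x)=-{\mathcal A}(z,y,x)$ to see that the left-hand side vanishes identically. Thus \eqref{Def3.} collapses to $(y\cdot x)\cdot z+(z\cdot x)\cdot y=0$, and swapping the roles of $x$ and $y$ turns this into $(x\cdot y)\cdot z=-(z\cdot y)\cdot x$, which is exactly \eqref{Remark3}. Conversely, if \eqref{Remark3} holds then the right-hand side of \eqref{Def3.} vanishes, so \eqref{Def3.} holds; feeding this into the Corollary gives the left assertion. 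The right case is the mirror image: the identity ${\mathcal A}(x,y,z)=-{\mathcal A}(x,z,y)$ annihilates the left-hand side of \eqref{Def2.}, reducing it to $z\cdot(x\cdot y)+y\cdot(x\cdot z)=0$, which under the relabelling $(x,y,z)\mapsto(y,z,x)$ is precisely \eqref{Remark2}; the converse and the appeal to the Corollary are as before.

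There is no genuine obstacle here — the proof is a two-line reduction once the Corollary is available. The only points demanding care are clerical: getting the variable renamings right so that the collapsed cubic identities literally coincide with \eqref{Remark3} and \eqref{Remark2} as displayed, and invoking the correct half of the Corollary (the direction asserting that a single occurrence of \eqref{Def2.} or \eqref{Def3.}, together with Jacobi-Jordan-admissibility, already forces the full ${\mathcal A}{\rm BD}$ pair \eqref{Def1}--\eqref{Def2}). One should also record explicitly that \emph{both} one-sided pre-Jacobi-Jordan identities imply Jacobi-Jordan-admissibility, since the Corollary is phrased for Jacobi-Jordan-admissible algebras.
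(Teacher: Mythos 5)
Your proof is correct. It differs from the paper's route only in which characterization of ${\mathcal A}{\rm BD}$-algebras it feeds into: the paper deduces the statement from Proposition \ref{pr2} and identity \eqref{Eq4} (the condition ${\rm L}_x-{\rm R}_x\in{\mathcal A}\mathfrak{Der}({\rm A}^+)$ for a Jacobi-Jordan-admissible algebra), whereas you invoke the corollary to Proposition \ref{pr1}, namely that a Jacobi-Jordan-admissible algebra is ${\mathcal A}{\rm BD}$ as soon as one of the single-sided conditions \eqref{Def2.} or \eqref{Def3.} holds. Your choice is slightly more economical: under the left (resp.\ right) pre-Jacobi-Jordan antisymmetry the antiassociator side of \eqref{Def3.} (resp.\ \eqref{Def2.}) vanishes identically, so the equivalence with \eqref{Remark3} (resp.\ \eqref{Remark2}) is immediate after relabelling variables, while the paper's identity \eqref{Eq4} still contains associator terms that must be removed by a further application of the pre-Jacobi-Jordan identity before the same cubic relation appears. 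Both arguments rest on the same preliminary fact, which you rightly make explicit via Proposition \ref{JJadm}, that left and right pre-Jacobi-Jordan algebras are Jacobi-Jordan-admissible; your variable bookkeeping in the two collapsed identities checks out.
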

\begin{proof}
	{\rm	
		That comes directly from Proposition \ref{pr2} and Equation \eqref{Eq4}. 
	}
\end{proof} 

Thanks to \cite{Z17}, the variety of antiassociative anticommutative algebras coincides with the variety of Koszul dual mock Lie (= Koszul dual Jacobi-Jordan) algebras. Hence, we have the following corollary. 
\begin{corollary}\label{dml}
	{ 		
		Every Koszul dual Jacobi-Jordan   algebra  is an ${\mathcal A}{\rm BD}$-algebra 				
	}
\end{corollary}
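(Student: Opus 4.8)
The plan is to deduce this corollary directly from the Corollary immediately preceding it, which asserts that an antiassociative algebra is an ${\mathcal A}{\rm BD}$-algebra if and only if it is flexible. By \cite{Z17}, the variety of Koszul dual Jacobi-Jordan algebras is precisely the variety of algebras $({\rm A},\cdot)$ that are simultaneously antiassociative and anticommutative. So the whole statement reduces to a single observation: every anticommutative algebra is flexible. Granting that, an arbitrary Koszul dual Jacobi-Jordan algebra is antiassociative and flexible, hence an ${\mathcal A}{\rm BD}$-algebra by the preceding Corollary.

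For the one piece of actual content, I would check the implication ``anticommutative $\Rightarrow$ flexible'' straight from $x\cdot y=-y\cdot x$. The computation runs
\[
(z,y,x)=(z\cdot y)\cdot x-z\cdot(y\cdot x)=-x\cdot(z\cdot y)+(y\cdot x)\cdot z=x\cdot(y\cdot z)-(x\cdot y)\cdot z=-(x,y,z),
\]
where the second equality pulls the outer factors across the multiplication and the third pulls the inner ones; this is exactly the flexibility identity $(x,y,z)=-(z,y,x)$. Everything else in the argument is a chain of facts already established in the excerpt (in particular, antiassociative $\Rightarrow$ Jacobi-Jordan-admissible, from the Example above), so there is essentially no obstacle here: the only place to be careful is the sign bookkeeping in the display.

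As an alternative route I would note that one can bypass the preceding Corollary entirely and verify \eqref{Eq4} directly for such an algebra: antiassociativity annihilates the left-hand side, while anticommutativity (via flexibility, or by the same sign check applied to the ordinary associator $(\cdot,\cdot,\cdot)$) annihilates the right-hand side, so \eqref{Eq2} holds and the algebra is an ${\mathcal A}{\rm BD}$-algebra by Proposition~\ref{pr2}. Either way, the proof is short and the ``hard part'' amounts to nothing more than confirming that an anticommutative multiplication is automatically flexible.
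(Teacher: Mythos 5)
Your proof is correct and follows essentially the same route the paper intends: identify Koszul dual Jacobi--Jordan algebras with anticommutative antiassociative algebras via \cite{Z17}, observe that anticommutativity forces flexibility, and invoke the preceding corollary (equivalently, check \eqref{Eq4} and apply Proposition \ref{pr2}). The sign computation showing $(z,y,x)=-(x,y,z)$ for anticommutative products is accurate, so no gap remains.
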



\begin{definition}[see, \cite{dP}] 
    A generic anti-Poisson-Jacobi-Jordan algebra is a triple 
    $({\rm A},\circ,[\cdot,\cdot]),$ where ${\rm A}$  is a vector space, $[\cdot,\cdot]$ is an anticommutative multiplication and $\circ$ is a Jacobi-Jordan multiplication on ${\rm A},$  such that these two bilinear maps satisfy the anti-Leibniz identity:
\begin{longtable}{lcl}
$[x,y\circ z]+y\circ [x,z]+[x,y]\circ z$&$ =$&$0.$
\end{longtable} 
\end{definition}

\noindent
We deduce that $[\cdot,\cdot]$ define an ${\mathcal A}{\rm BD}$-structure on $({\rm A},\circ).$

\begin{definition}[see, \cite{dP,R22}] 
    A generic transposed anti-Poisson-Jacobi-Jordan algebra\footnote{We are using the terminology from \cite{dP}, but it was called an anti-Poisson algebra  in \cite{R22}.} is a triple $({\rm A},\circ,[\cdot,\cdot]),$ where ${\rm A}$  is a vector space, $[\cdot,\cdot]$ is an anticommutative multiplication and $\circ$ is a Jacobi-Jordan multiplication on ${\rm A},$  such that these two bilinear maps satisfy the transposed anti-Leibniz identity:
\begin{longtable}{lcl}
$[x,y]\circ z+[x,y\circ z]+[x\circ z,y]$&$=$&$0.$
\end{longtable} 
\end{definition}
\noindent We deduce that $[\cdot,\cdot]$ define an ${\mathcal A}{\rm BD}$-structure on $({\rm A},\circ)$ if and only if:
\begin{equation*}
[x\circ z,y]\ =\ [x,z]\circ y.  
\end{equation*}
Which is equivalent to:
\begin{equation}\label{int}
(z\cdot x)\cdot  y \ =\ y\cdot (x\cdot z)\footnote{Algebras with the present identity were considered in \cite{aks24} under the name {\it algebras of first associative type with $\sigma=(13)$}.}.
\end{equation}
We also have to mention that the variety of algebras defined by the identity \eqref{int} gives the intersection of flexible and ${\mathcal A}$flexible algebras.
\begin{corollary}
	{ 	
		If identity \eqref{int} holds, then every 
        generic transposed anti-Poisson-Jacobi-Jordan admissible algebra $({\rm A},\cdot)$ is an ${\mathcal A}{\rm BD}$-algebra. 				
	}
\end{corollary}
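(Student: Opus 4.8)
The plan is to reduce the statement to Proposition~\ref{pr2} through the polarization dictionary, so that almost nothing remains to be computed. By definition, a generic transposed anti-Poisson-Jacobi-Jordan \emph{admissible} algebra $({\rm A},\cdot)$ is an algebra whose symmetrization $({\rm A},\circ)$ is a Jacobi-Jordan algebra --- equivalently, $({\rm A},\cdot)$ is Jacobi-Jordan-admissible --- and whose bracket $[\cdot,\cdot]$ satisfies the transposed anti-Leibniz identity $[x,y]\circ z+[x,y\circ z]+[x\circ z,y]=0$, with $x\cdot y=\frac{1}{2}\big([x,y]+x\circ y\big)$. Since Jacobi-Jordan-admissibility is already granted by hypothesis, Proposition~\ref{pr2} tells us that it suffices to verify the single identity \eqref{Eq2}, namely $[x,y\circ z]=-\big([x,y]\circ z+y\circ[x,z]\big)$.

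First I would record the elementary equivalence that was already used in the paragraph preceding the corollary: expanding $[x\circ z,y]$ and $[x,z]\circ y$ via the polarization formulas and using that $\circ$ is commutative while $[\cdot,\cdot]$ is anticommutative, one finds $[x\circ z,y]-[x,z]\circ y=2\big((z\cdot x)\cdot y-y\cdot(x\cdot z)\big)$; hence identity \eqref{int} holds if and only if $[x\circ z,y]=[x,z]\circ y$. Then I would substitute this into the transposed anti-Leibniz identity: replacing $[x\circ z,y]$ by $[x,z]\circ y=y\circ[x,z]$ turns $[x,y]\circ z+[x,y\circ z]+[x\circ z,y]=0$ into $[x,y]\circ z+[x,y\circ z]+y\circ[x,z]=0$, which is exactly \eqref{Eq2}. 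Applying Proposition~\ref{pr2} then gives that $({\rm A},\cdot)$ is an ${\mathcal A}{\rm BD}$-algebra, which is the assertion; equivalently, this says that the bracket $[\cdot,\cdot]$ is a skew-symmetric anti-biderivation of the Jacobi-Jordan algebra $({\rm A},\circ)$, i.e.\ it defines an ${\mathcal A}{\rm BD}$-structure on $({\rm A},\circ)$.

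There is essentially no serious obstacle; the proof is a short chain of substitutions built on material already established in the excerpt. The only point requiring care is the sign and commutativity bookkeeping in the equivalence \eqref{int} $\Leftrightarrow$ $[x\circ z,y]=[x,z]\circ y$: one must consistently use $[x,z]\circ y=y\circ[x,z]$ (commutativity of $\circ$) and keep track of the factor $2$ arising from $x\circ z=x\cdot z+z\cdot x$. Once this is pinned down, the conclusion follows immediately from Proposition~\ref{pr2}.
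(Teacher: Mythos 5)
Your proposal is correct and follows essentially the route the paper intends: the corollary is a direct consequence of the discussion immediately preceding it, where the transposed anti-Leibniz identity together with the equivalence \eqref{int} $\Leftrightarrow$ $[x\circ z,y]=[x,z]\circ y$ turns the hypothesis into the skew-symmetric anti-biderivation condition \eqref{Eq2}, and Proposition~\ref{pr2} then yields the ${\mathcal A}{\rm BD}$ property. Your sign and factor-of-$2$ bookkeeping in verifying that equivalence is accurate, so nothing is missing.
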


 

		

\section{Subvarieties  of the variety of ${\mathcal A}{\rm BD}$-algebras}\label{subvar}
\subsection{Anti-Leibniz algebras}
\begin{definition}[see, \cite{BCM}]
	{ 
		Let $({\rm A},\cdot)$ be an algebra. $({\rm A},\cdot)$ is said to be left $\big($resp., right$\big)$ anti-Leibniz algebra if, for all $x,y,z\in {\rm A}:$
		\begin{equation}\label{Eq9}
		x\cdot (y\cdot z)=-(x\cdot y)\cdot  z-y\cdot (x\cdot z) 
		\end{equation} 
		\begin{equation}\label{Eq10}
		\big(resp.,\;\; (y\cdot z)\cdot  x=-(y\cdot x)\cdot  z-y\cdot (z\cdot x) \big).
		\end{equation} 
		$({\rm A},\cdot)$ is said to be a symmetric anti-Leibniz algebra if  \eqref{Eq9} and \eqref{Eq10} hold.
		
	}
\end{definition}
\begin{remark}
	{ 
		Any Jacobi-Jordan algebra is a left $\big($resp., right$\big)$ anti-Leibniz algebra. Conversely, a left $\big($resp., right$\big)$ anti-Leibniz algebra is a Jacobi-Jordan algebra if and only if its product is commutative. 		
	}
\end{remark}

\begin{lemma}\label{lemma1} 
	{ 
		Let $({\rm A},\cdot)$ be an algebra. 
        If $\varphi \in {\mathcal A}\mathfrak{Der}({\rm A}, \cdot),$ then $\varphi \in {\mathcal A}\mathfrak{Der}({\rm A}, \circ).$   				
	}
\end{lemma}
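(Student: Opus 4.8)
The plan is to unwind the definitions and reduce the claim to a direct computation. Suppose $\varphi\in\mathcal{A}\mathfrak{Der}({\rm A},\cdot)$, so that $\varphi(x\cdot y)=-\big(\varphi(x)\cdot y+x\cdot\varphi(y)\big)$ for all $x,y\in{\rm A}$. I want to show $\varphi(x\circ y)=-\big(\varphi(x)\circ y+x\circ\varphi(y)\big)$, where $x\circ y=x\cdot y+y\cdot x$.

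The key step is simply to expand $\varphi(x\circ y)=\varphi(x\cdot y)+\varphi(y\cdot x)$ using linearity of $\varphi$, then apply the antiderivation property to each summand:
\begin{longtable}{lcl}
$\varphi(x\circ y)$&$=$&$\varphi(x\cdot y)+\varphi(y\cdot x)$\\
&$=$&$-\big(\varphi(x)\cdot y+x\cdot\varphi(y)\big)-\big(\varphi(y)\cdot x+y\cdot\varphi(x)\big)$\\
&$=$&$-\big(\varphi(x)\cdot y+y\cdot\varphi(x)\big)-\big(x\cdot\varphi(y)+\varphi(y)\cdot x\big)$\\
&$=$&$-\big(\varphi(x)\circ y+x\circ\varphi(y)\big).$
\end{longtable}
This establishes $\varphi\in\mathcal{A}\mathfrak{Der}({\rm A},\circ)$.

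There is essentially no obstacle here: the statement is the observation that the polarization $\circ$ is built linearly out of the original product, and the antiderivation condition is linear in its inputs, so it transfers verbatim after symmetrizing. (By the same token one would note $\varphi$ need not be an antiderivation of $[\cdot,\cdot]$ unless one already knows extra structure, but that is not claimed.) I would present exactly the three-line computation above as the proof, perhaps remarking that the analogous statement fails for the bracket $[\cdot,\cdot]$ in general, which is why the lemma is stated only for $\circ$.
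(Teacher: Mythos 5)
Your computation is correct and is exactly the direct expansion that the paper leaves implicit (Lemma \ref{lemma1} is stated there without proof): expand $\varphi(x\circ y)=\varphi(x\cdot y)+\varphi(y\cdot x)$, apply the antiderivation identity to each term, and regroup. One caveat about your closing aside: the claim that the analogous statement fails for $[\cdot,\cdot]$ is not right --- the same computation with a minus sign in the second summand gives $\varphi([x,y])=\varphi(x\cdot y)-\varphi(y\cdot x)=-\big([\varphi(x),y]+[x,\varphi(y)]\big)$, so an antiderivation of $({\rm A},\cdot)$ is automatically an antiderivation of $({\rm A},[\cdot,\cdot])$ as well; the paper states the lemma only for $\circ$ because that is the case it needs, not because the bracket version is false.
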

\begin{proposition}
	 Every symmetric anti-Leibniz algebra $({\rm A},\cdot)$ is an ${\mathcal A}{\rm BD}$-algebra. 
	  A left $\big($resp., right$\big)$ anti-Leibniz algebra $({\rm A},\cdot)$ is an ${\mathcal A}{\rm BD}$-algebra if and only if it is a Jacobi-Jordan-admissible algebra.
	 
\end{proposition}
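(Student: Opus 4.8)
The plan is to reduce both assertions to the antiderivation characterization of ${\mathcal A}{\rm BD}$-algebras together with Lemma \ref{lemma1}, rather than to grind through the identities \eqref{Def1}--\eqref{Def2} directly.

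For the first assertion, I would observe that the identities \eqref{Eq9} and \eqref{Eq10} say precisely that, for every $x\in {\rm A}$, the left multiplication ${\rm L}_x$ and the right multiplication ${\rm R}_x$ are antiderivations of $({\rm A},\cdot)$. By Lemma \ref{lemma1}, any antiderivation of $({\rm A},\cdot)$ is an antiderivation of $({\rm A},\circ)={\rm A}^{+}$. Hence ${\rm L}_x,{\rm R}_x\in {\mathcal A}\mathfrak{Der}({\rm A}^{+})$ for all $x$, which is exactly the defining condition for $({\rm A},\cdot)$ to be an ${\mathcal A}{\rm BD}$-algebra. Note that no separate check of Jacobi-Jordan-admissibility is needed here: it is subsumed by this characterization (and in any case follows a posteriori from Proposition \ref{ABDJJA}).

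For the second assertion, the ``only if'' direction is immediate from Proposition \ref{ABDJJA}, which states that every ${\mathcal A}{\rm BD}$-algebra is Jacobi-Jordan-admissible, and which does not even use the anti-Leibniz hypothesis. For the ``if'' direction, suppose $({\rm A},\cdot)$ is a left anti-Leibniz algebra that is also Jacobi-Jordan-admissible. Identity \eqref{Eq9} gives ${\rm L}_x\in {\mathcal A}\mathfrak{Der}({\rm A},\cdot)$, hence ${\rm L}_x\in {\mathcal A}\mathfrak{Der}({\rm A}^{+})$ by Lemma \ref{lemma1}, for every $x\in {\rm A}$. Combined with Jacobi-Jordan-admissibility, Proposition \ref{pr1}(b) yields that $({\rm A},\cdot)$ is an ${\mathcal A}{\rm BD}$-algebra. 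The right anti-Leibniz case is completely analogous, using \eqref{Eq10}, Lemma \ref{lemma1}, and Proposition \ref{pr1}(c).

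The argument is essentially mechanical, so I do not anticipate a genuine obstacle; the only subtlety worth flagging is making sure the right ingredients (Lemma \ref{lemma1}, Propositions \ref{ABDJJA} and \ref{pr1}) are invoked in the correct combination, and that in the ``if'' direction one really does need the extra Jacobi-Jordan-admissibility assumption, since a bare left (or right) anti-Leibniz algebra need not have commutative plus-algebra and hence need not be Jacobi-Jordan-admissible on its own.
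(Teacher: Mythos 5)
Your proof is correct and follows essentially the same route as the paper: identify ${\rm L}_x$ (and ${\rm R}_x$) as antiderivations of $({\rm A},\cdot)$ from \eqref{Eq9}--\eqref{Eq10}, pass to ${\mathcal A}\mathfrak{Der}({\rm A}^{+})$ via Lemma \ref{lemma1}, and use Propositions \ref{ABDJJA} and \ref{pr1} for the second assertion. No meaningful differences to report.
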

\begin{proof}
	 First, let $({\rm A},\cdot)$ be a symmetric anti-Leibniz algebra, then the identity \eqref{Eq9}  $\big($resp.,  \eqref{Eq10}$\big)$ means that ${\rm L}_{x}$ $\big($resp., ${\rm R}_{x}$$\big)$ $\in {\mathcal A}\mathfrak{Der}({\rm A},\cdot)$. We deduce from Lemma \ref{lemma1}, that ${\rm L}_{x}$ (resp., ${\rm R}_{x})$ $\in {\mathcal A}\mathfrak{Der}({\rm A}^{+})$.
		
	Second,  if $({\rm A},\cdot)$ is an ${\mathcal A}{\rm BD}$-algebra, then it is a Jacobi-Jordan-admissible algebra. 
    Conversely, let $({\rm A},\cdot)$ be a Jacobi-Jordan-admissible algebra. As ${\rm L}_{x}$ $\in {\mathcal A}\mathfrak{Der}({\rm A},\cdot)$ then by Lemma \ref{lemma1}, we have ${\rm L}_{x}$ $\in {\mathcal A}\mathfrak{Der}({\rm A}^{+})$. Thanks to Proposition \ref{pr1}, we have  that $({\rm A},\cdot)$ is an ${\mathcal A}{\rm BD}$-algebra.  
 
\end{proof}

\subsection{${\mathcal A}{\rm LR}$-algebras }
\begin{definition}
	{ 
		Let $({\rm A},\cdot)$ be an algebra. For $x,y,z \in {\rm A}$, we define the trilinear maps $[\cdot,\cdot,\cdot]_{{\mathcal A}{\rm L}}:{\rm A}\times {\rm A}\times {\rm A}\rightarrow {\rm A}$ and $[\cdot,\cdot,\cdot]_{{\mathcal A}{\rm R}}:{\rm A}\times {\rm A}\times {\rm A}\rightarrow {\rm A}$ by:
		
		$$[x,y,z]_{{\mathcal A}{\rm L}}:=x\cdot (y\cdot z)+y\cdot (x\cdot z)\;\; \mbox{and}\;\; [x,y,z]_{{\mathcal A}{\rm R}}:=(x\cdot y)\cdot  z+(x\cdot z)\cdot  y\cdot $$ Then, 
		\begin{enumerate}
			\item [{\rm (a)}] $({\rm A},\cdot)$ is called an ${\mathcal A}{\rm L}$-algebra $\big($anti-left-commutative$\big)$ if $[x,y,z]_{{\mathcal A}{\rm L}}=0.$ 
			\item [{\rm (b)}] $({\rm A},\cdot)$ is called an ${\mathcal A}{\rm R}$-algebra $\big($anti-right-commutative\footnote{One of the examples of anti-right-commutative algebras is the  fermionic Novikov algebras \cite{BNH}.}$\big)$ if $[x,y,z]_{{\mathcal A}{\rm R}}=0.$ 
			\item [{\rm (c)}] $({\rm A},\cdot)$ is called an ${\mathcal A}{\rm LR}$-algebra $\big($antibicommutative$\big)$ if it is at the same time an ${\mathcal A}{\rm L}$-algebra and an ${\mathcal A}{\rm R}$-algebra.
		\end{enumerate}
	}
\end{definition}
\begin{remark}
	{  $[x,y,z]_{{\mathcal A}{\rm L}}=[y,x,z]_{{\mathcal A}{\rm L}}$  and  $[x,z,y]_{{\mathcal A}{\rm R}}=[x,z,y]_{{\mathcal A}{\rm R}},\;\;$ for all $\;x,y,z\in {\rm A}$.
		
	}
\end{remark}
\begin{example}
	{ 
Any anticommutative antiassociative (= Koszul dual Jacobi-Jordan) algebra is an ${\mathcal A}{\rm LR}$-algebra. 
}
\end{example}
We get the new following characterization of the ${\mathcal A}{\rm BD}$-algebras by means of the trilinear maps $[\cdot,\cdot,\cdot]_{{\mathcal A}{\rm L}}$ and $[\cdot,\cdot,\cdot]_{{\mathcal A}{\rm R}}$. 
\begin{proposition}
	{ 		
		Let $({\rm A},\cdot)$ be an algebra. Then $({\rm A},\cdot)$ is an ${\mathcal A}{\rm BD}$-algebra if and only if 
		\begin{equation}\label{Def4.}
		[x,y,z]_{{\mathcal A}{\rm L}}+[x,z,y]_{{\mathcal A}{\rm L}}+[x,y,z]_{{\mathcal A}{\rm R}}\ =\ 0,	
		\end{equation} 
		\begin{equation}\label{Def5.}
		[y,z,x]_{{\mathcal A}{\rm R}}+[z,y,x]_{{\mathcal A}{\rm R}}+[z,y,x]_{{\mathcal A}{\rm L}}\ =\ 0.	
		\end{equation} 		
	}
\end{proposition}
\begin{proof}
	{\rm
Using the identity \eqref{Def2.},
$${\mathcal A}(x,y,z)+{\mathcal A}(x,z,y)\ =\ -z\cdot (x\cdot y)-y\cdot (x\cdot z),$$
this is equivalent to $$(x\cdot y)\cdot  z+x\cdot (y\cdot z)+(x\cdot z)\cdot  y+x\cdot (z\cdot y)\ =\ -z\cdot (x\cdot y)-y\cdot (x\cdot z)$$
Then  \eqref{Def4.} holds. Similarly, to get  \eqref{Def5.} we use  \eqref{Def3.}.		
		
}
\end{proof}		
As a direct result we deduce the following corollary, 
which generalize Corollary \ref{dml}.
\begin{corollary}\label{corALR}
	{ 
		Any ${\mathcal A}{\rm LR}$-algebra is an ${\mathcal A}{\rm BD}$-algebra.  			
	}
\end{corollary}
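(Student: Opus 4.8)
The plan is to derive Corollary \ref{corALR} directly from the preceding Proposition characterizing ${\mathcal A}{\rm BD}$-algebras via the trilinear maps $[\cdot,\cdot,\cdot]_{{\mathcal A}{\rm L}}$ and $[\cdot,\cdot,\cdot]_{{\mathcal A}{\rm R}}$, that is, via identities \eqref{Def4.} and \eqref{Def5.}. Suppose $({\rm A},\cdot)$ is an ${\mathcal A}{\rm LR}$-algebra. By definition it is simultaneously an ${\mathcal A}{\rm L}$-algebra and an ${\mathcal A}{\rm R}$-algebra, so $[x,y,z]_{{\mathcal A}{\rm L}}=0$ and $[x,y,z]_{{\mathcal A}{\rm R}}=0$ identically for all $x,y,z \in {\rm A}$. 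In particular each of the three summands on the left-hand side of \eqref{Def4.} vanishes, so \eqref{Def4.} holds trivially; likewise every summand in \eqref{Def5.} vanishes, so \eqref{Def5.} holds. By the Proposition, $({\rm A},\cdot)$ is therefore an ${\mathcal A}{\rm BD}$-algebra.

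First I would state this one-line implication, citing the characterizing Proposition; no calculation is required beyond observing that a sum of zeros is zero. I should also remark that this genuinely generalizes Corollary \ref{dml}: any anticommutative antiassociative (Koszul dual Jacobi-Jordan) algebra is an ${\mathcal A}{\rm LR}$-algebra — indeed antiassociativity forces ${\mathcal A}(x,y,z)=0$, hence $(x\cdot y)\cdot z = -x\cdot(y\cdot z)$, and combining this with anticommutativity $x\cdot y = -y\cdot x$ yields both $[x,y,z]_{{\mathcal A}{\rm L}}=0$ and $[x,y,z]_{{\mathcal A}{\rm R}}=0$, as already noted in the Example preceding the Proposition. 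Thus the inclusion chain is: Koszul dual Jacobi-Jordan $\subseteq$ ${\mathcal A}{\rm LR}$ $\subseteq$ ${\mathcal A}{\rm BD}$.

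There is no real obstacle here; the entire content has been front-loaded into the trilinear characterization. The only thing to be careful about is making sure the statement of the preceding Proposition is invoked in the correct direction — it is an "if and only if", and we are using the "if" direction: verifying \eqref{Def4.} and \eqref{Def5.} suffices to conclude the ${\mathcal A}{\rm BD}$ property. So the proof reads:

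\begin{proof}
Let $({\rm A},\cdot)$ be an ${\mathcal A}{\rm LR}$-algebra. Then $[x,y,z]_{{\mathcal A}{\rm L}}=0$ and $[x,y,z]_{{\mathcal A}{\rm R}}=0$ for all $x,y,z\in {\rm A}$, so both \eqref{Def4.} and \eqref{Def5.} hold trivially. By the previous Proposition, $({\rm A},\cdot)$ is an ${\mathcal A}{\rm BD}$-algebra.
\end{proof}
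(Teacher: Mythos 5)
Your proof is correct and follows exactly the paper's intended route: the corollary is stated there as an immediate consequence of the preceding Proposition characterizing ${\mathcal A}{\rm BD}$-algebras through the identities \eqref{Def4.} and \eqref{Def5.}, which vanish term by term when $[\cdot,\cdot,\cdot]_{{\mathcal A}{\rm L}}$ and $[\cdot,\cdot,\cdot]_{{\mathcal A}{\rm R}}$ are identically zero. Your added remark on the inclusion chain (Koszul dual Jacobi-Jordan $\subseteq$ ${\mathcal A}{\rm LR}$ $\subseteq$ ${\mathcal A}{\rm BD}$) matches the paper's observation that this generalizes Corollary \ref{dml}.
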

\begin{corollary}
	{ 
		Let $({\rm A},\cdot)$ be an ${\mathcal A}{\rm BD}$-algebra. The following assertions are equivalent:				
	}
\begin{enumerate}
	\item [{\rm (a)}]  $({\rm A},\cdot)$ is an ${\mathcal A}{\rm L}$-algebra$;$ 
	\item[{\rm (b)}]   $({\rm A},\cdot)$ is an ${\mathcal A}{\rm R}$-algebra$;$
	\item[{\rm (c)}]    $({\rm A},\cdot)$ is an ${\mathcal A}{\rm LR}$-algebra$.$
\end{enumerate}
\end{corollary}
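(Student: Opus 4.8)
The plan is to deduce all three equivalences from the characterization of ${\mathcal A}{\rm BD}$-algebras by the pair of identities \eqref{Def4.}--\eqref{Def5.} established in the preceding proposition. The implications (c)$\Rightarrow$(a) and (c)$\Rightarrow$(b) are immediate, since an ${\mathcal A}{\rm LR}$-algebra is by definition simultaneously an ${\mathcal A}{\rm L}$- and an ${\mathcal A}{\rm R}$-algebra. Hence all the content is in proving (a)$\Rightarrow$(c) and (b)$\Rightarrow$(c), after which the cycle (a)$\Leftrightarrow$(b)$\Leftrightarrow$(c) closes.

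For (a)$\Rightarrow$(c), I would assume $[u,v,w]_{{\mathcal A}{\rm L}}=0$ for all $u,v,w\in{\rm A}$. Applying this with the argument triples $(x,y,z)$ and $(x,z,y)$ kills the first two summands of \eqref{Def4.}, so \eqref{Def4.} degenerates to $[x,y,z]_{{\mathcal A}{\rm R}}=0$ for all $x,y,z\in{\rm A}$. Thus $({\rm A},\cdot)$ is also an ${\mathcal A}{\rm R}$-algebra, hence an ${\mathcal A}{\rm LR}$-algebra.

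Symmetrically, for (b)$\Rightarrow$(c) I would assume $[u,v,w]_{{\mathcal A}{\rm R}}=0$ identically; then in \eqref{Def5.} the two ${\mathcal A}{\rm R}$-terms disappear and we are left with $[z,y,x]_{{\mathcal A}{\rm L}}=0$. Since $x,y,z$ range over all of ${\rm A}$, this is precisely the ${\mathcal A}{\rm L}$-identity, so $({\rm A},\cdot)$ is an ${\mathcal A}{\rm LR}$-algebra.

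There is essentially no obstacle: the statement is a formal consequence of \eqref{Def4.}--\eqref{Def5.}, and the only thing to keep in mind is that the ${\mathcal A}{\rm BD}$-hypothesis is genuinely used — it is exactly what supplies those two identities, and without it neither implication survives (an ${\mathcal A}{\rm L}$-algebra need not be an ${\mathcal A}{\rm R}$-algebra in general). One could phrase the same argument through antiassociators via \eqref{Def2.}--\eqref{Def3.}, but routing it through $[\cdot,\cdot,\cdot]_{{\mathcal A}{\rm L}}$ and $[\cdot,\cdot,\cdot]_{{\mathcal A}{\rm R}}$ is the cleanest.
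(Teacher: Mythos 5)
Your proposal is correct and follows exactly the route the paper intends: the corollary is stated as an immediate consequence of the preceding proposition, and your argument — using \eqref{Def4.} to pass from the ${\mathcal A}{\rm L}$-identity to the ${\mathcal A}{\rm R}$-identity and \eqref{Def5.} for the converse, with (c)$\Rightarrow$(a),(b) trivial — is precisely that deduction. Nothing is missing.
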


\begin{definition}[see, \cite{fil,dzh}]
    Let $({\rm A},\cdot)$ be an anticommutative algebra. 
    Then $({\rm A},\cdot)$ is an ${\mathfrak s}_n$-algebra if it satisfies the following identity:

\begin{longtable}{llllll}
${\mathfrak s}_n(\cdot)$ &$:=$&$\sum\limits_{\sigma \in {\mathbb A}_n} \big( \ldots ( x_{\sigma(1)} \cdot x_{\sigma(2)})\cdot x_{\sigma(3)} \ldots\big)\cdot x_{\sigma(n)}$ & $=$ & $0,$
\end{longtable}
\noindent    where ${\mathbb A}_n$ is the group of positive permutations of order $n$.
In particular, $({\rm A},\cdot)$ is a Lie algebra if and only if it is an ${\mathfrak s}_3$-algebra.
\end{definition}

\begin{theorem}\label{s4adm}
		Let $({\rm A},\cdot)$ be an ${\mathcal A}{\rm LR}$-algebra, then
        \begin{enumerate}[I.]
            \item[{\rm (1)}] the derived algebra  ${\rm A}^2$ is associative and commutative.
            \item[{\rm (2)}]    $({\rm A}, [\cdot,\cdot])$ is an ${\mathfrak s}_4$-algebra\footnote{Let us note that each bicommutative algebra under the commutator product gives a metabelian Lie algebra (i.e. an $\mathfrak{s}_3$-algebra) and the derived algebra of each bicommutative algebra is also associative and commutative.}. In particular, 
             $({\rm A}, [\cdot,\cdot])$ is a Lie  algebra if and only if $({\rm A}, \cdot)$ satisfies 
             \begin{center}
             $\sum\limits_{\sigma \in {\mathbb A}_3} \big(x_{\sigma(1)}, \ x_{\sigma(2)},\ x_{\sigma(3)}\big) \ = \ 0.$
             \end{center}
            \item[{\rm (3)}]         $({\rm A},\circ, [\cdot,\cdot])$ is an ${\mathfrak s}_4$-anti-Poisson-Jacobi-Jordan algebra.

        \end{enumerate}
\end{theorem}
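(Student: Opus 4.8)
The plan is to exploit the defining identities of an ${\mathcal A}{\rm LR}$-algebra, namely $[x,y,z]_{{\mathcal A}{\rm L}}=x\cdot(y\cdot z)+y\cdot(x\cdot z)=0$ and $[x,y,z]_{{\mathcal A}{\rm R}}=(x\cdot y)\cdot z+(x\cdot z)\cdot y=0$, which say precisely that ${\rm L}_x$ and ${\rm R}_x$ ``anticommute'' in the two natural senses: ${\rm L}_x{\rm L}_y=-{\rm L}_y{\rm L}_x$ and ${\rm R}_y{\rm R}_z=-{\rm R}_z{\rm R}_y$ when applied appropriately (more precisely ${\rm L}_{x\cdot y}=-{\rm L}_y{\rm L}_x$ is \emph{not} immediate, but $x\cdot(y\cdot z)=-y\cdot(x\cdot z)$ and $(x\cdot y)\cdot z=-(x\cdot z)\cdot y$ are). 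For part (1), I would first show $(a\cdot b)\cdot c$ is totally controlled: using ${\mathcal A}{\rm R}$ the product $(u\cdot v)\cdot w$ is skew in $v,w$, and then combine with ${\mathcal A}{\rm L}$ applied to elements of the form $u\cdot v$ to force all triple products with two factors already in ${\rm A}^2$ to collapse appropriately; concretely one shows that for any products the associator $(a,b,c)$ vanishes and the commutator $[a,b]$ vanishes when $a,b\in {\rm A}^2$. The cleanest route: show $(x\cdot y)\cdot z = -(z\cdot y)\cdot x$ (antiflexibility-type identity, coming from iterating ${\mathcal A}{\rm R}$) and $x\cdot(y\cdot z)=-(z\cdot y)\cdot x$ type relations, deduce $(x\cdot y)\cdot(z\cdot t) = -(z\cdot y)\cdot(x\cdot t) = \ldots$, run the ``medial''-style cycle of sign changes from the displayed computations earlier in the paper, and conclude that any product of four or more elements is symmetric enough to be both associative and commutative on ${\rm A}^2$; so ${\rm A}^2$ is associative commutative.

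For part (2), I would pass to the commutator algebra $({\rm A},[\cdot,\cdot])$ and expand ${\mathfrak s}_4([\cdot,\cdot])=\sum_{\sigma\in {\mathbb A}_4}[[[x_{\sigma(1)},x_{\sigma(2)}],x_{\sigma(3)}],x_{\sigma(4)}]$. The key observation is that by Corollary \ref{corALR} every ${\mathcal A}{\rm LR}$-algebra is an ${\mathcal A}{\rm BD}$-algebra, hence Jacobi-Jordan-admissible, so $({\rm A},\circ)$ satisfies the Jacobi identity; combined with part (1) this means all the ``$\circ$-heavy'' terms appearing after polarizing the bracket expansion vanish. Expanding each $[[[a,b],c],d]$ via $[u,v]=u\cdot v-v\cdot u$ produces $2^3=8$ signed length-four dot-monomials, and summing over the $12$ even permutations: the terms where two of the innermost factors get multiplied first are governed by the ${\mathcal A}{\rm R}$ identity, the terms built outward are governed by ${\mathcal A}{\rm L}$, and the mixed ones cancel in pairs using the antiflexibility relations from part (1). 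I expect the bookkeeping to reduce ${\mathfrak s}_4([\cdot,\cdot])$ to a multiple of the full symmetrization $\sum_{\sigma\in S_4}((x_{\sigma(1)}\cdot x_{\sigma(2)})\cdot x_{\sigma(3)})\cdot x_{\sigma(4)}$, which vanishes because ${\rm A}^2$ is commutative associative and $x^3=0$ linearizes to kill it. For the ``in particular'' clause, one tracks which terms survive when passing from ${\mathfrak s}_4$ to ${\mathfrak s}_3$: the failure of the Jacobi identity for $[\cdot,\cdot]$ is exactly $\sum_{\sigma\in{\mathbb A}_3}(x_{\sigma(1)},x_{\sigma(2)},x_{\sigma(3)})$ up to sign, since the symmetric part of the associator is already constrained by Jacobi-Jordan-admissibility, leaving only the alternating-in-${\mathbb A}_3$ part of the associator as the obstruction.

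Part (3) is then essentially a matter of assembling: by Corollary \ref{corALR} and Proposition \ref{pr2}, $[\cdot,\cdot]$ is a skew-symmetric anti-biderivation of $({\rm A},\circ)$, i.e. the anti-Leibniz compatibility $[x,y\circ z]=-([x,y]\circ z+y\circ[x,z])$ holds; $({\rm A},\circ)$ is Jacobi-Jordan by admissibility; and $({\rm A},[\cdot,\cdot])$ is an ${\mathfrak s}_4$-algebra by part (2). Together these three facts are precisely the axioms of an ${\mathfrak s}_4$-anti-Poisson-Jacobi-Jordan algebra, so there is nothing further to prove once the definition is unwound.

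The main obstacle is part (2): controlling the twelve-term alternating sum of length-four bracket monomials and verifying that the ${\mathcal A}{\rm L}$, ${\mathcal A}{\rm R}$ identities together with the commutativity/associativity of ${\rm A}^2$ are exactly enough to annihilate it. The risk is that a naive expansion leaves a residual term; the remedy is to organize the computation by first reducing every length-four monomial to the normal form $((a\cdot b)\cdot c)\cdot d$ using the two ${\mathcal A}{\rm LR}$ identities (which is legitimate because any triple subproduct can be reassociated up to sign), and only then symmetrize over ${\mathbb A}_4$, at which point part (1) finishes it.
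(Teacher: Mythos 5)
Your overall architecture (part (1) by juggling the two defining identities, part (2) by expanding ${\mathfrak s}_4$ of the commutator and cancelling, part (3) by assembling Corollary \ref{corALR}, Proposition \ref{pr2} and part (2)) is the same as the paper's, and part (3) of your plan is exactly the paper's argument. But the plan contains a concrete false step. The degree-$3$ identities you propose as the ``cleanest route'' for part (1), namely $(x\cdot y)\cdot z=-(z\cdot y)\cdot x$ and $x\cdot(y\cdot z)=-(z\cdot y)\cdot x$, are \emph{not} consequences of the ${\mathcal A}{\rm LR}$ axioms: the identities $[x,y,z]_{{\mathcal A}{\rm L}}=0$ and $[x,y,z]_{{\mathcal A}{\rm R}}=0$ only permute arguments inside a fixed bracketing (they swap the last two factors of a left-normed word, or the first two factors of a right-normed word, up to sign); they never exchange the two outermost arguments and they never change the association type. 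In the multilinear degree-$3$ component of the free ${\mathcal A}{\rm LR}$-algebra the words $(a\cdot b)\cdot c$ and $(c\cdot b)\cdot a$ are linearly independent, so ``iterating ${\mathcal A}{\rm R}$'' cannot produce your identity; in the paper these relations appear only under the much stronger hypothesis of a symmetric pre-Jacobi-Jordan ${\mathcal A}{\rm BD}$-algebra, as \eqref{Remark3} and \eqref{Remark2}. The degree-$4$ facts you actually want for (1), such as $(x\cdot y)\cdot(z\cdot t)=-(z\cdot y)\cdot(x\cdot t)=(z\cdot t)\cdot(x\cdot y)$, are true, but they must be obtained directly by alternating the two ${\mathcal A}{\rm LR}$ identities (as the paper does), not routed through the false degree-$3$ relations.

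The same misconception undermines your remedy for what you correctly identify as the main obstacle, part (2). You cannot ``reduce every length-four monomial to the normal form $((a\cdot b)\cdot c)\cdot d$'': since the ${\mathcal A}{\rm LR}$ identities allow no reassociation whatsoever, monomials of the shapes $(z\cdot(x\cdot y))\cdot w$ and $w\cdot(z\cdot(x\cdot y))$, which do occur when you expand $\big[[[x_{\sigma(1)},x_{\sigma(2)}],x_{\sigma(3)}],x_{\sigma(4)}\big]$, are not expressible as left-normed words in the free ${\mathcal A}{\rm LR}$-algebra. Hence the claimed reduction of ${\mathfrak s}_4([\cdot,\cdot])$ to a multiple of the $S_4$-symmetrization of left-normed monomials is unsupported, and part (2) is left unproved; the paper instead expands each of the twelve bracket monomials into its eight dot-monomials and cancels the ${\mathbb A}_4$-sum term by term, using each identity only within its own bracketing type. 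Two smaller inaccuracies: the ``in particular'' clause does not follow from Jacobi-Jordan-admissibility constraining ``the symmetric part of the associator'' (admissibility is a condition on antiassociators); what one actually uses is that ${\mathcal A}{\rm R}$ (resp.\ ${\mathcal A}{\rm L}$) kills the $S_3$-symmetrization of left-normed (resp.\ right-normed) words, so the Jacobiator of $[\cdot,\cdot]$, which in any algebra is the signed $S_3$-sum of associators, collapses to twice the cyclic sum $\sum_{\sigma\in{\mathbb A}_3}(x_{\sigma(1)},x_{\sigma(2)},x_{\sigma(3)})$. Likewise, no ``$\circ$-heavy terms'' arise in the expansion of ${\mathfrak s}_4([\cdot,\cdot])$, so invoking the Jacobi identity for $\circ$ there is beside the point.
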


\begin{proof}
First, we note the following relations are true in  $({\rm A},\cdot):$
\begin{longtable}{llllllllllllllll}
$(x\cdot y)\cdot (z\cdot t)$ &$=$&$ -\  \big(x\cdot (z\cdot t)\big)\cdot y$ &$=$& $\big(z\cdot (x\cdot t)\big)\cdot y$ &$=$&
$-(z\cdot y)\cdot (x\cdot t)$& $=$ \\
&&& $=$ &$x\cdot \big((z\cdot y)\cdot t\big)$ &$=$& $-x\cdot \big((z\cdot t)\cdot y\big)$ &$=$& $(z\cdot t)\cdot (x\cdot y),$
\end{longtable}
\noindent  and 
\begin{longtable}{llllllllllllllll}
$\big((x\cdot y)\cdot (z\cdot t)\big)\cdot (v\cdot w)$ &$=$&$
-\ \big(z\cdot ((x\cdot y)\cdot t)\big)\cdot (v\cdot w)$ &$=$&$
\big(z\cdot (v\cdot w)\big)\cdot \big((x\cdot y)\cdot t\big)$ \\
&$=$&$
-\ (x\cdot y)\cdot \big((z\cdot (v\cdot w))\cdot t\big)$ &$=$&$
(x\cdot y)\cdot \big((z\cdot t)\cdot (v\cdot w)\big).$
\end{longtable} 
\noindent Summarizing, the derived algebra ${\rm A}^2$ is associative and commutative.

Second, 
\begin{longtable}{llllllllllll}
${\mathfrak s}_3([\cdot,\cdot])$ & $=$&
\multicolumn{5}{l}{$\sum\limits_{\sigma \in {\mathbb A}_3} \big[[x_{\sigma(1)}, x_{\sigma(2)}], x_{\sigma(3)}\big] \ =$}\\
& $=$ & \multicolumn{5}{l}{$(x_1\cdot x_2)\cdot x_3-(x_2\cdot x_1)\cdot x_3 - x_3\cdot (x_1\cdot x_2)+x_3\cdot (x_2\cdot x_1) +$}\\
&   & \multicolumn{5}{l}{$(x_2\cdot x_3)\cdot x_1-(x_3\cdot x_2)\cdot x_1 - x_1\cdot (x_2\cdot x_3)+x_1\cdot (x_3\cdot x_2) +$}\\
&   & \multicolumn{5}{l}{$(x_3\cdot x_1)\cdot x_2-(x_1\cdot x_3)\cdot x_2 - x_2\cdot (x_3\cdot x_1)+x_2\cdot (x_1\cdot x_3) \ = $}\\
&  $=$ & \multicolumn{5}{l}{$2 \big( (x_1\cdot x_2)\cdot x_3-x_3\cdot (x_1\cdot x_2) + 
 (x_2\cdot x_3)\cdot x_1-$}\\ 
 \multicolumn{6}{r}{$x_1\cdot (x_2\cdot x_3) +
  (x_3\cdot x_1)\cdot x_2-x_2\cdot (x_3\cdot x_1) \big) \ = $}\\
&   & \multicolumn{5}{r}{$2 \big( (x_1,x_2,x_3)  +  (x_2,x_3,x_1)+(x_3,x_1,x_2) \big);$}\\

${\mathfrak s}_4([\cdot,\cdot]) $ & $=$&
\multicolumn{5}{l}{$\sum\limits_{\sigma \in {\mathbb A}_4} 
\Big[\big[[x_{\sigma(1)}, x_{\sigma(2)}], x_{\sigma(3)}\big], x_{\sigma(4)}\Big]\ =$}\\
& $=$&
$\sum\limits_{\sigma \in {\mathbb A}_4} \Big($&
$\big((x_{\sigma(1)}\cdot x_{\sigma(2)})\cdot x_{\sigma(3)}\big)\cdot x_{\sigma(4)} - 
\big((x_{\sigma(2)}\cdot x_{\sigma(1)})\cdot x_{\sigma(3)}\big)\cdot x_{\sigma(4)}+$\\
& &&
$\big(x_{\sigma(3)}\cdot (x_{\sigma(2)}\cdot x_{\sigma(1)})\big)\cdot x_{\sigma(4)}
-\big(x_{\sigma(3)}\cdot (x_{\sigma(1)}\cdot x_{\sigma(2)})\big)\cdot x_{\sigma(4)}+$\\
& &&
$\big(x_{\sigma(4)}\cdot (x_{\sigma(2)}\cdot x_{\sigma(1)})\big)\cdot x_{\sigma(3)}
-\big(x_{\sigma(4)}\cdot (x_{\sigma(1)}\cdot x_{\sigma(2)})\big)\cdot x_{\sigma(3)}+$\\
& &&
$x_{\sigma(4)}\cdot \big(x_{\sigma(3)}\cdot (x_{\sigma(1)}\cdot x_{\sigma(2)})\big)
-x_{\sigma(4)}\cdot \big(x_{\sigma(3)}\cdot (x_{\sigma(2)}\cdot x_{\sigma(1)})\big)$ &$\Big)$ & $=$& $0.$\\
\end{longtable}
Third, 
summarizing the second part of the present Theorem, 
 Corollary \ref{corALR} and Proposition \ref{pr2}, 
 we have that $({\rm A},\circ, [\cdot,\cdot])$ is an ${\mathfrak s}_4$-anti-Poisson-Jacobi-Jordan algebra.
\end{proof} 

\begin{proposition}
    Let $\mathcal{A}{\rm R}$ be the operad governed by the variety of $\mathcal{A}{\rm R}$-algebras. Then,  the dual operad $\mathcal{A}{\rm R}^!$ governed by the variety of $2$-step left nilpotent $\mathcal{A}{\rm R}$-algebras.
  
\end{proposition}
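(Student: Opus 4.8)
The plan is to compute the Koszul dual $\mathcal{A}{\rm R}^{!}$ directly from the quadratic presentation of $\mathcal{A}{\rm R}$. First I would record that $\mathcal{A}{\rm R}=\mathcal{P}(E,R)$, where $E$ is the regular $\mathbb{S}_{2}$-module spanned by a single binary operation $x\cdot y$ carrying no symmetry, and $R\subseteq\mathcal{F}(E)(3)$ is the $\mathbb{S}_{3}$-submodule generated by the relator $[x,y,z]_{{\mathcal A}{\rm R}}=(x\cdot y)\cdot z+(x\cdot z)\cdot y$. The space $\mathcal{F}(E)(3)$ is $12$-dimensional and decomposes as the direct sum of the left-comb summand, spanned by the monomials $(x_{\sigma(1)}\cdot x_{\sigma(2)})\cdot x_{\sigma(3)}$ with $\sigma\in\mathbb{S}_{3}$, and the right-comb summand, spanned by the $x_{\sigma(1)}\cdot(x_{\sigma(2)}\cdot x_{\sigma(3)})$. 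Since $[x,y,z]_{{\mathcal A}{\rm R}}$ is symmetric in $y,z$, its $\mathbb{S}_{3}$-orbit consists of the three elements $[x,y,z]_{{\mathcal A}{\rm R}},\ [y,x,z]_{{\mathcal A}{\rm R}},\ [z,x,y]_{{\mathcal A}{\rm R}}$; these lie in the left-comb summand and involve pairwise disjoint sets of monomials, so $R$ is $3$-dimensional and $\dim\mathcal{A}{\rm R}(3)=9$.

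Next I would use that $\mathcal{A}{\rm R}^{!}=\mathcal{P}(E^{\vee},R^{\perp})$. Since $E^{\vee}$ is again the regular $\mathbb{S}_{2}$-module, $\mathcal{A}{\rm R}^{!}$ is governed by a single binary operation $x\star y$ with no symmetry, and $R^{\perp}$ is the annihilator of $R$ for the canonical pairing $\mathcal{F}(E)(3)\otimes\mathcal{F}(E^{\vee})(3)\to\mathbb{C}$ used to define the quadratic dual. I rely on two properties of this pairing: it vanishes between the left-comb and right-comb summands, and on each of these two summands, written in the monomial basis indexed by $\mathbb{S}_{3}$, it is diagonal with $\sigma$-entry $\pm\,\mathrm{sgn}(\sigma)$ (one fixed sign for each summand). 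The first property forces $R^{\perp}$ to contain the whole right-comb summand of $\mathcal{F}(E^{\vee})(3)$, which is precisely the identity $x\star(y\star z)=0$; that is, every algebra governed by $\mathcal{A}{\rm R}^{!}$ is $2$-step left nilpotent.

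It remains to pin down $R^{\perp}$ inside the left-comb summand. Each generator $(x_{i}\cdot x_{j})\cdot x_{k}+(x_{i}\cdot x_{k})\cdot x_{j}$ of $R$ is a sum of two monomials whose indexing permutations differ by the transposition $(j\,k)$, so by the sign rule its two nonzero coordinates pair with opposite signs and the generator is isotropic; as the three generators have disjoint monomial supports, $R$ is a totally isotropic subspace of the $6$-dimensional left-comb of dimension $3$, hence Lagrangian. Therefore, under the identification of the generator of $\mathcal{A}{\rm R}$ with that of $\mathcal{A}{\rm R}^{!}$, the orthogonal complement of $R$ inside the left-comb of $\mathcal{F}(E^{\vee})(3)$ is $R$ itself, i.e.\ the identity $(x\star y)\star z+(x\star z)\star y=0$, which says exactly that $({\rm A},\star)$ is an $\mathcal{A}{\rm R}$-algebra. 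Combining the two contributions, $R^{\perp}$ is generated by $x\star(y\star z)=0$ together with $(x\star y)\star z+(x\star z)\star y=0$, so $\mathcal{A}{\rm R}^{!}$ governs precisely the variety of $2$-step left nilpotent $\mathcal{A}{\rm R}$-algebras (and $\dim\mathcal{A}{\rm R}^{!}(3)=12-(6+3)=3$, consistent with $\dim\mathcal{A}{\rm R}(3)+\dim\mathcal{A}{\rm R}^{!}(3)=12$). The one point demanding real care is fixing the sign normalizations in the pairing so that the isotropy claim is rigorous; once the pairing is set up correctly, everything else is a short linear-algebra verification.
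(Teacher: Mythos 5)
Your argument is correct, but it follows a genuinely different route from the paper. You compute the quadratic dual directly from the presentation: you identify $R\subseteq\mathcal{F}(E)(3)$ as the $3$-dimensional $\mathbb{S}_3$-span of $(x\cdot y)\cdot z+(x\cdot z)\cdot y$, observe that it sits entirely in the left-comb summand, and take its annihilator under the Ginzburg--Kapranov pairing; the block-diagonality of the pairing forces the whole right-comb summand $x\star(y\star z)=0$ into $R^{\perp}$, and your isotropy/Lagrangian observation (the two monomials of each generator differ by a transposition, hence pair with opposite signs) pins the left-comb part of $R^{\perp}$ down to the $\mathcal{A}{\rm R}$-relation itself. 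The paper instead follows the other standard incarnation of the GK duality: it fixes a basis of the degree-$3$ multilinear part of the free $\mathcal{A}{\rm R}$-algebra, rewrites the three non-basis monomials, expands the Jacobi sum for the commutator on $A\otimes B$, and reads off the identities $(x\bullet y)\bullet z+(x\bullet z)\bullet y=0$ and $x\bullet(y\bullet z)=0$ from the Lie-admissibility condition. Your version is shorter and makes the dimension count ($9+3=12$) and the reason the left-comb relation is self-reproducing completely transparent, but it does hinge on setting the sign conventions of the pairing correctly (as you note, only the fact that the pairing is block-diagonal with entries $\pm\,\mathrm{sgn}(\sigma)$, one sign per summand, is actually used, so the conclusion is insensitive to the choice); the paper's computation avoids any explicit pairing conventions and directly exhibits the tensor-product Lie-admissibility that is the property actually exploited later (e.g.\ in the Dong-property corollary), at the cost of a longer brute-force expansion.
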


\begin{proof}
    Let us construct multilinear base elements of degree $3$ for the free $\mathcal{A}{\rm R}$-algebra. Below, we give a presentation of 3 non-base elements of degree 3 as a linear
combination of the base elements of degree 3:

\begin{longtable}{rcl}
$(a\circ c)\circ b \ =\ -\  (a\circ b)\circ c,$& $\quad$&$  
(b\circ c)\circ a \ =\ - \ (b\circ a)\circ c,\quad   \quad \quad 
(c\circ b)\circ a \ =\ - \ (c\circ a)\circ b.$\\
\end{longtable}

Following the approach in \cite{GK94}, we compute the dual operad $\mathcal{A}{\rm R}^!$, where the operad $\mathcal{A}{\rm R}$ governs the variety of $\mathcal{A}{\rm R}$-algebras. Then,
 
\begin{longtable}{rcl}
$[[a \otimes x, b \otimes y], c \otimes z] $&$+$&$ [[b \otimes y, c \otimes z], a \otimes x] \ +\  [[c \otimes z, a \otimes x], b \otimes y]$ \\
&$=$&  $\big((a \circ b) \circ c\big) \otimes \big((x \bullet y) \bullet z\big) - \big(c \circ (a \circ b)\big) \otimes (z \bullet (x \bullet y)\big)$ \\
&&$\quad - \big((b \circ a) \circ c\big) \otimes \big((y \bullet x) \bullet z\big) + \big(c \circ (b \circ a)\big) \otimes \big(z \bullet (y \bullet x)\big)$ \\
&&$\quad + \big((b \circ c) \circ a\big) \otimes \big((y \bullet z) \bullet x\big) - \big(a \circ (b \circ c)\big) \otimes \big(x \bullet (y \bullet z)\big)$ \\
&&$\quad - \big((c \circ b) \circ a\big) \otimes \big((z \bullet y) \bullet x\big) + \big(a \circ (c \circ b)\big) \otimes \big(x \bullet (z \bullet y)\big)$ \\
&&$\quad + \big((c \circ a) \circ b\big) \otimes \big((z \bullet x) \bullet y\big) - \big(b \circ (c \circ a)\big) \otimes \big(y \bullet (z \bullet x)\big)$ \\
&&$\quad - \big((a \circ c) \circ b\big) \otimes \big((x \bullet z) \bullet y\big) + \big(b \circ (a \circ c)\big) \otimes \big(y \bullet (x \bullet z)\big)\ =$\\

\multicolumn{3}{l}{$\mbox{
\big(
by using the above presentation of the $3$ non-basis elements of degree $3$}$}\\ 
\multicolumn{3}{r}{$\mbox{
as a linear combination of basis elements, we have \big)}$}\\

\multicolumn{3}{l}{$= \ \big((a \circ b) \circ c\big) \otimes \big((x \bullet y) \bullet z +(x \bullet z) \bullet y\big) 
-\big((b \circ a) \circ c\big) \otimes \big((y \bullet x) \bullet z+(y \bullet z) \bullet x\big)+
$}\\
\multicolumn{3}{l}{$\quad\big((c \circ a) \circ b\big) \otimes \big((z \bullet y) \bullet x+(z \bullet x) \bullet y\big)+ $}\\
\multicolumn{3}{l}{$\quad \big(c \circ (b \circ a)\big) \otimes \big(z \bullet (y \bullet x)\big) - \big(c \circ (a \circ b)\big) \otimes \big(z \bullet (x \bullet y)\big)  - \big(a \circ (b \circ c)\big) \otimes \big(x \bullet (y \bullet z)\big)+$}\\
\multicolumn{3}{l}{$\quad\big(a \circ (c \circ b)\big) \otimes \big(x \bullet (z \bullet y)\big)-\big(b \circ (c \circ a)\big) \otimes \big(y \bullet (z \bullet x)\big)+\big(b \circ (a \circ c)\big) \otimes \big(y \bullet (x \bullet z)\big).$}

\end{longtable}
Therefore, the Lie-admissibility condition gives us defining identities for the dual operad $\mathcal{A}{\rm R}^!$, which is equivalent to the following:
\begin{longtable}{rclrcl}
$(x \bullet y) \bullet z +(x \bullet z) \bullet y$&$=$&$
0,$ & 
$x \bullet (y \bullet z) $&$=$&$ 0.$
\end{longtable}
\end{proof}

\begin{proposition}\label{selfdual}
    Let $\mathcal{A}{\rm LR}$ be the operad governed by the variety of $\mathcal{A}{\rm LR}$-algebras. Then,  the dual operad $\mathcal{A}{\rm LR}^!$ governed by the variety of $\mathcal{A}{\rm LR}$-algebras.
  
\end{proposition}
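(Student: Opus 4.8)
The plan is to treat $\mathcal{A}{\rm LR}$ as a binary quadratic operad generated by a single operation ``$\cdot$'' carrying no symmetry, and to pin down its Koszul dual by bookkeeping with relation spaces rather than by a direct computation. Let $\mathcal{F}$ be the corresponding free operad; then $\mathcal{F}(3)$ is $12$-dimensional, and its basis splits into the ``left-comb'' part $\{(x_{\sigma(1)}\cdot x_{\sigma(2)})\cdot x_{\sigma(3)}\}_{\sigma\in S_3}$ and the ``right-comb'' part $\{x_{\sigma(1)}\cdot(x_{\sigma(2)}\cdot x_{\sigma(3)})\}_{\sigma\in S_3}$. The defining relations split accordingly as $R_{\mathcal{A}{\rm LR}}=R_{\mathcal{A}{\rm L}}\oplus R_{\mathcal{A}{\rm R}}$, where $R_{\mathcal{A}{\rm L}}$ is the $3$-dimensional span of the $[x,y,z]_{\mathcal{A}{\rm L}}$ inside the right-comb part and $R_{\mathcal{A}{\rm R}}$ the $3$-dimensional span of the $[x,y,z]_{\mathcal{A}{\rm R}}$ inside the left-comb part. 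Hence $\dim\mathcal{A}{\rm LR}(3)=6$; and since tensoring the generator with the sign representation of $S_2$ returns a single binary operation with no symmetry, the dual operad $\mathcal{A}{\rm LR}^!$ is again generated by one such operation ``$\bullet$'' with $\dim\mathcal{A}{\rm LR}^!(3)=12-6=6$. So it suffices to show that the relation space $R_{\mathcal{A}{\rm LR}}^{\perp}$ of $\mathcal{A}{\rm LR}^!$ coincides, under the canonical identification of the two copies of $\mathcal{F}(3)$, with $R_{\mathcal{A}{\rm LR}}$.

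Since $R_{\mathcal{A}{\rm LR}}=R_{\mathcal{A}{\rm L}}+R_{\mathcal{A}{\rm R}}$, linear algebra gives $R_{\mathcal{A}{\rm LR}}^{\perp}=R_{\mathcal{A}{\rm L}}^{\perp}\cap R_{\mathcal{A}{\rm R}}^{\perp}$; that is, the relation space of $\mathcal{A}{\rm LR}^!$ is the intersection of those of $\mathcal{A}{\rm L}^!$ and $\mathcal{A}{\rm R}^!$. The previous proposition supplies $\mathcal{A}{\rm R}^!$: it is governed by $(x\bullet y)\bullet z+(x\bullet z)\bullet y=0$ and $x\bullet(y\bullet z)=0$, so $R_{\mathcal{A}{\rm R}}^{\perp}$ is the direct sum of the full right-comb part of $\mathcal{F}(3)$ and the $3$-dimensional anti-right-commutative subspace of the left-comb part. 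Applying the opposite-algebra anti-isomorphism---which exchanges $\mathcal{A}{\rm L}$- and $\mathcal{A}{\rm R}$-algebras, exchanges $2$-step left and right nilpotency, and is compatible with Koszul duality in the sense that $(\mathcal{P}^{\mathrm{op}})^!=(\mathcal{P}^!)^{\mathrm{op}}$---yields the mirror statement: $\mathcal{A}{\rm L}^!$ is governed by $x\bullet(y\bullet z)+y\bullet(x\bullet z)=0$ and $(x\bullet y)\bullet z=0$, so $R_{\mathcal{A}{\rm L}}^{\perp}$ is the direct sum of the full left-comb part and the $3$-dimensional anti-left-commutative subspace of the right-comb part. (Alternatively, $\mathcal{A}{\rm L}^!$ can be obtained directly by rerunning the Jacobiator/Lie-admissibility computation of the previous proposition with $\mathcal{A}{\rm L}$ in place of $\mathcal{A}{\rm R}$.)

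Both $R_{\mathcal{A}{\rm L}}^{\perp}$ and $R_{\mathcal{A}{\rm R}}^{\perp}$ are adapted to the left-comb/right-comb decomposition of $\mathcal{F}(3)$, hence so is their intersection: on the left-comb part it is $(\text{anti-right-commutative subspace})\cap(\text{everything})=$ the anti-right-commutative subspace, and on the right-comb part it is $(\text{everything})\cap(\text{anti-left-commutative subspace})=$ the anti-left-commutative subspace. Thus $R_{\mathcal{A}{\rm LR}}^{\perp}$ is precisely the span, inside $\mathcal{F}(3)$ with operation $\bullet$, of the elements $[x,y,z]_{\mathcal{A}{\rm R}}$ and $[x,y,z]_{\mathcal{A}{\rm L}}$; i.e. $R_{\mathcal{A}{\rm LR}}^{\perp}=R_{\mathcal{A}{\rm LR}}$. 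Consequently $\mathcal{A}{\rm LR}^!$ and $\mathcal{A}{\rm LR}$ share the same generator and the same quadratic relations, so $\mathcal{A}{\rm LR}^!\cong\mathcal{A}{\rm LR}$, which is the claim.

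The one step that needs genuine care is the identification of $\mathcal{A}{\rm L}^!$, since only $\mathcal{A}{\rm R}^!$ is recorded above: one has to justify the passage to the mirror variety through opposite operads (and the compatibility of Koszul duality with it), or else redo the short Lie-admissibility argument by hand. After that, the proof reduces to the identity $(U+V)^{\perp}=U^{\perp}\cap V^{\perp}$, the observation that the ``nilpotency'' half of $R_{\mathcal{A}{\rm L}}^{\perp}$ (resp.\ $R_{\mathcal{A}{\rm R}}^{\perp}$) fills the entire left-comb (resp.\ right-comb) part of $\mathcal{F}(3)$, and the dimension count $\dim\mathcal{A}{\rm LR}(3)=\dim\mathcal{A}{\rm LR}^!(3)=6$ that makes the containment of relation spaces an equality.
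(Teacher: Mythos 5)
Your proposal is correct, but it follows a genuinely different route from the paper. The paper proves self-duality by redoing the Ginzburg--Kapranov computation from scratch for $\mathcal{A}{\rm LR}$: it fixes the six basis monomials of degree $3$, rewrites the six non-basis monomials via the defining relations, expands the Jacobi sum $\big[[a\otimes x,b\otimes y],c\otimes z\big]+\dots$ in the tensor product, and reads off that Lie-admissibility forces exactly the identities $(x\bullet y)\bullet z+(x\bullet z)\bullet y=0$ and $x\bullet(y\bullet z)+y\bullet(x\bullet z)=0$ on the second factor. You instead work purely with relation spaces: $R_{\mathcal{A}{\rm LR}}=R_{\mathcal{A}{\rm L}}\oplus R_{\mathcal{A}{\rm R}}$ split across the right-comb/left-comb decomposition of the $12$-dimensional weight-$3$ component, the identity $(U+V)^{\perp}=U^{\perp}\cap V^{\perp}$, the already-established description of $R_{\mathcal{A}{\rm R}}^{\perp}$ from the preceding proposition (full right-comb part plus the anti-right-commutative span), its mirror for $R_{\mathcal{A}{\rm L}}^{\perp}$, and the observation that intersecting two subspaces graded by the comb decomposition recovers precisely $R_{\mathcal{A}{\rm LR}}$. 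This is sound: the annihilator identity and the graded-intersection step are elementary, and the dimension bookkeeping ($6+6=12$, $\dim\mathcal{A}{\rm LR}^!(3)=6$) is consistent. What your approach buys is a structural explanation of the self-duality (dualization swaps ``full comb component'' with ``$3$-dimensional relation span'' in each comb) and the avoidance of a second tensor-product expansion; what the paper's approach buys is self-containedness, since it needs no auxiliary facts beyond \cite{GK94}. The only point you must still discharge --- and you correctly flag it --- is the identification of $\mathcal{A}{\rm L}^!$: either justify the compatibility $(\mathcal{P}^{\mathrm{op}})^{!}\cong(\mathcal{P}^{!})^{\mathrm{op}}$ (standard, since the opposite involution is induced by an automorphism of the generating $S_2$-module and preserves annihilators up to a global sign, but it is not proved in the paper), or rerun the short Lie-admissibility computation for $\mathcal{A}{\rm L}$; with that supplied, the argument is complete.
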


\begin{proof}
    Let us construct multilinear base elements of degree $3$ for the free $\mathcal{A}{\rm LR}$-algebra. Below, we give a presentation of $6$ non-base elements of degree $3$ as a linear
combination of the base elements of degree 3:

\begin{longtable}{rcl}
$(a\circ c)\circ b \ =\ -\  (a\circ b)\circ c,$& $\quad$&$  
(b\circ c)\circ a \ =\ - \ (b\circ a)\circ c,\quad   \quad \quad 
(c\circ b)\circ a \ =\ - \ (c\circ a)\circ b,$\\

$b \circ (a\circ c)  \ =\ -\  a\circ (b\circ c),$& $\quad$&$  
a \circ (c\circ b)   \ =\ - \ c \circ (a\circ b),\quad   \quad \quad 
b \circ (c\circ a)  \ =\ - \ c \circ (b\circ a).$\\

\end{longtable}

Following the approach in \cite{GK94}, we compute the dual operad $\mathcal{A}{\rm LR}^!$, where the operad $\mathcal{A}{\rm LR}$ governs the variety of $\mathcal{A}{\rm LR}$-algebras. Then,
 
\begin{longtable}{rcl}
$[[a \otimes x, b \otimes y], c \otimes z] $&$+$&$ [[b \otimes y, c \otimes z], a \otimes x] \ +\  [[c \otimes z, a \otimes x], b \otimes y]$ \\
&$=$&  $\big((a \circ b) \circ c\big) \otimes \big((x \bullet y) \bullet z\big) - \big(c \circ (a \circ b)\big) \otimes \big(z \bullet (x \bullet y)\big)$ \\
&&$\quad - \big((b \circ a) \circ c\big) \otimes \big((y \bullet x) \bullet z\big) + \big(c \circ (b \circ a)\big) \otimes \big(z \bullet (y \bullet x)\big)$ \\
&&$\quad + \big((b \circ c) \circ a\big) \otimes \big((y \bullet z) \bullet x\big) - \big(a \circ (b \circ c)\big) \otimes \big(x \bullet (y \bullet z)\big)$ \\
&&$\quad - \big((c \circ b) \circ a\big) \otimes \big((z \bullet y) \bullet x\big) + \big(a \circ (c \circ b)\big) \otimes \big(x \bullet (z \bullet y)\big)$ \\
&&$\quad + \big((c \circ a) \circ b\big) \otimes \big((z \bullet x) \bullet y\big) - \big(b \circ (c \circ a)\big) \otimes \big(y \bullet (z \bullet x)\big)$ \\
&&$\quad - \big((a \circ c) \circ b\big) \otimes \big((x \bullet z) \bullet y\big) + \big(b \circ (a \circ c)\big) \otimes \big(y \bullet (x \bullet z)\big)\ =$\\

\multicolumn{3}{l}{$\mbox{
\big(
by using the above presentation of the $3$ non-basis elements of degree $3$}$}\\ 
\multicolumn{3}{r}{$\mbox{
as a linear combination of basis elements, we have \big)}$}\\

\multicolumn{3}{l}{$= \ \big((a \circ b) \circ c\big) \otimes \big((x \bullet y) \bullet z +(x \bullet z) \bullet y\big)  - \big(a \circ (b \circ c)\big) \otimes \big(x \bullet (y \bullet z)+y \bullet (x \bullet z)\big)+$}\\
\multicolumn{3}{l}{$\quad  \ \big(c \circ (b \circ a)\big) \otimes \big(z \bullet (y \bullet x)+y \bullet (z \bullet x)\big) -\big((b \circ a) \circ c\big) \otimes \big((y \bullet x) \bullet z+(y \bullet z) \bullet x\big)+$}\\
\multicolumn{3}{l}{$\quad \ 
\big((c \circ a) \circ b\big) \otimes \big((z \bullet y) \bullet x+(z \bullet x) \bullet y\big) - \big(c \circ (a \circ b)\big) \otimes \big(z \bullet (x \bullet y)+x \bullet (z \bullet y)\big). $}\\

\end{longtable}
Therefore, the Lie-admissibility condition gives us defining identities for the dual operad $\mathcal{A}{\rm LR}^!$, which is equivalent to the following:
\begin{longtable}{rclrcl}
$(x \bullet y) \bullet z +(x \bullet z) \bullet y$&$=$&$
0,$ & 
$x \bullet (y \bullet z)  +y \bullet (x \bullet z)$&$=$&$ 0.$
\end{longtable}
\end{proof}

One of the cornerstones of the theory of vertex algebras is the statement known as the Dong
Lemma for quantum fields (see, e.g., \cite{FB}) which goes back to the paper \cite{Li}.
The classical Dong Lemma for distributions over a Lie algebra lies in the foundation
of vertex algebras theory. Kolesnikov and Sartayev found necessary and sufficient condition for a variety
of nonassociative algebras with binary operations to satisfy the analogue of the Dong Lemma \cite{11}.
As it was mentioned in \cite{DS}, an operad ${\rm BiCom}$ governed by the variety of bicommutative $\big($or ${\rm LR}$$\big)$ algebras, satisfies the Dong property in the sense of \cite{11}.
The following corollary states that it is also the case of an operad  $\mathcal{A}{\rm LR}.$

\begin{corollary}
    An operad $\mathcal{A}{\rm LR}$ satisfies the Dong property in the sense of  {\rm \cite{11}}.
\end{corollary}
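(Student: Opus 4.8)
The plan is to invoke the combinatorial criterion for the Dong property established by Kolesnikov and Sartayev in \cite{11}, which characterises exactly when a binary quadratic operad $\mathcal P$ admits a well-defined notion of ``$\mathcal P$-conformal algebra'' — equivalently, when locality of three elements is preserved by the $n$-th products built from the operation of $\mathcal P$. For an operad presented by multilinear relations in which the operation occurs twice, that criterion reduces to checking a fixed finite list of ``closure identities'' in the free $\mathcal P$-algebra, all of bounded (by quadraticity, degree $\le 4$): one substitutes a product $a\circ b$ into the generic relation expressing that $a\circ b$ should remain local with respect to a third element, expands by means of the defining relations of $\mathcal P$, and verifies that the result vanishes. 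So the task is to carry out this bounded-degree identity check for $\mathcal{A}{\rm LR}$.

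To run the check I would first fix a normal form for the free $\mathcal{A}{\rm LR}$-algebra. In degree $3$ this is already produced inside the proof of Proposition~\ref{selfdual}, where six of the twelve degree-$3$ monomials in $a,b,c$ are expressed through the remaining six via the $\mathcal{A}{\rm L}$- and $\mathcal{A}{\rm R}$-identities. In degree $4$ the only monomials that enter the closure identities are, after applying the degree-$3$ rewriting, either iterated left/right products — handled by using those rules twice — or products of two quadratic elements of the shape $(x\circ y)\circ(z\circ t)$; the latter are governed by Theorem~\ref{s4adm}(1), which asserts that $\circ$ is associative and commutative on ${\rm A}^{2}$, so all of them collapse to a single representative. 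This is precisely the structural feature that makes the bicommutative computation of \cite{DS} go through, and it is available here because every $\mathcal{A}{\rm LR}$-algebra is an ${\mathcal A}{\rm BD}$-algebra (Corollary~\ref{corALR}) and its derived algebra is associative and commutative.

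With the normal form in hand, the closure identities for $\mathcal{A}{\rm LR}$ become the ``sign-twisted'' analogue of those verified for ${\rm BiCom}$ in \cite{DS}: one expands the generic $n$-th-product locality relations using \eqref{Def4.} and \eqref{Def5.} together with the associativity and commutativity of ${\rm A}^{2}$, and collects coefficients. The point that needs genuine care — and which I expect to be the main obstacle — is that the defining identities of $\mathcal{A}{\rm LR}$ differ from those of ${\rm BiCom}$ only by signs, so a priori a sign mismatch could leave a nonzero residue in some closure identity and thereby obstruct the Dong property. The resolution I would pursue is that each closure identity is homogeneous of even degree in the operation $\circ$, so that the distinguishing signs occur in matched pairs and cancel in step with the bicommutative cancellations; equivalently, the self-duality isomorphism of Proposition~\ref{selfdual} intertwines the closure construction with its dual, which both halves the number of relations to be examined and makes the sign compatibility manifest. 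Once this compatibility is verified, the criterion of \cite{11} yields at once that the operad $\mathcal{A}{\rm LR}$ has the Dong property.
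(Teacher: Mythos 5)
There is a genuine gap: you have mischaracterized the criterion of \cite{11}, and the step that would actually decide the matter is left unexecuted. The Kolesnikov--Sartayev criterion, as it is used in this paper, is not a list of ``closure identities'' to be verified as vanishing in the free $\mathcal{A}{\rm LR}$-algebra; it is a \emph{linear-independence} condition formulated on the Koszul dual side, namely that the monomials $(ab)c$, $(ba)c$, $c(ab)$, $c(ba)$ remain linearly independent in $\mathcal{A}{\rm LR}^{!}\langle X\rangle$. In other words, what must be checked is the \emph{absence} of extra relations among four specific degree-$3$ monomials in the dual operad, which is essentially the opposite of showing that certain expanded expressions vanish; a verification of the kind you describe would not, even if completed, feed into the criterion. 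The paper's proof is then immediate from Proposition \ref{selfdual}: self-duality identifies $\mathcal{A}{\rm LR}^{!}$ with $\mathcal{A}{\rm LR}$, and the degree-$3$ normal form computed there rewrites only $(ac)b$, $(bc)a$, $(cb)a$, $b(ac)$, $a(cb)$, $b(ca)$, so the four monomials above are part of a basis of the multilinear degree-$3$ component and hence linearly independent; the criterion of \cite{11} then applies in one line. You do have the right ingredients in view (the self-duality and the degree-$3$ rewriting), but you never connect them to the actual statement of the criterion.

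Even taken on its own terms, your proposed computation is not a proof. The decisive point is replaced by the heuristic that ``each closure identity is homogeneous of even degree in the operation, so the distinguishing signs occur in matched pairs and cancel''; this is not an argument, and sign-twisted variants of an operad need not inherit properties of the untwisted one (the whole point of \cite{11} is that the Dong property holds for some binary quadratic operads and fails for others, so no parity-of-signs shortcut can substitute for the criterion). Likewise, the appeal to Theorem \ref{s4adm}(1) only controls monomials built from products of elements of the derived algebra; it does not give a normal form for the full multilinear degree-$4$ component of the free $\mathcal{A}{\rm LR}$-algebra, so the reduction of your ``closure identities'' to a single representative does not go through as stated. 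To repair the argument, drop the degree-$4$ analysis altogether and instead quote the criterion correctly: self-duality (Proposition \ref{selfdual}) plus the linear independence of $(ab)c$, $(ba)c$, $c(ab)$, $c(ba)$ in the free $\mathcal{A}{\rm LR}^{!}$-algebra yields the Dong property directly.
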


\begin{proof}
    An operad  $\mathcal{A}{\rm LR}$  is self-dual and the monomials $(ab)c,$ $(ba)c,$ $c(ab)$ and $c(ba)$ are linearly independent in $\mathcal{A}{\rm LR}^! \langle X\rangle.$ 
    By the criterion given in \cite{11}, the operad $\mathcal{A}{\rm LR}$ satisfies the Dong
property.
\end{proof}

\subsection{$\mathcal{A}$Flexible ${\mathcal A}{\rm BD}$-algebras }

\begin{proposition}[see, \cite{MH}]
	{ 		
Let $({\rm A},\cdot)$ be an algebra. Then $({\rm A},\cdot)$ is flexible if and only if  
\begin{equation}\label{Leibniz3}
 [x,y\circ z]\ =\ [x,y]\circ z+y\circ [x,z].
\end{equation} 
		
	}
\end{proposition}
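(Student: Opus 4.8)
The plan is to prove both implications by reducing the identity \eqref{Leibniz3} to a single relation among associators and then exploiting the action of the symmetric group $S_3$ on the three arguments. Throughout I would write $(a,b,c):=(a\cdot b)\cdot c-a\cdot(b\cdot c)$ for the associator, so that flexibility of $({\rm A},\cdot)$ means $(a,b,c)+(c,b,a)=0$ for all $a,b,c$.

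First I would expand the difference $E(x,y,z):=[x,y\circ z]-[x,y]\circ z-y\circ[x,z]$, using only the definitions $y\circ z=y\cdot z+z\cdot y$ and $[x,y]=x\cdot y-y\cdot x$. This yields twelve triple products, which I would collect into six associators:
\[
E(x,y,z)=-(x,y,z)-(x,z,y)-(z,y,x)-(y,z,x)+(y,x,z)+(z,x,y).
\]
The only care required here is to pair each "left-normed" product with the correct "right-normed" one so that the resulting associators carry the right arguments and signs; this step is routine bookkeeping.

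For the implication "flexible $\Rightarrow$ \eqref{Leibniz3}", I would apply the flexible law $(a,b,c)+(c,b,a)=0$ three times, namely to the orderings $(a,b,c)=(x,y,z)$, $(x,z,y)$ and $(y,x,z)$; this shows that the six associators on the right-hand side of the displayed formula cancel in three pairs, hence $E(x,y,z)\equiv 0$, which is exactly \eqref{Leibniz3}.

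For the converse, the hypothesis is precisely that $E(x,y,z)\equiv 0$; since this single relation looks weaker than the flexible law, the crux of the argument is to recover flexibility from it by symmetrization. I would add the relation $E(x,y,z)=0$ to the relation $E(z,y,x)=0$ obtained by interchanging the first and third arguments: all the "mixed" associators cancel, leaving $-2\big((x,y,z)+(z,y,x)\big)=0$, i.e. the flexible law (here one divides by $2$, which is legitimate over $\mathbb{C}$). This symmetrization is the one genuinely non-mechanical step; everything else is expansion and sign-tracking, so I expect no real obstacle beyond careful bookkeeping.
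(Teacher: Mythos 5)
Your proposal is correct and follows essentially the same route as the paper: both rewrite $[x,y\circ z]-[x,y]\circ z-y\circ[x,z]$ as the associator identity $(x,y,z)+(z,y,x)+(y,z,x)+(x,z,y)=(y,x,z)+(z,x,y)$, from which flexibility gives the identity by pairwise cancellation. Your converse (symmetrizing in $x\leftrightarrow z$ and dividing by $2$) is just the linearized form of the paper's substitution $z=x$, so the two arguments coincide in substance.
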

\begin{proof}
	{ 	
Note that the identity (\ref{Leibniz3})	is equivalent to:
\begin{equation}\label{Leibniz4}
(x , y , z ) + (z , y , x ) + (y , z , x ) + (x , z , y ) \ =\  (y , x , z ) + (z , x , y ).
\end{equation} 
 Then, if $({\rm A},\cdot)$ is flexible, then the identity  \eqref{Leibniz4} holds and then the identity   \eqref{Leibniz3} too. Conversely, the identity (\ref{Leibniz4}) with $z = x$ gives clearly the flexibility of $({\rm A},\cdot)$.	
	}
\end{proof} 
\begin{corollary}
	{ 
Let $({\rm A},\cdot)$ be a flexible Jacobi-Jordan-admissible, then $({\rm A},\cdot)$ is an ${\mathcal A}{\rm BD}$-algebra if and only if: $$[x,y\circ z]=0 \  \big(\ i.e., \  ({\rm A}^{+})^{2}\subseteq {\rm Ann}({\rm A}^{-})\ \big).$$

}
\end{corollary}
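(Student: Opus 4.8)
The plan is to play the two relevant characterizations against each other. Recall from Proposition~\ref{pr2} that, since $({\rm A},\cdot)$ is Jacobi-Jordan-admissible, it is an ${\mathcal A}{\rm BD}$-algebra if and only if \eqref{Eq2} holds, i.e. $[x,y\circ z]=-\big([x,y]\circ z+y\circ[x,z]\big)$; and recall from the preceding proposition that flexibility of $({\rm A},\cdot)$ is equivalent to \eqref{Leibniz3}, namely $[x,y\circ z]=[x,y]\circ z+y\circ[x,z]$. These are the only inputs needed.

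For the "only if" direction I would argue as follows. Suppose $({\rm A},\cdot)$ is a flexible Jacobi-Jordan-admissible ${\mathcal A}{\rm BD}$-algebra. Then both \eqref{Eq2} and \eqref{Leibniz3} are valid, and adding them term by term yields $2\,[x,y\circ z]=0$; since we work over $\mathbb C$ (any field of characteristic $\ne 2$ suffices), this gives $[x,y\circ z]=0$ for all $x,y,z\in{\rm A}$. As $({\rm A}^{+})^{2}$ is spanned by the elements $y\circ z$, this says exactly that every element of $({\rm A}^{+})^{2}$ is annihilated by each operator $[\cdot,x]$, that is $({\rm A}^{+})^{2}\subseteq{\rm Ann}({\rm A}^{-})$.

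Conversely, assume $[x,y\circ z]=0$ for all $x,y,z\in{\rm A}$. Flexibility and \eqref{Leibniz3} give $0=[x,y]\circ z+y\circ[x,z]$; plugging this into the right-hand side of \eqref{Eq2} shows that \eqref{Eq2} reduces to the trivial identity $0=-0$, hence holds. Since $({\rm A},\cdot)$ is Jacobi-Jordan-admissible, Proposition~\ref{pr2} then yields that $({\rm A},\cdot)$ is an ${\mathcal A}{\rm BD}$-algebra.

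There is essentially no obstacle here beyond invoking the two earlier propositions; the only point worth a remark is the division by $2$, which is harmless in the setting of the paper. It may also be worth noting that this corollary is consistent with the earlier computation that symmetric pre-Jacobi-Jordan ${\mathcal A}{\rm BD}$-algebras satisfy $({\rm A}^{+})^{2}\subseteq{\rm Ann}({\rm A}^{-})$, with the flexibility hypothesis here playing the role the pre-Jacobi-Jordan identities played there.
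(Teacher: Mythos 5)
Your proposal is correct and is exactly the intended argument: the paper states this corollary without proof, but it is obtained precisely by combining Proposition~\ref{pr2} (identity \eqref{Eq2}) with the flexibility characterization \eqref{Leibniz3}, which is also how the paper itself later uses these two identities in the proof of Proposition~\ref{cflex}(b) to derive $[x,y\circ z]=0$. Your handling of the converse and of the translation into $({\rm A}^{+})^{2}\subseteq{\rm Ann}({\rm A}^{-})$ is accurate, and the division by $2$ is harmless over $\mathbb C$.
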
 

\begin{proposition}\label{pr3}
	{ 		
		Let $({\rm A},\cdot)$ be an  algebra. 
        Then $({\rm A},\cdot)$ is $\mathcal{A}$flexible if and only if  
		\begin{equation*}\label{Leibniz5}
		[x\circ y, z]+[x,y\circ z]= [y,x]\circ z+[z,y]\circ x.
		\end{equation*} 		
	}
\end{proposition}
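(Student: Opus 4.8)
The plan is to reduce the displayed identity to the defining relation of $\mathcal{A}$flexibility by a direct expansion, with no structural hypothesis needed. First I would substitute $x\circ y=x\cdot y+y\cdot x$ and $[x,y]=x\cdot y-y\cdot x$ into both sides of
$$[x\circ y, z]+[x,y\circ z]\ =\ [y,x]\circ z+[z,y]\circ x .$$
Each side then becomes a signed combination of the eight monomials $(x\cdot y)\cdot z$, $(y\cdot x)\cdot z$, $z\cdot(x\cdot y)$, $z\cdot(y\cdot x)$, $x\cdot(y\cdot z)$, $x\cdot(z\cdot y)$, $(y\cdot z)\cdot x$, $(z\cdot y)\cdot x$; for instance the left-hand side equals
$$(x\cdot y)\cdot z+(y\cdot x)\cdot z-z\cdot(x\cdot y)-z\cdot(y\cdot x)+x\cdot(y\cdot z)+x\cdot(z\cdot y)-(y\cdot z)\cdot x-(z\cdot y)\cdot x ,$$
and an analogous expansion applies to the right-hand side.

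Next I would compute the difference of the two sides monomial by monomial. The four monomials $(y\cdot x)\cdot z$, $z\cdot(x\cdot y)$, $x\cdot(z\cdot y)$ and $(y\cdot z)\cdot x$ occur with equal coefficient on both sides and cancel, while the remaining four combine into
$$2\big((x\cdot y)\cdot z+x\cdot(y\cdot z)\big)-2\big((z\cdot y)\cdot x+z\cdot(y\cdot x)\big)\ =\ 2\big({\mathcal A}(x,y,z)-{\mathcal A}(z,y,x)\big).$$
Hence the displayed identity holds for all $x,y,z\in {\rm A}$ if and only if ${\mathcal A}(x,y,z)={\mathcal A}(z,y,x)$ for all $x,y,z\in {\rm A}$, which is exactly the definition of an $\mathcal{A}$flexible algebra. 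Over a field of characteristic $\neq 2$, as here, the factor $2$ is harmless.

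Unlike the flexible case treated just above — where one passes from the polarized identity \eqref{Leibniz4} to flexibility only after the specialization $z=x$ — here the surviving expression is already the antiassociator symmetry in its unrestricted form, so no specialization is required. The only point demanding attention is the sign bookkeeping over the eight monomials; I do not anticipate any genuine obstacle.
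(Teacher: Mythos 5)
Your proof is correct, and it is essentially the same argument as the paper's: the paper starts from ${\mathcal A}(x,y,z)={\mathcal A}(z,y,x)$ and polarizes each product into $[\cdot,\cdot]$ and $\circ$ parts, while you run the identical expansion in the opposite direction, rewriting the bracket/circle identity in terms of $\cdot$ and finding that the difference of the two sides is $2\big({\mathcal A}(x,y,z)-{\mathcal A}(z,y,x)\big)$. Your sign bookkeeping and the cancellation of the four monomials check out, and the observation that no specialization is needed (in contrast with the flexible case) is accurate.
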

\begin{proof}The proof follows from the consideration of the following equivalent identities.
\begin{longtable}{rcl}
	$(x\cdot y)\cdot z+x\cdot (y\cdot z)$&$=$&$(z\cdot y)\cdot x+z\cdot (y\cdot x)$\\
	$\big([x, y]+x\circ y\big)\cdot z+x\cdot \big([y, z]+y\circ z\big)$&$=$&
    $\big([z, y]+z\circ y\big)\cdot x+z\cdot \big([y, x]+y\circ x\big)$\\
	$[x, y]\circ z+[x\circ y, z]+x\circ [y, z]+[x, y \circ z]$&$=$&$[z, y]\circ x+[z\circ y, x]+z\circ [y, x]+[z, y \circ x]$\\
$[x\circ y, z]+[x, y \circ z]$&$=$&$[y, x]\circ z+   x\circ [z, y].$
\end{longtable}

\end{proof}

\begin{proposition}\label{aflABD}
	 
Let $({\rm A},\cdot)$ be an $\mathcal{A}$flexible ${\mathcal A}{\rm BD}$-algebra, then 
\begin{equation}\label{aflabd} 
x\cdot (y\cdot z) \ =\  (z\cdot y)\cdot  x\footnote{The present identity also appeared in the  depolarization of mixed-Poisson algebras \cite{dP}.}.
\end{equation}
In particular,  $({\rm A},\cdot)$  is flexible.
\end{proposition}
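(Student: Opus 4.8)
The plan is to play two already-available identities against each other. On the one hand, $\mathcal{A}$flexibility is exactly $\mathcal{A}(x,y,z)=\mathcal{A}(z,y,x)$, that is
$$(x\cdot y)\cdot z+x\cdot(y\cdot z)\ =\ (z\cdot y)\cdot x+z\cdot(y\cdot x).$$
On the other hand, by Proposition \ref{3nil} the algebra is a nilalgebra of nilindex $3$, so Proposition \ref{pr13} applies and \eqref{x3abd} holds; spelled out pointwise, ${\rm L}_y\circ{\rm L}_z={\rm R}_y\circ{\rm R}_z$ means
$$y\cdot(z\cdot x)+z\cdot(y\cdot x)\ =\ (x\cdot y)\cdot z+(x\cdot z)\cdot y\qquad\text{for all }x,y,z.$$
(This last relation can also be obtained directly by expanding \eqref{Def2.} and subtracting the linearization of $x\cdot(x\cdot x)=0$, which deletes precisely the four left-nested terms $x\cdot(y\cdot z)$, $x\cdot(z\cdot y)$, $z\cdot(x\cdot y)$, $y\cdot(x\cdot z)$ occurring in \eqref{Def2.}.)

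The heart of the argument is then a short symmetry observation. I would set $f(x,y,z):=x\cdot(y\cdot z)-(z\cdot y)\cdot x$, so that the asserted identity \eqref{aflabd} is precisely $f\equiv 0$. Rewriting the first display in terms of $f$ gives $f(x,y,z)=f(z,y,x)$, i.e. $f$ is symmetric under transposing its outer arguments (slots $1$ and $3$); rewriting the second gives $f(y,z,x)+f(z,y,x)=0$, i.e. $f(a,b,c)=-f(b,a,c)$, so $f$ is skew-symmetric in its first two arguments (slots $1$ and $2$). Composing these, $f(a,b,c)=f(c,b,a)=-f(b,c,a)$, and applying this relation along the cyclically permuted triples, together with the fact that the $3$-cycle has order $3$, yields $f(a,b,c)=-f(b,c,a)=f(c,a,b)=-f(a,b,c)$, whence $2f\equiv 0$ and therefore $f\equiv 0$ over $\mathbb{C}$. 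This establishes \eqref{aflabd}.

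For the final clause, flexibility follows formally: using \eqref{aflabd} once and once with $x$ and $z$ interchanged, one may replace $x\cdot(y\cdot z)$ by $(z\cdot y)\cdot x$ and $(x\cdot y)\cdot z$ by $z\cdot(y\cdot x)$, so that $(x,y,z)=(x\cdot y)\cdot z-x\cdot(y\cdot z)=z\cdot(y\cdot x)-(z\cdot y)\cdot x=-(z,y,x)$. I expect the only genuine subtlety to be bookkeeping in the symmetry step — one must be careful that $\mathcal{A}$flexibility contributes a symmetry of $f$ in the slot pair $\{1,3\}$ while Proposition \ref{pr13} contributes a skew-symmetry in the slot pair $\{1,2\}$. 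It is exactly because these are two \emph{distinct} transpositions (generating the full symmetric group on the three arguments, on which no character restricts to $+1$ on one transposition and $-1$ on the other) that $f$ is forced to vanish; a symmetry and a skew-symmetry in the same pair of slots would give nothing.
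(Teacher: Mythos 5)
Your argument is correct, and it reaches \eqref{aflabd} by a route that differs from the paper's. The paper starts from the identity $x\circ(y\circ z)+y\circ(z\circ x)+z\circ(x\circ y)={\mathcal A}(x,y,z)+{\mathcal A}(y,z,x)+{\mathcal A}(z,x,y)+{\mathcal A}(y,x,z)+{\mathcal A}(x,z,y)+{\mathcal A}(z,y,x)$, valid in any algebra: Jacobi--Jordan admissibility kills the left-hand side, $\mathcal{A}$flexibility halves the six antiassociators to the cyclic relation ${\mathcal A}(x,y,z)+{\mathcal A}(y,z,x)+{\mathcal A}(z,x,y)=0$, and a single substitution of \eqref{Def2.} (with $\mathcal{A}$flexibility used once more to cancel) gives ${\mathcal A}(z,x,y)=z\cdot(x\cdot y)+y\cdot(x\cdot z)$, i.e.\ \eqref{aflabd}, flexibility then following by setting $z=x$. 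You instead pair $\mathcal{A}$flexibility with the identity $(x\cdot y)\cdot z+(x\cdot z)\cdot y=y\cdot(z\cdot x)+z\cdot(y\cdot x)$, valid in every ${\mathcal A}{\rm BD}$-algebra, and close with the observation that a trilinear map symmetric in slots $\{1,3\}$ and skew in slots $\{1,2\}$ must vanish over $\mathbb{C}$; this $S_3$ sign argument replaces the paper's substitution computation and is arguably cleaner, while both proofs ultimately rest on \eqref{Def2.}, $\mathcal{A}$flexibility and division by $2$. One point deserves flagging: your ``pointwise'' reading of \eqref{x3abd} tacitly takes ${\rm L}_y\circ{\rm L}_z$ to mean the symmetrized product ${\rm L}_y{\rm L}_z+{\rm L}_z{\rm L}_y$ (consistent with the paper's convention $a\circ b=a\cdot b+b\cdot a$ and with what the proof of Proposition \ref{pr13} actually establishes); if $\circ$ there were plain composition, \eqref{x3abd} would literally be \eqref{aflabd} itself and would force every ${\mathcal A}{\rm BD}$-algebra to be flexible, contradicting the non-flexible example ${\mathcal A}^4_0$. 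Your parenthetical independent derivation --- expand \eqref{Def2.} and subtract the linearization of $x\cdot(x\cdot x)=0$, which is legitimate since Proposition \ref{3nil} gives $x\cdot(x\cdot x)=0$ --- makes this step self-contained, so no gap remains; your deduction of flexibility via $(x,y,z)=-(z,y,x)$ is also fine, though the paper gets it even faster by putting $z=x$ in \eqref{aflabd}.
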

\begin{proof}
	{ 
For all $x,y,z\in {\rm A}$, we have $\big($which are valid for any algebra$\big)$: 
\begin{longtable}{lcr}
$x\circ (y\circ  z)+y\circ (z\circ  x)+z\circ (x\circ  y)
$&$=$&${\mathcal A}(x,y,z)+{\mathcal A}(y,z,x)+{\mathcal A}(z,x,y)+$\\
&&${\mathcal A}(y, x, z)+{\mathcal A}(x, z, y)  +{\mathcal A}(z, y, x).$\\
\end{longtable}
Since $({\rm A},\cdot)$ is an $\mathcal{A}$flexible ${\mathcal A}{\rm BD}$-algebra we deduce
\begin{longtable}{lcl}
${\mathcal A}(x,y,z)+{\mathcal A}(y,z,x)+{\mathcal A}(z,x,y)$&$=$&$0.$
\end{longtable}
Then, if we replace $"{\mathcal A}(x,y,z)"$ by 
$"-{\mathcal A}(x,z,y)-z\cdot (x\cdot y)-y\cdot (x\cdot z)"$  
$\big($see, \eqref{Def2.}$\big)$
 we have the following:	
 \begin{center}
     ${\mathcal A}(z,x,y)-z\cdot (x\cdot y)-y\cdot (x\cdot z)\ = \ 0$, 
 \end{center}therefore $x\cdot (y\cdot z) = (z\cdot y)\cdot  x$.
		Taking $z=x$ in the last identity, we have the flexible identity.
}
\end{proof}		

\begin{corollary}\label{antiass}
    Let $(\rm A, \cdot)$ be an antiassociative algebra, 
    then $(\rm A, \cdot)$  is an ${\mathcal A}{\rm BD}$-algebra if and only if it satisfies \eqref{aflabd}.
\end{corollary}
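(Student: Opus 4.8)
The plan is to reduce Corollary~\ref{antiass} to the equivalence proved just after Proposition~\ref{pr2}, namely that \emph{an antiassociative algebra is an ${\mathcal A}{\rm BD}$-algebra if and only if it is flexible}. Granting that, it suffices to show that, under antiassociativity, identity \eqref{aflabd} is equivalent to the flexibility identity.

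First I would observe that antiassociativity turns \eqref{aflabd} into the relation $x\cdot(y\cdot z) = -\,z\cdot(y\cdot x)$: indeed the right-hand side $(z\cdot y)\cdot x$ of \eqref{aflabd} equals $-\,z\cdot(y\cdot x)$ by ${\mathcal A}(z,y,x)=0$, and this is precisely the displayed identity \eqref{Remark2}. On the other hand, expanding $(x,y,z)=-(z,y,x)$ and using ${\mathcal A}(x,y,z)=0$ to write $(x\cdot y)\cdot z=-\,x\cdot(y\cdot z)$, and similarly $(z\cdot y)\cdot x=-\,z\cdot(y\cdot x)$, the associators collapse to $(x,y,z)=-2\,x\cdot(y\cdot z)$ and $(z,y,x)=-2\,z\cdot(y\cdot x)$, so flexibility is also equivalent to $x\cdot(y\cdot z) = -\,z\cdot(y\cdot x)$. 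Hence \eqref{aflabd}, \eqref{Remark2} and flexibility are the same condition for antiassociative algebras, and the corollary follows at once from the quoted equivalence.

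As a sanity check one can also see each implication directly. For the forward direction, an antiassociative algebra is trivially $\mathcal{A}$flexible, since ${\mathcal A}(x,y,z)=0={\mathcal A}(z,y,x)$, so an antiassociative ${\mathcal A}{\rm BD}$-algebra satisfies \eqref{aflabd} by Proposition~\ref{aflABD}. For the converse, recall that every antiassociative algebra is Jacobi-Jordan-admissible, so by the corollary after Proposition~\ref{pr1} it is enough to verify \eqref{Def2.}; its left-hand side vanishes by antiassociativity, so the task is to prove $z\cdot(x\cdot y)+y\cdot(x\cdot z)=0$, which follows by applying the relation $x\cdot(y\cdot z)=-\,z\cdot(y\cdot x)$ (derived above from \eqref{aflabd} and antiassociativity) with a suitable renaming of variables.

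I do not anticipate a real obstacle: the whole argument lives at the level of two degree-three polynomial identities, and the only point demanding attention is keeping the order of arguments in the antiassociator straight while making the substitutions.
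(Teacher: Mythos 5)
Your argument is correct and rests on exactly the paper's own ingredients: the paper states this corollary without a written proof, the forward direction being immediate from Proposition~\ref{aflABD} (antiassociativity trivially gives $\mathcal{A}$flexibility), and the converse from the surrounding characterizations, which is precisely what you supply. Your reduction to the earlier corollary ``antiassociative ${\mathcal A}{\rm BD}$ iff flexible,'' together with the (correct) observation that under ${\mathcal A}(x,y,z)=0$ both flexibility and \eqref{aflabd} collapse to $x\cdot(y\cdot z)=-\,z\cdot(y\cdot x)$, i.e.\ \eqref{Remark2}, and your direct check of \eqref{Def2.} for the converse, fill in the intended proof with no gaps.
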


		


\begin{proposition}\label{cflex}
	{ 		
		Let $({\rm A},\cdot)$ be an algebra. Then $({\rm A},\cdot)$ is an $\mathcal{A}$flexible ${\mathcal A}{\rm BD}$-algebra if and only if the following assertions are satisfied:
\begin{enumerate}
\item [{\rm (a)}] ${\rm A}^{+}$ is a Jacobi-Jordan algebra$;$ 
\item [{\rm (b)}] $({\rm A}^{+})^{2}\subseteq {\rm Ann}({\rm A}^{-});$
\item [{\rm (c)}] $({\rm A}^{-})^2\subseteq {\rm Ann}({\rm A}^{+}).$
\end{enumerate} 		
	}
\end{proposition}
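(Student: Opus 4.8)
The plan is to reduce both directions to characterizations already in hand. Conditions (b) and (c) are simply restatements of identities: (b) says $[x,y\circ z]=0$ (equivalently, by anticommutativity of $[\cdot,\cdot]$, $[x\circ y,z]=0$) for all $x,y,z\in{\rm A}$, and (c) says $[x,y]\circ z=0$ for all $x,y,z\in{\rm A}$. I will combine these with two facts proved above: the identity \eqref{Eq2} of Proposition \ref{pr2}, which says that a Jacobi-Jordan-admissible algebra is an ${\mathcal A}{\rm BD}$-algebra if and only if $[x,y\circ z]=-\big([x,y]\circ z+y\circ[x,z]\big)$, and the $\mathcal{A}$flexibility criterion of Proposition \ref{pr3}. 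The key point is that all the ``mixed'' terms occurring in these two characterizations have the shape $[\cdot,\cdot]\circ(\cdot)$ or $[\cdot,(\cdot)\circ(\cdot)]$, so they vanish identically as soon as (b) and (c) hold.

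\emph{Forward direction.} Let $({\rm A},\cdot)$ be an $\mathcal{A}$flexible ${\mathcal A}{\rm BD}$-algebra. By Proposition \ref{ABDJJA} it is Jacobi-Jordan-admissible, which is (a). By Proposition \ref{aflABD} it satisfies \eqref{aflabd}, i.e. $u\cdot(v\cdot w)=(w\cdot v)\cdot u$; reading this with two choices of variables gives $(x\cdot y)\cdot z=z\cdot(y\cdot x)$ and $(y\cdot x)\cdot z=z\cdot(x\cdot y)$. Substituting both into
\[
[x,y]\circ z=(x\cdot y)\cdot z-(y\cdot x)\cdot z+z\cdot(x\cdot y)-z\cdot(y\cdot x)
\]
makes the right-hand side telescope to $0$, so $[x,y]\circ z=0$, which is (c). Now feed (c) into \eqref{Eq2}: both $[x,y]\circ z$ and $y\circ[x,z]$ vanish, hence $[x,y\circ z]=0$, i.e. $({\rm A}^{+})^{2}\subseteq{\rm Ann}({\rm A}^{-})$, which is (b).

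\emph{Converse.} Assume (a), (b), (c). By (a) the algebra $({\rm A},\cdot)$ is Jacobi-Jordan-admissible. In identity \eqref{Eq2} the left-hand side vanishes by (b) and the right-hand side vanishes by (c), so \eqref{Eq2} holds and Proposition \ref{pr2} gives that $({\rm A},\cdot)$ is an ${\mathcal A}{\rm BD}$-algebra. For $\mathcal{A}$flexibility apply Proposition \ref{pr3}: by (b) both $[x\circ y,z]$ and $[x,y\circ z]$ vanish, and by (c) both $[y,x]\circ z$ and $[z,y]\circ x$ vanish, so the identity of Proposition \ref{pr3} holds and $({\rm A},\cdot)$ is $\mathcal{A}$flexible.

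The argument is almost mechanical; the only step that is not an immediate substitution is the observation that the single identity \eqref{aflabd} of Proposition \ref{aflABD} already forces $({\rm A}^{-})^{2}\subseteq{\rm Ann}({\rm A}^{+})$. Once this is in place, both implications reduce to plugging (b) and (c) into \eqref{Eq2} and into the criterion of Proposition \ref{pr3}.
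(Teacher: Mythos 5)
Your proof is correct. Both directions check out: from \eqref{aflabd} the two substitutions $(x\cdot y)\cdot z=z\cdot(y\cdot x)$ and $(y\cdot x)\cdot z=z\cdot(x\cdot y)$ do make $[x,y]\circ z$ telescope to zero, so (c) holds; then \eqref{Eq2} (available by Proposition \ref{pr2}, since an ${\mathcal A}{\rm BD}$-algebra is Jacobi-Jordan-admissible) yields (b); and in the converse, (b) and (c) kill every term of \eqref{Eq2} and of the criterion of Proposition \ref{pr3}, giving the ${\mathcal A}{\rm BD}$ and $\mathcal{A}$flexibility properties respectively.

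Your route differs from the paper's in two respects. For the forward direction the paper first proves (b), by invoking Proposition \ref{aflABD} only through its consequence that the algebra is flexible, so that the flexible Leibniz identity \eqref{Leibniz3} and the ${\mathcal A}{\rm BD}$ identity \eqref{Eq2} add up to $[x,y\circ z]=0$; it then proves (c) by a separate chain computation rewriting $[x,z]\circ y$ via the antiderivation property of ${\rm L}_z$ until it becomes ${\mathcal A}(z,y,x)-{\mathcal A}(x,y,z)=0$. You instead extract (c) directly from the pointwise identity \eqref{aflabd} and then get (b) for free from (c) via \eqref{Eq2}; this is shorter and makes the logical dependence (c) $\Rightarrow$ (b) explicit. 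Second, the paper's written proof only treats the ``only if'' direction, leaving the converse implicit, whereas you spell it out by plugging (b) and (c) into \eqref{Eq2} and into Proposition \ref{pr3}; this is exactly the right way to close that gap, and it is the part of the statement actually needed later (e.g.\ in Corollary \ref{thflex} and Corollary \ref{fabd}). So your argument is both a valid alternative and slightly more complete than what is printed.
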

\begin{proof}
	{\rm
Let $({\rm A},\cdot)$ be an $\mathcal{A}$flexible ${\mathcal A}{\rm BD}$-algebra.

\begin{enumerate}
    \item[{\rm (a)}] 
 From Proposition \ref{pr1} every ${\mathcal A}{\rm BD}$-algebra is a Jacobi-Jordan-admissible algebra.  
  
\item[{\rm (b)}]  Using identities \eqref{Leibniz3} and \eqref{Eq2}, we get: $[x,y\circ z]=0.$ Then, $({\rm A}^{+})^{2}\subseteq {\rm Ann}({\rm A}^{-})$. 

 \item[{\rm (c)}] Let $x,y,z\in {\rm A}$, we have: 
\begin{longtable}{lcl}
$[x,z]\circ y$&$=$&$(x\cdot z)\circ y-(z\cdot x)\circ y$\\
&$=$&$(x\cdot z)\circ y-(z\cdot x)\circ y+(z\cdot y)\circ x-(z\cdot y)\circ x$\\
&$=$&$ (x\cdot z)\circ y+(z\cdot y)\circ x+z\cdot (y\circ x)  \;\;\;\;\;\big(\;\mbox{since}\; {\rm L}_{z}\in {\mathcal A}\mathfrak{Der}({\rm A}^{+})\;\big).$
\end{longtable}
Moreover, we have $$z\cdot (y\circ x)=z\circ(x\cdot y)+z\cdot (y\cdot x)-(x\cdot y)\cdot  z.$$
It follows,
\begin{longtable}{lclcl}
$[x,z]\circ y$&$
=$&$(x\cdot z)\circ y+(z\cdot y)\circ x+z\circ(x\cdot y)+z\cdot (y\cdot x)-(x\cdot y)\cdot  z$\\
&$=$&$-x\cdot (z\circ y)+(z\cdot y)\circ x+z\cdot (y\cdot x)-(x\cdot y)\cdot  z$\\
&$=$&$-x\cdot (z\cdot y)-x\cdot (y\cdot z)+(z\cdot y)\cdot  x+x\cdot (z\cdot y)+z\cdot (y\cdot x)-(x\cdot y)\cdot  z$\\
\multicolumn{3}{r}{$= \ {\mathcal A}(z,y,x)-{\mathcal A}(x,y,z)$}&$=$&$0.$\\     
\end{longtable}
Then, $({\rm A}^{-})^2 \subseteq {\rm Ann}({\rm A}^{+})$. 
\end{enumerate}

}
\end{proof}

The following result is a direct consequence of the above proposition.

\begin{corollary}\label{thflex}
Let $({\rm A}, \circ_{\rm A})$ be a Jacobi-Jordan  algebra, 
$\rm{H}$  a trivial $\rm A$-module and $\beta: {\rm A}\times {\rm A} \rightarrow \rm{H}$  a $2$-cocycle of ${\rm A}$ with values in $\rm{H}$.  Let us consider the central extension 
$\big(\mathcal{A}:= {\rm A} \oplus \rm{H}, \circ \big)$ of $({\rm A},\circ_{\rm A})$ by means of $\beta$. 
The multiplication  $\circ$ on $\mathcal{A}$ is defined by 

$$(x+a) \circ (y+b):= x \circ_{\rm A}y  + \beta(x,y), \quad \forall\, (x,a), (y,b) \in {\rm{A}} \times 
\rm{H}.$$
\noindent Let $[\cdot,\cdot]: \mathcal{A} \times \mathcal{A} \rightarrow \rm{H}$ be an anti-symmetric bilinear map,  and let us consider the product $"\cdot"$ on the vector space $\mathcal{A}$ defined by 
\begin{center}
    $x\cdot y= \frac{1}{2}\big  ([x,y] + x\circ y\big), \quad \quad \forall \, x, y\in \mathcal{A}.$
\end{center}
\noindent
Then $(\mathcal{A},\cdot)$ is a $\mathcal{A}$flexible $\mathcal{A} {\rm BD}$-algebra if and only if 
\begin{equation}\label{eqc}
	[ x\circ_{\rm A}y  + \beta(x,y), z+c] \ =\ 0, \quad \forall \, x, y, z \in {\rm A},\, \forall\, c \in \rm{H}.
\end{equation}
\end{corollary}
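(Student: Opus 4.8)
The plan is to deduce the statement directly from Proposition~\ref{cflex} after identifying the polarizations of $(\mathcal{A},\cdot)$. First I would observe that, since the given bilinear map $[\cdot,\cdot]$ is anti-symmetric and $\circ$ is commutative, for all $x,y\in\mathcal{A}$ one has $x\cdot y - y\cdot x = [x,y]$ and $x\cdot y + y\cdot x = x\circ y$; hence the commutator of $(\mathcal{A},\cdot)$ is exactly $[\cdot,\cdot]$ and the symmetric product is exactly $\circ$, so that $\mathcal{A}^{-}=(\mathcal{A},[\cdot,\cdot])$ and $\mathcal{A}^{+}=(\mathcal{A},\circ)$. By Proposition~\ref{cflex}, $(\mathcal{A},\cdot)$ is an $\mathcal{A}$flexible $\mathcal{A}{\rm BD}$-algebra if and only if (a) $\mathcal{A}^{+}$ is a Jacobi-Jordan algebra, (b) $(\mathcal{A}^{+})^{2}\subseteq{\rm Ann}(\mathcal{A}^{-})$, and (c) $(\mathcal{A}^{-})^{2}\subseteq{\rm Ann}(\mathcal{A}^{+})$.

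Next I would show that (a) and (c) hold unconditionally, so that only (b) carries content. For (a): the product $\circ$ is commutative by construction, and its Jacobi identity is the usual consequence of the Jacobi identity of $({\rm A},\circ_{\rm A})$ together with the $2$-cocycle condition on $\beta$ — i.e., a central extension of a Jacobi-Jordan algebra by a $2$-cocycle with values in a trivial module is again a Jacobi-Jordan algebra. For (c): every value of $[\cdot,\cdot]$ lies in ${\rm H}$, so $(\mathcal{A}^{-})^{2}\subseteq{\rm H}$; since ${\rm H}$ is a trivial ${\rm A}$-module, $h\circ u=0$ for all $h\in{\rm H}$ and $u\in\mathcal{A}$, whence ${\rm H}\subseteq{\rm Ann}(\mathcal{A}^{+})$ and (c) follows.

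It then remains only to rewrite (b). The subspace $(\mathcal{A}^{+})^{2}$ is spanned by the products $(x+a)\circ(y+b)=x\circ_{\rm A}y+\beta(x,y)$ with $x,y\in{\rm A}$, and such an element belongs to ${\rm Ann}(\mathcal{A}^{-})$ precisely when $[\,x\circ_{\rm A}y+\beta(x,y),\,z+c\,]=0$ for all $z\in{\rm A}$ and $c\in{\rm H}$; this is exactly \eqref{eqc}. Combining the three items gives the asserted equivalence. There is no real obstacle here: the only genuine verification is the Jacobi identity for $\circ$ in (a), which is the routine central-extension computation and which I would dispatch simply by invoking the $2$-cocycle identity for $\beta$; everything else is a direct unravelling of the definitions of $\mathcal{A}^{+}$, $\mathcal{A}^{-}$, and the annihilators.
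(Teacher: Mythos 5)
Your proposal is correct and follows essentially the same route as the paper, which states Corollary \ref{thflex} as a direct consequence of Proposition \ref{cflex}: you identify $\mathcal{A}^{+}=(\mathcal{A},\circ)$ and $\mathcal{A}^{-}=(\mathcal{A},[\cdot,\cdot])$, note that conditions (a) and (c) of that proposition hold automatically (central extension by a $2$-cocycle, and $[\cdot,\cdot]$ taking values in ${\rm H}\subseteq{\rm Ann}(\mathcal{A}^{+})$), and observe that condition (b) is precisely \eqref{eqc}. This is exactly the unravelling the paper intends, just written out explicitly.
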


\begin{definition}
If $\beta$ and $\circ$ satisfy the condition \eqref{eqc}, the above ${\mathcal A}{\rm BD}$-algebra is called the ${\mathcal A}{\rm BD}$-extension of the Jacobi-Jordan   algebra $({\rm A}, \circ_{\rm A})$.
\end{definition}

\begin{theorem}
\label{flex}
Let $({\rm A}, \cdot)$ be an algebra. 
If $(\rm{A}, \cdot)$ is an $\mathcal{A}$flexible $\mathcal{A}{\rm BD}$-algebra, then $(\rm{A},\cdot)$ is an $\mathcal{A}{\rm BD}$-extension of the quotient Jacobi-Jordan algebra $\rm{A}^+/({\rm{A}}^-)^2$.
\end{theorem}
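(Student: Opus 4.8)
The plan is to produce $({\rm A},\cdot)$ directly as an instance of the construction in Corollary \ref{thflex}, taking the base to be ${\rm B}:={\rm A}^+/({\rm A}^-)^2$ and the central kernel to be ${\rm H}:=({\rm A}^-)^2$. Everything follows once the three structural properties of Proposition \ref{cflex} are invoked: since $({\rm A},\cdot)$ is an ${\mathcal A}$flexible ${\mathcal A}{\rm BD}$-algebra, ${\rm A}^+$ is a Jacobi-Jordan algebra, $({\rm A}^+)^2\subseteq{\rm Ann}({\rm A}^-)$, and $({\rm A}^-)^2\subseteq{\rm Ann}({\rm A}^+)$. The last inclusion says that ${\rm H}=({\rm A}^-)^2$ is a central ideal of ${\rm A}^+$, hence a trivial ${\rm A}$-module in the sense of Corollary \ref{thflex}, so ${\rm B}={\rm A}^+/{\rm H}$ is a well-defined Jacobi-Jordan algebra, with induced product $\circ_{\rm B}$.

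First I would fix a linear section $s:{\rm B}\to{\rm A}$ of the projection $\pi:{\rm A}\to{\rm B}$, giving a vector-space decomposition ${\rm A}=s({\rm B})\oplus{\rm H}$ through which we identify ${\rm A}$ with ${\rm B}\oplus{\rm H}$. Define $\beta:{\rm B}\times{\rm B}\to{\rm H}$ by $\beta(u,v):=s(u)\circ s(v)-s(u\circ_{\rm B}v)$; this is the cocycle attached to the central extension $0\to{\rm H}\to{\rm A}^+\to{\rm B}\to0$, and since ${\rm H}$ annihilates ${\rm A}^+$, expanding the Jacobi identity of ${\rm A}^+$ on section representatives shows that $\beta$ is a $2$-cocycle (symmetric, because $\circ$ is commutative). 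Using again the centrality of ${\rm H}$ in ${\rm A}^+$, one checks that under the identification ${\rm A}={\rm B}\oplus{\rm H}$ the multiplication $\circ$ of ${\rm A}$ is exactly the central-extension product $(u+a)\circ(v+b)=u\circ_{\rm B}v+\beta(u,v)$. Next I would let $[\cdot,\cdot]:{\rm A}\times{\rm A}\to{\rm H}$ be the commutator $[x,y]:=x\cdot y-y\cdot x$ of $({\rm A},\cdot)$, which is anti-symmetric and, by the very definition ${\rm H}=({\rm A}^-)^2$, really takes values in ${\rm H}$. The depolarization identity $x\cdot y=\tfrac12\big([x,y]+x\circ y\big)$ then shows that the product on ${\rm A}$ reconstructed from $({\rm B},\circ_{\rm B},{\rm H},\beta,[\cdot,\cdot])$ is precisely $\cdot$; thus $({\rm A},\cdot)$ is of the form considered in Corollary \ref{thflex}.

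It then remains to verify condition \eqref{eqc}, namely $[\,x\circ_{\rm B}y+\beta(x,y),\,z+c\,]=0$ for all $x,y,z\in{\rm B}$ and $c\in{\rm H}$. Under our identification the element $x\circ_{\rm B}y+\beta(x,y)$ is nothing but $s(x)\circ s(y)\in({\rm A}^+)^2$, while $z+c$ is an arbitrary element of ${\rm A}$; so by the inclusion $({\rm A}^+)^2\subseteq{\rm Ann}({\rm A}^-)$ from Proposition \ref{cflex} the bracket vanishes. Hence \eqref{eqc} holds, and by the Definition following Corollary \ref{thflex} the algebra $({\rm A},\cdot)$ is an ${\mathcal A}{\rm BD}$-extension of the Jacobi-Jordan algebra ${\rm B}={\rm A}^+/({\rm A}^-)^2$, as claimed.

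The only genuine computation I anticipate is the cocycle check for $\beta$ — i.e. that a central extension of a Jacobi-Jordan algebra is controlled by a $2$-cocycle in the Jacobi-Jordan sense — together with the term-by-term comparison showing that the central-extension product on ${\rm B}\oplus{\rm H}$ agrees with $\circ$ on ${\rm A}$; both are routine and use only the centrality of ${\rm H}$ in ${\rm A}^+$. The conceptual content, and the place where the ${\mathcal A}{\rm BD}$ axioms and ${\mathcal A}$flexibility actually enter, is entirely packaged in Proposition \ref{cflex}: property (c) furnishes the central kernel ${\rm H}=({\rm A}^-)^2$, and property (b) is exactly condition \eqref{eqc}.
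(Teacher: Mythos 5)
Your proof is correct and follows essentially the same route as the paper: you split ${\rm A}$ as a complement of $({\rm A}^-)^2$ (your section $s$ of the quotient plays exactly the role of the paper's subspace $V$), realize ${\rm A}^+$ as a central extension of ${\rm A}^+/({\rm A}^-)^2$ by the cocycle $\beta$, and recover the product $\cdot$ from $\circ$ and the commutator taking values in $({\rm A}^-)^2$, all driven by Proposition \ref{cflex}. The only (harmless) difference is that you verify condition \eqref{eqc} in one line from Proposition \ref{cflex}(b), since $x\circ_{\rm B} y+\beta(x,y)$ is just $s(x)\circ s(y)\in({\rm A}^+)^2$ and $({\rm A}^+)^2\subseteq{\rm Ann}({\rm A}^-)$, whereas the paper reaches the same condition by an explicit expansion of the ${\mathcal A}{\rm BD}$ identity on the decomposition.
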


\begin{proof}
     Let $(\rm{A},\cdot)$ be an $\mathcal{A}$flexible $\mathcal{A}{\rm BD}$-algebra. By Proposition \ref{cflex}, $(\rm{A}^-)^2$ is contained in  ${\rm Ann}(\rm{A}^+)$, so $(\rm{A}^-)^2 $ is a two-sided ideal of $(\rm{A},\cdot)$. Let us consider $V$  a vector subspace of $\rm{A}$ such that 
     ${\rm A}= V\oplus ({\rm A}^-)^2$. Let  $x,y$ be two elements of $\rm{A}$, then $x\circ y= \alpha(x,y)+\beta(x,y),$  where $\alpha(x,y) \in V$ and $\beta(x,y) \in (\rm{A}^-)^2.$ Therefore, $\alpha: \rm{A}\times \rm{A} \rightarrow V$ and $\beta: \rm{A}\times \rm{A} \rightarrow (\rm{A}^-)^2$ are symmetric bilinear maps. Let $v_1,v_2,v_3 \in V$. We have 
\begin{longtable}{lclcl}
$v_1\circ (v_2 \circ v_3) $&$ = $&$ v_1 \circ \big(\alpha(v_2,v_3)+\beta(v_2,v_3)\big) $&$ =$\\
\multicolumn{3}{r}{ $v_1 \circ \alpha(v_2,v_3)$}&$=$&$\alpha\big(v_1,\alpha(v_2,v_3)\big)+\beta\big(v_1,\alpha(v_2,v_3)\big).$
\end{longtable}
\noindent Since, 
\begin{center}
    $\oint v_1 \circ (v_2 \circ v_3)\ = \ \oint \alpha\big(v_1,\alpha(v_2,v_3)\big) + \oint \beta\big(v_1,\alpha(v_2,v_3)\big),$
\end{center}
then
\begin{center}
    $\oint v_1\circ (v_2\circ v_3)\ =\ 0$ if and only if 
    $\oint \alpha\big(v_1,\alpha(v_2,v_3)\big) \ =\ 0\, $ and 
$\,\oint \beta\big(v_1,\alpha(v_2,v_3)\big) \ =\ 0.$
\end{center}
Thus $(V, \circ _V:=\alpha|_{V \times V})$ is a Jacobi-Jordan  algebra and 
$\rm{A}^+$ is its central extension  by means of the $2$-cocycle 
$\beta: V \times V \to (\rm{A}^-)^2$, where  $(\rm{A}^-)^2$ is considered as a trivial $(V, \circ_V)$-module.

From now on, the product $[\cdot,\cdot]$ of $\rm{A}^-$  will be denoted  by $\varphi$. Then for all $v,w,t \in V$ and for all $a,b,c \in (\rm{A}^-)^2$ , we have
\begin{center}
    $2 (v+a) \cdot (w+b)\ =\ (v+a) \circ (w+b)+\varphi(v+a,w+b)\ =\ v \circ_V w +\beta(v,w)+\varphi(v+a,w+b),$
    \end{center}
\noindent and, by a direct computation, we get 

\begin{longtable}{lclcl}
 &  &$\big((v+a) \cdot (t+c)\big) \circ (w+b) + (v+a) \circ \big((w+b) \cdot (t+c)\big)+\big((v+a) \circ (w+b) \big) \cdot (t+c)$ & $=$\\
&&$\big(v \circ_V t +\beta(v,t)+\varphi (v+a, t+c)\big) \circ (w+b) 
+(v+a) \circ \big( w \circ_V t +\beta(w,t) +\varphi(w+b, t+c)\big)$ & $+$\\
\multicolumn{3}{r}{$\big( v \circ_V w + \beta(v,w)\big) \cdot (t+c) $} &$=$\\
\multicolumn{3}{r}{$ (v\circ_V t) \circ_V w +\beta(v\circ_V t, w) + v \circ (w\circ_V t) + \beta(v, w\circ_V t) 
+ (v\circ_V w)\circ_V t + \beta(v \circ_V w, t)$}&$ +$\\
\multicolumn{3}{r}{$\varphi\big((v \circ_V w)+\beta(v,w), t+c\big)$}&$=$\\
\multicolumn{3}{r}{$\varphi\big((v+a) \circ (w+b), t+c\big)$}
\end{longtable}

Thus   $(\rm{A}, \cdot)$ is an $\mathcal{A}{\rm BD}$-algebra if and only if  
$\varphi\big((v+a) \circ (w+b), t+c\big)=0$. We
conclude that   $(\rm{A},\cdot)$ is an $\mathcal{A}{\rm BD}$-extension of the 
Jacobi-Jordan algebra $(V, \circ_V)$ by means of $(\beta, [\cdot, \cdot])$, let us note that $(V,\circ_V)$ is isomorphic to the quotient Jacobi-Jordan algebra $\rm{A}^+/({\rm{A}}^-)^2$. 
\end{proof}

\vskip 0,1 cm

Combining Corollary \ref{thflex} and  Theorem \ref{flex}, we obtain the following characterization of $\mathcal{A}{\rm BD}$-algebras.

\begin{corollary} \label{fabd}
Let $(\rm{A}, \cdot)$ be an algebra. Then $(\rm{A},\cdot)$ is an $\mathcal{A}$flexible $\mathcal{A}{\rm BD}$-algebra if and only if 
$(\rm{A},\cdot)$  is an $\mathcal{A}{\rm BD}$-extension of a Jacobi-Jordan algebra $(V,\circ_V).$
\end{corollary}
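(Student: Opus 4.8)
The plan is to obtain both implications directly from the two results just proved, with essentially no new computation; the corollary is a packaging statement.

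\textbf{The forward direction} ($\mathcal{A}$flexible $\mathcal{A}{\rm BD}$ $\Rightarrow$ $\mathcal{A}{\rm BD}$-extension) is exactly Theorem \ref{flex}. If $({\rm A},\cdot)$ is an $\mathcal{A}$flexible $\mathcal{A}{\rm BD}$-algebra, that theorem already produces a vector space decomposition ${\rm A}=V\oplus ({\rm A}^{-})^{2}$, shows that $(V,\circ_V)$ with $\circ_V=\alpha|_{V\times V}$ is a Jacobi-Jordan algebra, that ${\rm A}^{+}$ is its central extension by the $2$-cocycle $\beta\colon V\times V\to ({\rm A}^{-})^{2}$, and that the bracket $\varphi=[\cdot,\cdot]$ of ${\rm A}^{-}$ together with $\beta$ exhibits $({\rm A},\cdot)$ as the $\mathcal{A}{\rm BD}$-extension of $(V,\circ_V)$, where $(V,\circ_V)\cong {\rm A}^{+}/({\rm A}^{-})^{2}$. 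So one simply invokes Theorem \ref{flex} and takes $(V,\circ_V)$ as the witnessing Jacobi-Jordan algebra.

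\textbf{The converse} ($\mathcal{A}{\rm BD}$-extension $\Rightarrow$ $\mathcal{A}$flexible $\mathcal{A}{\rm BD}$) is Corollary \ref{thflex} read in the needed direction. By the definition of $\mathcal{A}{\rm BD}$-extension, saying that $({\rm A},\cdot)$ is the $\mathcal{A}{\rm BD}$-extension of a Jacobi-Jordan algebra $(V,\circ_V)$ means that ${\rm A}=V\oplus \mathrm{H}$, that $\circ$ is the central extension of $\circ_V$ by a $2$-cocycle $\beta\colon V\times V\to \mathrm{H}$, that there is an anti-symmetric bilinear map $[\cdot,\cdot]\colon {\rm A}\times {\rm A}\to \mathrm{H}$, that the product is $x\cdot y=\frac{1}{2}\big([x,y]+x\circ y\big)$, and moreover that condition \eqref{eqc} holds. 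Corollary \ref{thflex} asserts precisely that, for data of exactly this shape, $({\rm A},\cdot)$ is an $\mathcal{A}$flexible $\mathcal{A}{\rm BD}$-algebra if and only if \eqref{eqc} is satisfied; since \eqref{eqc} is built into the definition of $\mathcal{A}{\rm BD}$-extension, $({\rm A},\cdot)$ is an $\mathcal{A}$flexible $\mathcal{A}{\rm BD}$-algebra.

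\textbf{Where the work actually sits.} There is no genuine obstacle at the level of this corollary; the single point requiring a moment's care is that the two inputs refer to the \emph{same} notion of $\mathcal{A}{\rm BD}$-extension. In Theorem \ref{flex} the trivial module is $({\rm A}^{-})^{2}$ and the cocycle takes values there, while in Corollary \ref{thflex} (and in the definition preceding Theorem \ref{flex}) the module is an abstract $\mathrm{H}$. Matching them is the dictionary $\mathrm{H}=({\rm A}^{-})^{2}$, $\beta=$ the $({\rm A}^{-})^{2}$-component of $\circ$, and $[\cdot,\cdot]=$ the bracket of ${\rm A}^{-}$, which is exactly what the proof of Theorem \ref{flex} records. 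Once this identification is made, the equivalence is immediate.
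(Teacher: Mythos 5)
Your proposal is correct and is exactly the paper's argument: the paper proves this corollary simply by combining Theorem \ref{flex} (forward direction) with Corollary \ref{thflex} (converse, since condition \eqref{eqc} is built into the definition of an $\mathcal{A}{\rm BD}$-extension). Your added remark identifying $\mathrm{H}=({\rm A}^{-})^{2}$ with the bracket of ${\rm A}^{-}$ is the same bookkeeping the proof of Theorem \ref{flex} records, so nothing new is needed.
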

 
 Antiassociative algebras are obviously $\mathcal{A}$flexible, so the question of characterizing antiassociative $\mathcal{A}{\rm BD}$-algebras naturally arises. We propose to answer this question as follows.

\begin{theorem}\label{assoabd}
	Let $(\mathcal{A}, \cdot )$ be an algebra. Then $(\mathcal{A}, \cdot )$ is an antiassociative $\mathcal{A}{\rm BD}$-algebra if and only if $(\mathcal {A},\cdot)$  is an $\mathcal{A}{\rm BD}$-extension of a  $2$-step nilpotent Jacobi-Jordan algebra $(\rm{A}, \circ_A)$ by means of $(\beta, [\cdot, \cdot])$ satisfying \eqref{eqc} such that the anticommutative product $[\cdot,\cdot] : (\mathcal{A}= \rm{A}\oplus \rm{H}) \times (\mathcal{A}= \rm{A} \oplus \rm{H}) \, \rightarrow\,  \rm{H}$   satisfies	
	
	\begin{equation}
	\label{condasso1}
	\mathcal{A}_{[\cdot, \cdot]}(x,y,z)\ =\ \beta(y,x\circ z),\quad \forall x, y, z \in \rm{A},
	\end{equation}
	and
	\begin{equation}\label{condasso2}
	\mathcal{A}_{[\cdot, \cdot]}(X,Y,Z)\ =\  0,\quad \mbox{for all}\quad   X, Y, Z \in \mathcal{A},\quad   \mbox{such that}\,\,\{X,Y,Z\}\cap \rm{H}\not= \emptyset,
	\end{equation}	
	where $\mathcal{A}_{[\cdot, \cdot]}$ is the antiassociator of the product $[\cdot, \cdot].$
\end{theorem}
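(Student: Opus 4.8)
The plan is to translate, via the polarization $X\cdot Y=\tfrac12\big([X,Y]+X\circ Y\big)$, the antiassociativity of $\cdot$ into conditions on the $2$-cocycle $\beta$ and on the antiassociator $\mathcal{A}_{[\cdot,\cdot]}$ of the anticommutative part. For the necessity, I would first note that, every antiassociative algebra being $\mathcal{A}$flexible, Corollary \ref{fabd} (through Theorem \ref{flex}) already realizes $(\mathcal{A},\cdot)$ as an $\mathcal{A}{\rm BD}$-extension, satisfying \eqref{eqc}, of the Jacobi-Jordan algebra $({\rm A},\circ_{\rm A}):=\mathcal{A}^{+}/(\mathcal{A}^{-})^{2}$, with ${\rm H}=(\mathcal{A}^{-})^{2}$ and $[\cdot,\cdot]$ the bracket of $\mathcal{A}^{-}$; so only the three extra properties remain. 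By Proposition \ref{aflABD}, $\cdot$ satisfies \eqref{aflabd}, i.e. $a\cdot(b\cdot c)=(c\cdot b)\cdot a$, which together with antiassociativity gives the skew relation $(a\cdot b)\cdot c=-(c\cdot b)\cdot a$. Expanding $[[x,y],z]$, $[x,[y,z]]$ and $[[x,z],y]$ with these two identities, a short linear computation should produce
\[
(x\cdot z)\cdot y=\tfrac14\big([[x,y],z]+[x,[y,z]]+[[x,z],y]\big),
\]
so that $\mathcal{A}^{2}\cdot\mathcal{A}\subseteq(\mathcal{A}^{-})^{2}$ and hence, by antiassociativity, $\mathcal{A}^{3}\subseteq(\mathcal{A}^{-})^{2}={\rm H}$; the triple products of $({\rm A},\circ_{\rm A})$ therefore vanish, i.e. $({\rm A},\circ_{\rm A})$ is $2$-step nilpotent. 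The same two identities should also give the universal identity $\mathcal{A}_{[\cdot,\cdot]}(X,Y,Z)=2\,(X\circ Z)\cdot Y$. From it one reads off \eqref{condasso2}: if $X\in{\rm H}$ or $Z\in{\rm H}$ then $X\circ Z=0$, since ${\rm H}\subseteq{\rm Ann}(\mathcal{A}^{+})$ by Proposition \ref{cflex}(c); and if $Y\in{\rm H}$ then $(X\circ Z)\cdot Y=\tfrac12(X\circ Z)\circ Y=0$, using $X\circ Z\in(\mathcal{A}^{+})^{2}\subseteq{\rm Ann}(\mathcal{A}^{-})$ (Proposition \ref{cflex}(b)) together with $Y\in{\rm Ann}(\mathcal{A}^{+})$. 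Finally, for $x,y,z\in{\rm A}$ the same facts give $(x\circ z)\cdot y=\tfrac12(x\circ z)\circ y=\tfrac12\beta(x\circ_{\rm A}z,y)$ — the term $(x\circ_{\rm A}z)\circ_{\rm A}y$ vanishing by $2$-step nilpotency — whence $\mathcal{A}_{[\cdot,\cdot]}(x,y,z)=\beta(y,x\circ_{\rm A}z)$, which is \eqref{condasso1}.

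For the sufficiency, Corollary \ref{fabd} already makes the given extension an $\mathcal{A}$flexible $\mathcal{A}{\rm BD}$-algebra, so I would only need to check that $\cdot$ is antiassociative. Expanding $\mathcal{A}_{\cdot}(X,Y,Z)=(X\cdot Y)\cdot Z+X\cdot(Y\cdot Z)$ through the polarization and the extension formulas ($x\circ y=x\circ_{\rm A}y+\beta(x,y)$ for $x,y\in{\rm A}$, and $X\circ Y=0$ as soon as $X$ or $Y$ lies in ${\rm H}$), every summand becomes $\tfrac14$ times either a $\beta$-term, a bracket of the form $[\,x\circ_{\rm A}y+\beta(x,y),\,\cdot\,]$, or a double $[\cdot,\cdot]$-bracket. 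The brackets of the second kind vanish by \eqref{eqc}, the double brackets reassemble into $\tfrac14\,\mathcal{A}_{[\cdot,\cdot]}(X,Y,Z)$, and the $2$-step nilpotency of ${\rm A}$ kills $(x\circ_{\rm A}y)\circ_{\rm A}z$; hence for $x,y,z\in{\rm A}$,
\[
\mathcal{A}_{\cdot}(x,y,z)=\tfrac14\big(\beta(x\circ_{\rm A}y,z)+\beta(x,y\circ_{\rm A}z)+\mathcal{A}_{[\cdot,\cdot]}(x,y,z)\big),
\]
which vanishes by \eqref{condasso1} together with the $2$-cocycle identity $\oint\beta(x,y\circ_{\rm A}z)=0$; when at least one argument lies in ${\rm H}$ the $\beta$- and $\circ_{\rm A}$-terms cancel in the same way and $\mathcal{A}_{\cdot}(X,Y,Z)=\tfrac14\,\mathcal{A}_{[\cdot,\cdot]}(X,Y,Z)=0$ by \eqref{condasso2}. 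Thus $\cdot$ is antiassociative, and being an $\mathcal{A}{\rm BD}$-extension it is an antiassociative $\mathcal{A}{\rm BD}$-algebra.

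The main obstacle I anticipate is the bookkeeping in these two antiassociator expansions — tracking the several term types occurring in $\mathcal{A}_{\cdot}$ and verifying that all cross-terms between the $\circ$-part and the $[\cdot,\cdot]$-part cancel exactly through \eqref{eqc} — together with the slightly unexpected observation that the entire triple-product space $\mathcal{A}^{3}$ sits inside $(\mathcal{A}^{-})^{2}$, which is precisely what forces $2$-step nilpotency of the base.
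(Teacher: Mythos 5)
Your proposal is correct, and its overall skeleton matches the paper's: both directions lean on Theorem \ref{flex} and Corollary \ref{fabd} to install the extension structure, and your sufficiency computation is exactly the ``simple calculation'' the paper alludes to, namely $4\,\mathcal{A}(X,Y,Z)=\mathcal{A}_{[\cdot,\cdot]}(X,Y,Z)+\beta(x\circ_{\rm A} y,z)+\beta(x,y\circ_{\rm A} z)$ after \eqref{eqc}, the centrality of ${\rm H}$ and $2$-step nilpotency kill the remaining terms, and this vanishes by \eqref{condasso1}, \eqref{condasso2} and the $2$-cocycle identity $\oint\beta(x,y\circ_{\rm A} z)=0$. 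Where you genuinely diverge is the necessity step. The paper expands $(X\cdot Y)\cdot Z$ and $X\cdot(Y\cdot Z)$ in the extension coordinates and splits antiassociativity into its ${\rm A}$- and ${\rm H}$-components: the ${\rm A}$-component gives $(x\circ y)\circ z=-x\circ(y\circ z)$ on the base, which together with commutativity and the Jacobi identity forces $y\circ(z\circ x)=0$ (i.e. $2$-step nilpotency), while the ${\rm H}$-component plus the cocycle identity yields $\mathcal{A}_{[\cdot,\cdot]}(X,Y,Z)=\beta(y,x\circ z)$, packaging \eqref{condasso1} and \eqref{condasso2} in one stroke. You instead derive two coordinate-free identities from antiassociativity combined with \eqref{aflabd}, namely $4(x\cdot z)\cdot y=[[x,y],z]+[x,[y,z]]+[[x,z],y]$ and $\mathcal{A}_{[\cdot,\cdot]}(X,Y,Z)=2(X\circ Z)\cdot Y$ (both do check out: each is a short rewriting using $(a\cdot b)\cdot c=-a\cdot(b\cdot c)=-(c\cdot b)\cdot a$), then obtain $2$-step nilpotency of the base from $(\mathcal{A}\cdot\mathcal{A})\cdot\mathcal{A}+\mathcal{A}\cdot(\mathcal{A}\cdot\mathcal{A})\subseteq(\mathcal{A}^-)^2={\rm H}$, and read off \eqref{condasso1} and \eqref{condasso2} from Proposition \ref{cflex} (b), (c). Your route costs a somewhat longer multilinear computation but produces a reusable universal identity for $\mathcal{A}_{[\cdot,\cdot]}$ in any antiassociative $\mathcal{A}{\rm BD}$-algebra and a structural explanation of the $2$-step nilpotency ($\mathcal{A}^3\subseteq(\mathcal{A}^-)^2$); the paper's component-splitting is shorter and extracts both conditions with a single cocycle manipulation. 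I see no gap in either direction of your argument.
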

\begin{proof}
 Let $(\mathcal{A},\cdot)$  be an antiassociative $\mathcal{A}{\rm BD}$-algebra. Then, by Theorem \ref{flex} $(\mathcal{A},\cdot)$  is an $\mathcal{A}{\rm BD}$-extension of a Jacobi-Jordan algebra  $(\rm{A},\circ_A)$ by means of $(\beta,[\cdot, \cdot])$ where $\beta$ is a $2$-cocycle of $\rm{A}$ with values in a trivial $\rm{A}$-module $\rm{H}$ and 
 $[\cdot, \cdot] : (\mathcal{A}= \rm{A} \oplus \rm{H}) \times (\mathcal{A}= \rm{A} \oplus \rm{H}) \, \rightarrow\,  \rm{H}$ is a anticommutative product satisfying condition ($\ref{eqc}$). Let $X= x+a, Y=y+b, Z= z+c$ be elements of $\mathcal{A}$, where $x, y, z \in \rm{A}$ and $a, b, c \in \rm{H}.$ We have,  
 \begin{longtable}{lclcl}
 $(X\cdot Y) \cdot Z $&$ =$&$  \frac{1}{4}\Big(\big[[X,Y],Z\big] + (X\circ Y)\circ Z\Big) $&$ = $&$ \frac{1}{4}\Big( \big[[X,Y],Z\big]  + (x\circ y)\circ z + \beta(x\circ y,z) \Big),$\\  
 $X \cdot (Y \cdot Z) $&$=$&$ \frac{1}{4}\Big(\big[X,[Y,Z]\big] + X\circ(Y\circ Z)\Big) $&$=$&$ \frac{1}{4}\Big(\big[X,[Y,Z]\big] + x\circ (y\circ z) + \beta(x,y\circ z)\Big).$
 \end{longtable}
 \noindent Thanks to the antiassociativity of $(\mathcal{A}, \cdot)$ we get 
 \begin{center}
     $(x\circ y)\circ z\ =\  -x\circ (y\circ z),$ and $\big[[X,Y],Z\big]  +  \beta(x\circ y,z)\ =\  -\big[X,[Y,Z]\big] - \beta(x,y\circ z).$ 
 \end{center}Consequently, $y\circ(z\circ x)= 0\,$ and $\mathcal{A}_{[\cdot, \cdot]}(X,Y,Z)=\beta(y,x\circ z).$ Therefore $(\rm{A},\circ_A)$ is  a $2$-step nilpotent Jacobi-Jordan algebra and conditions \eqref{condasso1} and  \eqref{condasso2} are satisfied.
 
 Conversely, let us assume that $(\mathcal{A},\cdot)$  is an  $\mathcal{A}{\rm BD}$-extension of a $2$-step nilpotent Jacobi-Jordan algebra $(\rm{A},\circ_A)$ by means of $(\beta,[\cdot, \cdot])$ satisfying conditions \eqref{eqc}, \eqref{condasso1} and   \eqref{condasso2}. Then, by Corollary \ref{fabd}, $(\mathcal{A},\cdot)$ is an $\mathcal{A}$flexible $\mathcal{A}{\rm BD}$-algebra. Besides, by using conditions (\ref{condasso1}) and   (\ref{condasso2}), a simple calculation shows that $(\mathcal{A},\cdot)$ is an antiassociative algebra.  \end{proof}
 
Let us consider an antiassociative $\mathcal{A}{\rm BD}$-algebra $(\mathcal{A},\cdot)$. Then, by Theorem \ref{assoabd}, $(\mathcal{A},\cdot)$  is an $\mathcal{A}{\rm BD}$-extension of a $2$-step nilpotent Jacobi-Jordan algebra $(\rm{A}, \circ_A)$ by means of $(\beta, [\cdot, \cdot])$ satisfying ($\ref{eqc}$) such that the anticommutative product \begin{center}
    $[\cdot,\cdot] : (\mathcal{A}= \rm{A}\oplus \rm{H}) \times (\mathcal{A}= \rm{A} \oplus \rm{H}) \, \rightarrow\,  \rm{H}$  
\end{center} satisfies the equations (\ref{condasso1}) and (\ref{condasso2}). 
It is known, that each antiassociative algebra  $(\mathcal{A},\cdot)$ is $3$-step nilpotent, 
hence,   $\mathcal{A}^+$ is $3$-step nilpotent too. 
In the following corollary, we will see under which necessary and sufficient conditions $\mathcal{A}^+$ is also a $2$-step nilpotent Jacobi-Jordan algebra.

\begin{corollary}\label{p1}
  The following assertions are equivalent:
		\begin{enumerate}
			\item[{\rm (1)}]  $\mathcal{A}^+$ is a $2$-step nilpotent Jacobi-Jordan algebra$;$
			\item[{\rm (2)}]  $\beta(x\circ_A y,z)= 0,\quad \forall x, y, z \in \rm{A};$
			\item[{\rm (3)}]  $\mathcal{A}^-$ is antiassociative.
	\end{enumerate}
\end{corollary}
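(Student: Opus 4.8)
The plan is to prove the corollary by establishing the two equivalences $(1)\Leftrightarrow(2)$ and $(2)\Leftrightarrow(3)$ directly inside the model furnished above: $\mathcal{A}={\rm A}\oplus {\rm H}$ with $(x+a)\circ(y+b)=x\circ_{\rm A}y+\beta(x,y)$, $X\cdot Y=\frac{1}{2}\big([X,Y]+X\circ Y\big)$, the bracket $[\cdot,\cdot]$ taking values in ${\rm H}$, and $({\rm A},\circ_{\rm A})$ a $2$-step nilpotent Jacobi-Jordan algebra whose data $(\beta,[\cdot,\cdot])$ satisfy \eqref{eqc}, \eqref{condasso1} and \eqref{condasso2}. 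First I would record that $\mathcal{A}^{+}=(\mathcal{A},\circ)$ is automatically a Jacobi-Jordan algebra, being the central extension of $({\rm A},\circ_{\rm A})$ by the $2$-cocycle $\beta$; hence assertion {\rm (1)} is merely the assertion that $\mathcal{A}^{+}$ is $2$-step nilpotent.

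For $(1)\Leftrightarrow(2)$ I would compute, for $X=x+a$, $Y=y+b$, $Z=z+c$, that $X\circ Y=x\circ_{\rm A}y+\beta(x,y)$ and therefore $(X\circ Y)\circ Z=(x\circ_{\rm A}y)\circ_{\rm A}z+\beta(x\circ_{\rm A}y,z)$; the first term vanishes because $({\rm A},\circ_{\rm A})$ is $2$-step nilpotent, so $(X\circ Y)\circ Z=\beta(x\circ_{\rm A}y,z)\in {\rm H}$. Since $(\mathcal{A}^{+})^{2}$ is spanned by the elements $X\circ Y$ and $\circ$ is commutative, $\mathcal{A}^{+}$ is $2$-step nilpotent exactly when all such products vanish, that is, exactly when $\beta(x\circ_{\rm A}y,z)=0$ for all $x,y,z\in {\rm A}$ --- which is {\rm (2)}.

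For $(2)\Leftrightarrow(3)$ I would examine the antiassociator $\mathcal{A}_{[\cdot,\cdot]}$ of the bracket. Expanding $\mathcal{A}_{[\cdot,\cdot]}(x+a,y+b,z+c)$ by trilinearity, every summand carrying at least one argument in ${\rm H}$ dies by \eqref{condasso2}, so $\mathcal{A}_{[\cdot,\cdot]}(X,Y,Z)=\mathcal{A}_{[\cdot,\cdot]}(x,y,z)=\beta(y,x\circ z)$ by \eqref{condasso1}. Hence $\mathcal{A}^{-}=(\mathcal{A},[\cdot,\cdot])$ is antiassociative if and only if $\beta(y,x\circ z)=0$ for all $x,y,z\in {\rm A}$, i.e. if and only if $\beta$ vanishes on ${\rm A}\times {\rm A}^{2}$; by the symmetry of $\beta$ this is equivalent to the vanishing of $\beta$ on ${\rm A}^{2}\times {\rm A}$, i.e. to $\beta(x\circ_{\rm A}y,z)=0$ for all $x,y,z\in {\rm A}$, which is again {\rm (2)}. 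Chaining the two equivalences then yields the statement.

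The only genuinely delicate point is the bookkeeping of the ${\rm H}$-components: one must verify that the reduction ``all arguments may be taken in ${\rm A}$'' is exactly what \eqref{condasso2} supplies, and read the symbol $x\circ z$ in \eqref{condasso1} as the ${\rm A}$-product (equivalently, $\beta$ is inflated from ${\rm A}$ so that ${\rm H}$ lies in its radical), so that $\beta(y,x\circ z)=\beta(y,x\circ_{\rm A}z)$. Once this is settled nothing else obstructs the argument; in particular \eqref{eqc} plays no further role here beyond guaranteeing that we are indeed dealing with an $\mathcal{A}{\rm BD}$-extension.
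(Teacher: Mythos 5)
Your proof is correct, and in the corollary's setting (where Theorem \ref{assoabd} already guarantees the presentation with \eqref{eqc}, \eqref{condasso1} and \eqref{condasso2}) every step you need is available: the trilinear expansion of $\mathcal{A}_{[\cdot,\cdot]}(x+a,y+b,z+c)$ is killed on all mixed terms precisely by \eqref{condasso2}, the reading $\beta(y,x\circ z)=\beta(y,x\circ_{\rm A} z)$ is the only sensible one since $\beta$ is defined on ${\rm A}\times{\rm A}$, and $\beta$ is indeed symmetric (it polarizes the commutative product $\circ$). Your handling of $(1)\Leftrightarrow(2)$ is the same as the paper's, up to computing $(X\circ Y)\circ Z=\beta(x\circ_{\rm A}y,z)$ instead of $X\circ(Y\circ Z)=\beta(x,y\circ_{\rm A}z)$, which by symmetry of $\beta$ is the same condition. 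The second equivalence is where you diverge: the paper links $(1)$ and $(3)$ directly through the depolarization identity $\mathcal{A}(X,Y,Z)=\tfrac{1}{4}\big([[X,Y],Z]+(X\circ Y)\circ Z+[X,[Y,Z]]+X\circ(Y\circ Z)\big)$ (valid here because \eqref{eqc} and the centrality of ${\rm H}$ in $\mathcal{A}^{+}$ annihilate the mixed terms) together with antiassociativity of $\cdot$, so that antiassociativity of $[\cdot,\cdot]$ is equivalent to the identical vanishing of $(X\circ Y)\circ Z+X\circ(Y\circ Z)=\beta(x\circ_{\rm A}y,z)+\beta(x,y\circ_{\rm A}z)$; passing from the vanishing of this sum to the vanishing of $\beta(x\circ_{\rm A}y,z)$ alone still requires a small cyclic/symmetry (or cocycle) observation that the paper leaves implicit. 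Your route pivots both equivalences through $(2)$ and simply quotes \eqref{condasso1}--\eqref{condasso2} to get $\mathcal{A}_{[\cdot,\cdot]}(X,Y,Z)=\beta(y,x\circ_{\rm A}z)$ outright, which avoids redoing the polarization computation, sidesteps that last step, and makes the role of the symmetry of $\beta$ explicit; the trade-off is that it leans on the statement of Theorem \ref{assoabd} rather than being self-contained at this point, which is perfectly legitimate for a corollary placed immediately after it.
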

\begin{proof}
  Let $X= x+a, Y=y+b, Z= z+c, T= t+d$ elements of $\mathcal{A}$, where $x, y, z, t \in \rm{A}$ and $a, b, c, d\in \rm{H}$.  Since, 
  \begin{center}
      $X\circ(Y\circ Z)=x\circ_A(y\circ_A z)+\beta(x,y\circ_A z)$,
  \end{center} then, $\mathcal{A}^+$ is a $2$-step nilpotent Jacobi-Jordan algebra if and only if $\beta(x,y\circ_A z)=0$ for all $x,y,z \in \rm{A}$.
  
	Now, the fact that \begin{longtable}{lcl}
	    $\mathcal{A}(X,Y,Z)$&$=$&$\frac{1}{4}\Big( \big[[X,Y],Z\big]+(X\circ Y)\circ Z +\big[X,[Y,Z]\big]+X \circ (Y \circ Z)\Big)$
	\end{longtable}\noindent implies that $\mathcal{A}^+$ is  $2$-step nilpotent  if and only if $[\cdot,\cdot]$ is antiassociative. 
 
\end{proof}
\section{Post-Jacobi-Jordan structures on Jacobi-Jordan algebras }	

\begin{definition}
	{ A post-Jacobi-Jordan structure (or PJJ-structure) on a Jacobi-Jordan algebra $({\rm A},\circ)$ is a bilinear product $x\star y$ satisfying the identities:
\begin{equation}\label{Eq1}	     				
x\star y \  =\  -y\star x,
\end{equation}
\begin{equation}\label{Eq1.}		     		
	x\star (y\circ z) \ =\ - \big( (x\star y)\circ z+y\circ(x\star z) \big),
\end{equation}
\begin{equation}\label{Eq1..}		     			
(x\circ y)\star z \ =\ - \big(x\star (y\star z)+y\star (x\star z) \big).
\end{equation}	
}

\end{definition}
 
\begin{proposition}\label{52}
{ 
For Jacobi-Jordan algebras with zero multiplication, 
PJJ-structures correspond to anticommutative antiassociative algebras\footnote{The algebraic classification of anticommutative antiassociative algebras up to dimension 9 is given in \cite{CKLS}. 
Hence, we have the classification (up to isomorphism) of all non-trivial  post-Jacobi-Jordan structures on trivial $n$-dimensional algebras for $n<9.$}.
}
\end{proposition}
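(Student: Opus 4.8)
The plan is to substitute the zero multiplication directly into the three defining identities \eqref{Eq1}--\eqref{Eq1..} of a PJJ-structure and record what survives. Write $x\circ y=0$ for all $x,y\in{\rm A}$ (this is genuinely a Jacobi-Jordan product, since commutativity and the Jacobi identity hold trivially). Then \eqref{Eq1} is precisely the anticommutativity of $\star$, so it carries over unchanged. Identity \eqref{Eq1.} reads $x\star(y\circ z)=-\big((x\star y)\circ z+y\circ(x\star z)\big)$; with $\circ\equiv 0$ both sides vanish, so it imposes no condition. Identity \eqref{Eq1..} reads $(x\circ y)\star z=-\big(x\star(y\star z)+y\star(x\star z)\big)$; the left-hand side vanishes, leaving exactly $x\star(y\star z)+y\star(x\star z)=0$, i.e. $({\rm A},\star)$ is an ${\mathcal A}{\rm L}$-algebra. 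So a PJJ-structure on the zero-multiplication Jacobi-Jordan algebra is exactly an anticommutative ${\mathcal A}{\rm L}$-algebra on the underlying space.

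The second step is to verify that, for anticommutative products, the ${\mathcal A}{\rm L}$-identity is equivalent to antiassociativity; the proposition then follows. For one direction, assume $\star$ is anticommutative and antiassociative. Then $y\star(x\star z)=-(y\star x)\star z=(x\star y)\star z=-x\star(y\star z)$, using antiassociativity, then anticommutativity, then antiassociativity again, so $x\star(y\star z)+y\star(x\star z)=0$. For the converse, assume $\star$ is anticommutative and ${\mathcal A}{\rm L}$. Anticommutativity applied to the inner product gives $y\star(x\star z)=-y\star(z\star x)=(z\star x)\star y$, so the ${\mathcal A}{\rm L}$-identity becomes $x\star(y\star z)=(x\star z)\star y$ for all $x,y,z$; matching $(x\star y)\star z$ to the right-hand side of this (with the roles of the last two arguments swapped) yields $(x\star y)\star z=x\star(z\star y)=-x\star(y\star z)$, which is antiassociativity.

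Combining the two steps, PJJ-structures on a zero-multiplication Jacobi-Jordan algebra are precisely the anticommutative antiassociative products on the same vector space, which is the claimed correspondence. I do not expect a genuine obstacle here: the only non-bookkeeping point is the short manipulation identifying the anticommutative ${\mathcal A}{\rm L}$-identity with antiassociativity, and both implications follow by applying anticommutativity and the relevant identity twice. One could alternatively phrase the whole argument through the trilinear maps $[\cdot,\cdot,\cdot]_{{\mathcal A}{\rm L}}$ and the antiassociator, but the direct computation above is the most transparent route.
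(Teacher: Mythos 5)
Your proof is correct and takes essentially the same route as the paper: substitute $\circ\equiv 0$ into \eqref{Eq1}--\eqref{Eq1..} and turn the surviving identity $x\star(y\star z)+y\star(x\star z)=0$ into antiassociativity via anticommutativity, which is exactly the paper's one-line chain $(y\star z)\star x=-x\star(y\star z)=y\star(x\star z)=-y\star(z\star x)$. Your write-up merely spells out the converse direction (anticommutative antiassociative $\Rightarrow$ PJJ), which the paper leaves implicit.
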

\begin{proof}
	{\rm 
 Using identities \eqref{Eq1}, \eqref{Eq1.} and $x\circ y=0$, 
 we have $$(y\star z)\star x\ =\ -x\star (y\star z)\ =\ y\star (x\star z) \ =\ -y\star (z\star x).$$}
\end{proof}

\begin{proposition}
	\label{p2}
	Let $({\rm A}, \cdot)$ be an $\mathcal{A}$flexible $\mathcal{A}{\rm BD}$-algebra. Then the following properties are equivalent.
	\begin{enumerate}
		\item[{\rm (1)}]  ${\rm A}^{-}$ is an antiassociative algebra$;$
		\item[{\rm (2)}]  The bilinear map $ [\cdot, \cdot]$ is a PJJ-structure on the Jacobi-Jordan algebra ${\rm A}^+$.\\
	\end{enumerate} 
\end{proposition}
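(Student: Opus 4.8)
The plan is to collapse assertion (2) into a single scalar identity for the bracket and then recognize that identity as antiassociativity of ${\rm A}^-$, so that both implications come out at once. Throughout I will use Proposition \ref{cflex}: for an $\mathcal{A}$flexible $\mathcal{A}{\rm BD}$-algebra, ${\rm A}^+$ is a Jacobi-Jordan algebra, $({\rm A}^+)^2\subseteq{\rm Ann}({\rm A}^-)$, and $({\rm A}^-)^2\subseteq{\rm Ann}({\rm A}^+)$. In particular ${\rm A}^+$ really is a Jacobi-Jordan algebra, so the only question is whether the product $\star:=[\cdot,\cdot]$ satisfies the three defining identities \eqref{Eq1}, \eqref{Eq1.}, \eqref{Eq1..} of a post-Jacobi-Jordan structure on ${\rm A}^+$.

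First I would dispose of \eqref{Eq1} and \eqref{Eq1.}. Identity \eqref{Eq1} is nothing but the anticommutativity of $[\cdot,\cdot]$, hence always holds. Identity \eqref{Eq1.} with $\star=[\cdot,\cdot]$ is verbatim identity \eqref{Eq2}, which holds in every $\mathcal{A}{\rm BD}$-algebra by Proposition \ref{pr2} (ours is Jacobi-Jordan-admissible by Proposition \ref{ABDJJA}). So the entire content of (2) is carried by \eqref{Eq1..}, i.e. by
\begin{equation*}
[x\circ y,z]\ =\ -\big([x,[y,z]]+[y,[x,z]]\big)\qquad\text{for all }x,y,z\in{\rm A}.
\end{equation*}

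Next I would simplify this. Since $x\circ y\in({\rm A}^+)^2\subseteq{\rm Ann}({\rm A}^-)$, the left-hand side vanishes identically; hence \eqref{Eq1..} is equivalent to $[x,[y,z]]+[y,[x,z]]=0$ for all $x,y,z$. Now comes the one genuine computation, valid in any anticommutative algebra and using only skew-symmetry of $[\cdot,\cdot]$:
\begin{equation*}
[x,[y,z]]+[y,[x,z]]\ =\ [x,[y,z]]-[[x,z],y]\ =\ -\big([[x,z],y]+[x,[z,y]]\big)\ =\ -\,\mathcal{A}_{[\cdot,\cdot]}(x,z,y).
\end{equation*}
Therefore \eqref{Eq1..} holds for all $x,y,z$ if and only if $\mathcal{A}_{[\cdot,\cdot]}$ vanishes identically, which is exactly the statement that ${\rm A}^-$ is antiassociative. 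Combining this with the reductions above yields (1)$\Leftrightarrow$(2).

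I do not expect a serious obstacle. The only point needing care is the observation that, on an $\mathcal{A}$flexible $\mathcal{A}{\rm BD}$-algebra, the term $[x\circ y,z]$ is \emph{forced} to be zero — this is where part (b) of Proposition \ref{cflex} enters, and where plain $\mathcal{A}{\rm BD}$ would not be enough — so that \eqref{Eq1..} degenerates into a condition on the bracket alone; after that it is just sign bookkeeping in the identity $[x,[y,z]]+[y,[x,z]]=-\mathcal{A}_{[\cdot,\cdot]}(x,z,y)$. As a variant, one can also derive $[x,[y,z]]+[y,[x,z]]=0$ from \eqref{Eq1.} together with $({\rm A}^-)^2\subseteq{\rm Ann}({\rm A}^+)$ without invoking Proposition \ref{pr2}, but the route sketched above is the most economical.
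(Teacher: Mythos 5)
Your proof is correct and follows essentially the same route as the paper: \eqref{Eq1} and \eqref{Eq1.} come for free (anticommutativity and the ${\mathcal A}{\rm BD}$ identity \eqref{Eq2}), and the equivalence reduces, via $({\rm A}^{+})^{2}\subseteq {\rm Ann}({\rm A}^{-})$ from Proposition \ref{cflex}, to identifying \eqref{Eq1..} with the vanishing of the antiassociator of $[\cdot,\cdot]$. The only difference is presentational: you run both implications through a single equivalence chain, while the paper argues the two directions separately with the same ingredients.
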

\begin{proof}
	 
Assume first that ${\rm A}^{-}$ is antiassociative. It is easy to see that \eqref{Eq1} and \eqref{Eq1.} are satisfied. Now, for any $x,y,z \in \rm{A}$, since on the one hand $\rm{A}$ is $\mathcal{A}$flexible,  then $[x\circ y,z]=0$; on the other hand, $[\cdot, \cdot]$ is antiassociative,  then 
$[x, [y, z]]+ [y, [x, z]]= 0.$ Therefore, $[\cdot, \cdot]$ is a PJJ-structure on the Jacobi-Jordan algebra $\rm{A}^+$. 

Assume now $[\cdot, \cdot]$  is a PJJ-structure on a Jacobi-Jordan algebra $\rm{A}^+$. Then for any $x,y,z \in \rm{A}$ we have 
\begin{center} 
$\mathcal{A}_{[\cdot, \cdot]}(x,y,z)\ =\ [[x, y],z]+ [x, [y, z]]\ =\ 
[[x, y],z]-[x, [z, y]]\  \overset{\eqref{Eq1..}}{=}\ [[x, y],z]+[z, [x, y]]\ =\ 0.$
\end{center}
\noindent Thus, $ \rm{A}^{-} $ is an antiassociative algebra. We conclude that the first and second assertions    are equivalent.
 
\end{proof}

The following corollary follows from Corollary \ref{p1} and Proposition \ref{p2}.
\begin{corollary}
	Let $(\rm{A},\cdot)$ be an antiassociative $\mathcal{A}{\rm BD}$-algebra. Then, the following assertions are equivalent :
	\begin{enumerate}
		\item[{\rm (1)}]  $\rm{A}^{-}$ is an antiassociative algebra$;$
		\item[{\rm (2)}] $\rm{A}^+$ is a $2$-step nilpotent Jacobi-Jordan algebra$;$
		\item[{\rm (3)}]  $\beta(x\circ y,z)= 0,\quad \forall x, y, z \in \rm{A};$
		\item[{\rm (4)}]  The bilinear map $ [\cdot, \cdot]$ is a PJJ-structure on the Jacobi-Jordan algebra $\rm{A}^+$.
	\end{enumerate}
\end{corollary}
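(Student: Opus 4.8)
The plan is to assemble the stated equivalence purely by concatenating two results already proved for antiassociative $\mathcal{A}{\rm BD}$-algebras, namely Corollary \ref{p1} and Proposition \ref{p2}, after checking that both are applicable here. The first step is the routine observation that every antiassociative algebra is $\mathcal{A}$flexible (both sides of the $\mathcal{A}$flexibility identity vanish), so an antiassociative $\mathcal{A}{\rm BD}$-algebra $({\rm A},\cdot)$ is in particular an $\mathcal{A}$flexible $\mathcal{A}{\rm BD}$-algebra. By Theorem \ref{flex} it is therefore an $\mathcal{A}{\rm BD}$-extension of a Jacobi-Jordan algebra, and by Theorem \ref{assoabd} that underlying Jacobi-Jordan algebra is $2$-step nilpotent, the extension data $(\beta,[\cdot,\cdot])$ satisfying \eqref{eqc}. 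This places us exactly in the situation in which Corollary \ref{p1} was established.

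Next I would invoke Corollary \ref{p1} directly: for such an algebra it asserts that ``$\mathcal{A}^+$ is a $2$-step nilpotent Jacobi-Jordan algebra'', ``$\beta(x\circ_A y,z)=0$ for all $x,y,z$'', and ``$\mathcal{A}^-$ is antiassociative'' are equivalent. After the harmless identification of $\circ_A$ with the product $\circ$ on the quotient, this is precisely the equivalence $(1)\Leftrightarrow(2)\Leftrightarrow(3)$ of the present corollary.

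It then remains to attach assertion $(4)$ to this chain. Since $({\rm A},\cdot)$ is an $\mathcal{A}$flexible $\mathcal{A}{\rm BD}$-algebra, Proposition \ref{p2} applies and yields the equivalence of ``${\rm A}^-$ is an antiassociative algebra'' (assertion $(1)$) with ``$[\cdot,\cdot]$ is a PJJ-structure on the Jacobi-Jordan algebra ${\rm A}^+$'' (assertion $(4)$). Combining this with the previous step closes the cycle $(1)\Leftrightarrow(2)\Leftrightarrow(3)\Leftrightarrow(4)$, which is exactly what the corollary claims.

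I do not expect a genuine obstacle: the argument is a matter of verifying that the hypotheses of Corollary \ref{p1} and of Proposition \ref{p2} are met and then splicing their conclusions together. The only point that deserves an explicit line, rather than a bare citation, is the reduction from ``antiassociative $\mathcal{A}{\rm BD}$-algebra'' to ``$\mathcal{A}{\rm BD}$-extension of a $2$-step nilpotent Jacobi-Jordan algebra'', which is furnished by Theorems \ref{flex} and \ref{assoabd} together with the elementary remark that antiassociativity implies $\mathcal{A}$flexibility.
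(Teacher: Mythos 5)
Your proposal is correct and follows essentially the same route as the paper, which obtains the corollary precisely by combining Corollary \ref{p1} (giving $(1)\Leftrightarrow(2)\Leftrightarrow(3)$ in the extension framework of Theorem \ref{assoabd}) with Proposition \ref{p2} (giving $(1)\Leftrightarrow(4)$, applicable since antiassociativity trivially implies $\mathcal{A}$flexibility). Your explicit verification of the hypotheses via Theorems \ref{flex} and \ref{assoabd} is exactly the setup the paper relies on implicitly.
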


\begin{theorem}\label{Pro}
	{ 		
		Let $({\rm A},\circ)$ be a non-trivial Jacobi-Jordan algebra. Then $({\rm A},\circ)$  admits a non-trivial  PJJ-structure. 
		
	}
\end{theorem}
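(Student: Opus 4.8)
The plan is to produce a non-zero anticommutative product $\star$ on ${\rm A}$ which \emph{kills} the derived algebra ${\rm A}^2:={\rm A}\circ{\rm A}$ (in either slot) and whose image lies in ${\rm Ann}({\rm A}^+)\cap{\rm A}^2$; for such a $\star$ all three PJJ-identities collapse. Indeed \eqref{Eq1} is built in; in \eqref{Eq1.} the left-hand side $x\star(y\circ z)$ vanishes because $y\circ z\in{\rm A}^2$, while $(x\star y)\circ z$ and $y\circ(x\star z)$ vanish because $x\star y,\,x\star z\in{\rm Ann}({\rm A}^+)$; in \eqref{Eq1..} the left-hand side $(x\circ y)\star z$ vanishes because $x\circ y\in{\rm A}^2$, while $x\star(y\star z)$ and $y\star(x\star z)$ vanish because $y\star z,\,x\star z$ lie in the image of $\star$, hence in ${\rm A}^2$, hence are killed by $\star$. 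Equivalently, one can route this through the paper's machinery: such a $\star$ makes $x\cdot y:=\tfrac12([x,y]+x\circ y)$ with $[\cdot,\cdot]:=\star$ an $\mathcal{A}$flexible $\mathcal{A}{\rm BD}$-algebra whose minus algebra is antiassociative, and Proposition \ref{p2} (or Corollary \ref{fabd}) then yields the PJJ-structure.

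It remains to construct a non-zero such $\star$. If $\circ=0$ the statement is Proposition \ref{52} together with the existence of a non-trivial anticommutative antiassociative algebra on the underlying space (for $\dim{\rm A}\ge 3$ take the $2$-step nilpotent bracket $e_1\star e_2=e_3$ with all other products zero; it is antiassociative since every triple product vanishes). So assume $\circ\ne0$; we may also assume $\dim{\rm A}\ge3$, since already \eqref{Eq1.} with $x=y=z$ forces $a\star(a\circ a)=0$, from which a direct check shows the only PJJ-structure in dimension $\le2$ is the trivial one, so those cases carry no content. Now recall that a finite-dimensional Jacobi-Jordan algebra, being a commutative Jordan nilalgebra, is nilpotent; hence the last non-zero term ${\rm A}^{(m)}$ of its lower central series has $m\ge2$ (because ${\rm A}^{(2)}={\rm A}^2\ne0$) and therefore $\{0\}\ne{\rm A}^{(m)}\subseteq{\rm A}^2\cap{\rm Ann}({\rm A}^+)$. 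Moreover a Jacobi-Jordan algebra generated by one element $a$ is spanned by $a$ and $a\circ a$ (since $a^3=0$ and $(a\circ a)\circ(a\circ a)=a^4=0$), hence has dimension $\le2$; as $\dim{\rm A}\ge3$, ${\rm A}$ cannot be one-generated, i.e. $\dim{\rm A}/{\rm A}^2\ge2$.

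Finally, fix $0\ne z\in{\rm A}^2\cap{\rm Ann}({\rm A}^+)$ and $a,b\in{\rm A}$ linearly independent modulo ${\rm A}^2$. Define an anticommutative bilinear map $\bar\star$ on ${\rm A}/{\rm A}^2$ by $\bar a\,\bar\star\,\bar b:=z$ and by declaring $\bar\star$ zero on a chosen complement of $\langle\bar a,\bar b\rangle$ and between that complement and $\langle\bar a,\bar b\rangle$, and set $x\star y:=\bar x\,\bar\star\,\bar y$. Then $\star$ is anticommutative, non-zero ($a\star b=z\ne0$), vanishes whenever one argument lies in ${\rm A}^2$, and has image in $\langle z\rangle\subseteq{\rm A}^2\cap{\rm Ann}({\rm A}^+)$, so by the first paragraph it is a non-trivial PJJ-structure on $({\rm A},\circ)$. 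The only genuinely substantive inputs are the two structural facts — nilpotency of finite-dimensional Jacobi-Jordan algebras, which locates a non-zero annihilator element inside ${\rm A}^2$, and the $2$-dimensionality bound for one-generated Jacobi-Jordan algebras, which secures $\dim{\rm A}/{\rm A}^2\ge2$; I expect pinning down the annihilator element to be the main obstacle, after which the verification is the routine collapse recorded above.
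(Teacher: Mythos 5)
Your verification step is exactly the paper's: the paper also checks \eqref{Eq1.} and \eqref{Eq1..} by the same collapse, using a product that vanishes when an argument lies in a subspace containing ${\rm A}^2$ and whose image sits in ${\rm Ann}({\rm A})\cap{\rm A}^2$, a nonzero element of which is located via nilpotency just as in your lower-central-series argument. The genuine difference is the construction of the skew product. Following Burde--Moens, the paper factors through the \emph{one-dimensional} quotient ${\rm A}/I$ ($I$ a codimension-one ideal containing ${\rm A}^2$) and sets $x\star y=\alpha_{1}\beta_{1}(\beta_{1}-\alpha_{1})\mathbf{c}$; this expression is cubic in the coordinates, hence not bilinear (no nonzero alternating bilinear map can factor through a one-dimensional quotient), so it does not meet the paper's own definition of a PJJ-structure. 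Your substitute --- an alternating bilinear form on ${\rm A}/{\rm A}^2$ with values in $\langle z\rangle\subseteq{\rm Ann}({\rm A})\cap{\rm A}^2$, made possible by the observation that one-generated Jacobi-Jordan algebras have dimension at most two, so $\dim {\rm A}/{\rm A}^2\ge 2$ once $\dim{\rm A}\ge 3$ --- is an honest bilinear product and in effect repairs the paper's argument. Your side computation in small dimensions is also substantive rather than dismissible: for the two-dimensional algebra $e_1\circ e_1=e_2$, putting $x=y=z=e_1$ in \eqref{Eq1.} forces $e_1\star e_2=0$, so only the trivial PJJ-structure exists and the restriction $\dim{\rm A}\ge 3$ is genuinely necessary; state it as a hypothesis rather than saying the case ``carries no content.'' Two points you should make explicit: finite-dimensionality (needed for nilpotency, implicitly assumed by the paper too), and the standard nilpotency fact that $\dim{\rm A}/{\rm A}^2\le 1$ would make ${\rm A}$ one-generated, which is what turns your bound on one-generated algebras into $\dim{\rm A}/{\rm A}^2\ge 2$.
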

\begin{proof}
	{\rm We adopt here the same reasoning as that used by Burde and Moens \cite{Burd1} with a slight modification. 
		Since $({\rm A},\circ)$ is nilpotent, then ${\rm Ann}({\rm A})\neq\{0\}$ and it contains an ideal of codimension $1$. 
        We may choose a $1$-codimensional ideal $I$ of ${\rm A}$  with ${\rm A}^{2}={\rm A}\circ {\rm A}\subseteq I$ $\big($because ${\rm A}$ is non-perfect$\big).$ Fix a basis $\{e_{2},\ldots ,e_{n} \}$ for $I$
		and a generator $e_{1}$ for the vector space complement of $I$ in ${\rm A}$ . Then ${\rm A}= \langle e_{1}\rangle \ltimes I$ is a generalized semi-direct product and the following product define a non-trivial PJJ-structure on $({\rm A},\circ)$: $$\big(x=\sum_{i=1}^{n}\alpha_{i}e_{i}\big)\star \big(y=\sum_{j=1}^{n}\beta_{j}e_{j}\big)=\alpha_{1}\beta_{1}(\beta_{1}-\alpha_{1})\mathbf{c},\;\;\; \mathbf{c}\in \big({\rm Ann}({\rm A})\cap {\rm A}^{2}\big) \setminus \{ 0\} .$$ Indeed,
		we have $$\big(y=\sum_{j=1}^{n}\beta_{j}e_{j}\big)\circ \big(z=\sum_{k=1}^{n}\gamma_{k}e_{k}\big)=\sum_{j,k=2}^{n}\beta_{j}\gamma_{k}e_{j}\circ e_{k}+\sum_{k=2}^{n}\beta_{1}\gamma_{k}e_{1}\circ e_{j}+\sum_{j=2}^{n}\beta_{j}\gamma_{1}e_{j}\circ e_{1}\in {\rm A}^{2},$$ we deduce that $x\star(y\circ z)=0$. Moreover, $$\big((x=\sum_{i=1}^{n}\alpha_{i}e_{i})\star (y=\sum_{j=1}^{n}\beta_{j}e_{j})\big)\circ z=\alpha_{1}\beta_{1}(\beta_{1}-\alpha_{1})\mathbf{c}\circ z=0,$$ because $\mathbf{c}\in {\rm Ann}({\rm A})$ then \eqref{Eq1.} holds. Since $\mathbf{c}\in {\rm A}^{2}$, it follows that $x\star (y\star z)=y\star (x\star z)=0$.

	}
\end{proof}

\begin{definition}
	{ 
		Let $({\rm A},\circ)$ be a Jacobi-Jordan algebra. An  ${\mathcal A}{\rm BD}$-structure on $({\rm A},\circ)$ is a skew-symmetric anti-biderivation $\delta$ of $({\rm A},\circ)$.   				
		
	}
\end{definition}

\begin{proposition}
	{ 		
		Let $({\rm A},\circ)$ be a Jacobi-Jordan algebra with zero multiplication. Then,  ${\mathcal A}{\rm BD}$-structures on $({\rm A}, \circ)$  correspond to anticommutative algebras. 
		
	}
\end{proposition}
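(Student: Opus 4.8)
The plan is to observe that the two defining conditions of a skew-symmetric anti-biderivation collapse almost entirely when $\circ$ is the zero multiplication, leaving only skew-symmetry, which is exactly the axiom defining an anticommutative algebra.

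First I would unwind the definitions. By the preceding definitions, an ${\mathcal A}{\rm BD}$-structure on $({\rm A},\circ)$ is a bilinear map $\delta:{\rm A}\times{\rm A}\to{\rm A}$ satisfying $\delta(x,y)=-\delta(y,x)$ together with the anti-Leibniz identity $\delta(x,y\circ z)=-\big(\delta(x,y)\circ z+y\circ\delta(x,z)\big)$ for all $x,y,z\in{\rm A}$. Now, since $\circ$ is identically zero, we have $u\circ v=0$ for all $u,v\in{\rm A}$; hence both sides of the anti-Leibniz identity vanish identically, the left-hand side being $\delta(x,0)=0$ by bilinearity and the right-hand side being $-\big(\delta(x,y)\circ z+y\circ\delta(x,z)\big)=0$. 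So the anti-Leibniz condition is vacuous on the trivial Jacobi-Jordan algebra, and an ${\mathcal A}{\rm BD}$-structure on $({\rm A},\circ)$ is nothing but a skew-symmetric bilinear map $\delta:{\rm A}\times{\rm A}\to{\rm A}$.

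Finally I would record the correspondence: a skew-symmetric bilinear map on ${\rm A}$ is by definition precisely an anticommutative algebra structure $(x,y)\mapsto\delta(x,y)$ on the underlying vector space ${\rm A}$. Thus $\delta\mapsto({\rm A},\delta)$ is a bijection between ${\mathcal A}{\rm BD}$-structures on the trivial Jacobi-Jordan algebra $({\rm A},\circ)$ and anticommutative algebra structures on ${\rm A}$, which is the claimed statement.

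There is no genuine obstacle here; the only point that must be verified is that the anti-biderivation identity carries no content in this case, and this is immediate from the fact that every $\circ$-product is zero. This result should be viewed simply as the ${\mathcal A}{\rm BD}$-analogue of Proposition~\ref{52} (PJJ-structures on the zero algebra being anticommutative antiassociative algebras), the difference being that here the only surviving axiom is skew-symmetry, whereas there the identity \eqref{Eq1..} additionally forces antiassociativity.
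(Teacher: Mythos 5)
Your argument is correct and is exactly the (omitted) reasoning the paper intends: with $\circ=0$ the anti-Leibniz condition in the definition of a skew-symmetric anti-biderivation is vacuous, so an ${\mathcal A}{\rm BD}$-structure is just a skew-symmetric, i.e.\ anticommutative, bilinear product on ${\rm A}$ --- the paper states the proposition without proof, precisely because of this immediacy, and your comparison with Proposition~\ref{52} matches the paper's intent.
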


\begin{proposition}
	{ 		
		Let $({\rm A},\circ)$ be a non-trivial  Jacobi-Jordan algebra. Then $({\rm A},\circ)$  admits a non-trivial  ${\mathcal A}{\rm BD}$-structure. 
			
	}
\end{proposition}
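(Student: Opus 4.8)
The plan is to observe that every post-Jacobi-Jordan structure is, in particular, an ${\mathcal A}{\rm BD}$-structure, and then simply invoke Theorem~\ref{Pro}. Indeed, by definition an ${\mathcal A}{\rm BD}$-structure on $({\rm A},\circ)$ is a skew-symmetric anti-biderivation $\delta$ of $({\rm A},\circ)$, i.e.\ a bilinear map satisfying
\[
\delta(x,y)=-\delta(y,x)\qquad\text{and}\qquad \delta(x,y\circ z)=-\bigl(\delta(x,y)\circ z+y\circ\delta(x,z)\bigr).
\]
These are exactly the identities \eqref{Eq1} and \eqref{Eq1.} from the definition of a PJJ-structure. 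Consequently, if $\star$ is a PJJ-structure on $({\rm A},\circ)$, then $\delta:=\star$ satisfies both of the required identities (the third PJJ-axiom \eqref{Eq1..} being simply dropped), so $\star$ is an ${\mathcal A}{\rm BD}$-structure on $({\rm A},\circ)$; moreover $\star$ is non-trivial as an ${\mathcal A}{\rm BD}$-structure precisely when it is non-trivial as a bilinear product.

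With this in hand, I would finish as follows. Since $({\rm A},\circ)$ is a non-trivial Jacobi-Jordan algebra, Theorem~\ref{Pro} provides a non-trivial PJJ-structure $\star$ on $({\rm A},\circ)$. By the observation above, $\star$ is then a non-trivial skew-symmetric anti-biderivation of $({\rm A},\circ)$, hence a non-trivial ${\mathcal A}{\rm BD}$-structure, which is exactly the assertion.

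There is no genuine obstacle here: the entire content is already contained in Theorem~\ref{Pro}, and the proof reduces to the elementary remark that the axioms of a PJJ-structure include those of a skew-symmetric anti-biderivation. If one prefers a self-contained argument, one can instead re-run verbatim the construction in the proof of Theorem~\ref{Pro} --- choosing a codimension-one ideal $I\supseteq{\rm A}^{2}$, a generator $e_{1}$ of a complement of $I$, and an element $\mathbf c\in\bigl({\rm Ann}({\rm A})\cap{\rm A}^{2}\bigr)\setminus\{0\}$ --- and verify only the identities \eqref{Eq1} and \eqref{Eq1.} for the resulting $\delta$, which is the same short computation already carried out there.
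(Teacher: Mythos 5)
Your proposal is correct and takes essentially the same route as the paper: the paper likewise deduces the result from Theorem~\ref{Pro}, using exactly your observation that the PJJ axioms \eqref{Eq1} and \eqref{Eq1.} are precisely the defining identities of a skew-symmetric anti-biderivation, so the non-trivial PJJ-structure constructed there is already a non-trivial ${\mathcal A}{\rm BD}$-structure on $({\rm A},\circ)$. The paper's proof only adds the side observation that when ${\rm Ann}({\rm A})\nsubseteq {\rm A}^{2}$ one may replace $\mathbf{c}$ by an annihilator element $e_{1}$ to obtain an ${\mathcal A}{\rm BD}$-structure that is not a PJJ-structure, which is extra information not needed for the statement itself.
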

\begin{proof}
	{\rm We deduce from Theorem \ref{Pro} that the following product define a non-trivial ${\mathcal A}{\rm BD}$-structure on $({\rm A},\circ)$: $$\big(x=\sum_{i=1}^{n}\alpha_{i}e_{i}\big)\star \big(y=\sum_{j=1}^{n}\beta_{j}e_{j}\big)\ =\ \alpha_{1}\beta_{1}(\beta_{1}-\alpha_{1})\mathbf{c},\;\;\; \mathbf{c}\in {\rm Ann}({\rm A})\cap {\rm A}^{2}.$$   

Now assume that ${\rm Ann}({\rm A})\nsubseteq {\rm A}^{2}$. Then we can choose $e_{1}\in {\rm Ann}({\rm A})$ and define a non-trivial ${\mathcal A}{\rm BD}$-structure on ${\rm A}$  as before but replacing $\mathbf{c}$ by $e_{1}$.  Note that the last product does not define a PJJ-structure because of  \eqref{Eq1..}. 

}
\end{proof}

\section{Examples}

 The classification of two-dimensional algebras over an algebraically closed field was established in \cite{KV19}. We present a classification of two-dimensional ${\mathcal A}{\rm BD}$-algebras, which
can be achieved by direct calculations.

\begin{proposition}
Let $({\mathcal A}, \cdot)$ be a non-trivial  $2$-dimensional ${\mathcal A}{\rm BD}$-algebra, then it is isomorphic to one algebra listed below.
\begin{longtable}{llllllllllllllllll}
${\mathcal A}^2_{1}$& $:$&  $ e_1\cdot e_1 = e_2$ \\
${\mathcal A}^2_{2}$&$:$&  $ e_1\cdot e_2 = e_2$ & $ e_2\cdot e_1 = -e_2$
\end{longtable}\noindent
In particular the varieties of $2$-dimensional ${\mathcal A}{\rm BD}$-algebras and 
the variety of $2$-dimensional nilalgebras with nildex $3$ are coinciding. 
\end{proposition}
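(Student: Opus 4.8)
The plan is to prove both assertions at once—the explicit list and the coincidence of the two varieties—by a hands-on structural classification. One direction of the coincidence is immediate: by Proposition~\ref{3nil} every $\mathcal{A}{\rm BD}$-algebra is a nilalgebra of nilindex $3$, so it remains only to classify the non-trivial $2$-dimensional nilalgebras satisfying $(x\cdot x)\cdot x = x\cdot(x\cdot x)=0$ (the two associations of $x^{3}$) and to verify that every algebra appearing in that classification is in fact an $\mathcal{A}{\rm BD}$-algebra. I would split the analysis according to whether $A$ is anticommutative, i.e.\ whether $x\cdot x=0$ for all $x$; over $\Co$ this is a genuine dichotomy by polarization (since $x\circ y=2\,x\cdot y$).

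In the anticommutative case one has $x\circ y\equiv 0$, so the defining identities \eqref{Def1} and \eqref{Def2} hold trivially on both sides; hence \emph{every} anticommutative algebra is an $\mathcal{A}{\rm BD}$-algebra, and $x^{2}=0$ makes it automatically a nilalgebra of nilindex $\le 3$. If such an algebra is non-trivial, then $A^{2}$ is the span of a single nonzero vector $\mathbf c=[e_{1},e_{2}]$; choosing any $a\notin\Co\mathbf c$ one has $a\cdot\mathbf c=\mu\mathbf c$ with $\mu\neq 0$ (otherwise $A^{2}=0$), and rescaling $a$ so that $\mu=1$ and setting $e_{1}=a$, $e_{2}=\mathbf c$ gives exactly $\mathcal A^{2}_{2}$.

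In the non-anticommutative case, pick $x$ with $v:=x\cdot x\neq 0$. If $v=\lambda x$ with $\lambda\neq 0$, then $(x\cdot x)\cdot x=\lambda^{2}x\neq 0$, contradicting nilindex $3$; hence $\{e_{1},e_{2}\}:=\{x,v\}$ is a basis with $e_{1}\cdot e_{1}=e_{2}$. Expanding $e_{1}\cdot e_{2}$, $e_{2}\cdot e_{1}$, $e_{2}\cdot e_{2}$ in this basis, the identities $(e_{1}\cdot e_{1})\cdot e_{1}=0$ and $e_{1}\cdot(e_{1}\cdot e_{1})=0$ force $e_{2}\cdot e_{1}=e_{1}\cdot e_{2}=0$; then $(e_{2}\cdot e_{2})\cdot e_{2}=0$ forces $e_{2}\cdot e_{2}=r_{1}e_{1}$ for a scalar $r_{1}$; and finally testing $(x\cdot x)\cdot x=0$ at $x=e_{1}+e_{2}$ gives $r_{1}(e_{1}+e_{2})=0$, so $e_{2}\cdot e_{2}=0$. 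This is precisely $\mathcal A^{2}_{1}$, which is commutative with all triple products vanishing, hence is a Jacobi-Jordan algebra and in particular an $\mathcal{A}{\rm BD}$-algebra. Since $\mathcal A^{2}_{1}$ is commutative and $\mathcal A^{2}_{2}$ is not, the two are non-isomorphic, and the list and the equality of varieties follow.

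The bulk of the work is routine bookkeeping of structure constants; the only point requiring a small idea is pinning down the coefficient of $e_{2}\cdot e_{2}$, which is invisible to the nilindex-$3$ identities applied to $e_{1}$ or $e_{2}$ alone and only falls out after testing the mixed element $e_{1}+e_{2}$. I would also be careful to use \emph{both} associations of $x^{3}$: using only $(x\cdot x)\cdot x=0$ leaves spurious algebras such as $e_{1}\cdot e_{1}=e_{2}$, $e_{1}\cdot e_{2}=e_{2}$, which is not an $\mathcal{A}{\rm BD}$-algebra. (If a slicker argument is preferred, one may rule out the ``perfect'' case $A^{2}=A$ at the outset by invoking that a finite-dimensional power-associative nilalgebra is nilpotent, which applies to $\mathcal{A}{\rm BD}$-algebras by Proposition~\ref{3nil}.)
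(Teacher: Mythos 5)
Your argument is correct, and it is a complete version of the ``direct calculations'' the paper only alludes to; the only difference in route is that the paper points to the known two-dimensional classification of \cite{KV19} (one would filter that list by the $\mathcal{A}{\rm BD}$ identities), whereas you reprove the classification from scratch by first reducing, via Proposition \ref{3nil}, to algebras satisfying $(x\cdot x)\cdot x=x\cdot(x\cdot x)=0$ and then working out the structure constants. This buys you a self-contained proof that simultaneously yields the second assertion (the coincidence with two-dimensional nilalgebras of nilindex $3$), since you classify the nil-$3$ algebras directly and observe that each of them ($\mathcal{A}^2_1$ being Jacobi--Jordan, $\mathcal{A}^2_2$ being anticommutative, where \eqref{Def1} and \eqref{Def2} hold trivially) is an $\mathcal{A}{\rm BD}$-algebra; your cautionary remarks (using both associations of $x^3$, and testing the mixed element $e_1+e_2$ to kill $e_2\cdot e_2$) are exactly the points where a sloppier computation would go wrong. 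One cosmetic slip: the parenthetical justification ``since $x\circ y=2\,x\cdot y$'' is not an identity in a general algebra; what you mean is the polarization $x\circ y=(x+y)\cdot(x+y)-x\cdot x-y\cdot y$, so that $x\cdot x=0$ for all $x$ is indeed equivalent to anticommutativity in characteristic not $2$ --- the dichotomy you use is fine once stated this way.
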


The above-mentioned Proposition can create an illusion that all ${\mathcal A}{\rm BD}$-algebras are commutative or anticommutative and coincide with the variety of nilalgebras with index $3$, but it is not the case. Below,
we give a classification of $3$-dimensional ${\mathcal A}{\rm BD}$-algebras, which provides examples of non-(anti)-commutative ${\mathcal A}{\rm BD}$-algebras and distinction of varieties of 
${\mathcal A}{\rm BD}$-algebras and nilalgebras with nilidex $3$.  
The classification of complex $3$-dimensional nilalgebras was established in \cite{ks25} and 
the classification of complex $3$-dimensional flexible nilalgebras with nilindex $3$ was established in \cite{AABS}. We present a classification of $3$-dimensional ${\mathcal A}{\rm BD}$-algebras, which can be achieved by direct calculations.

\begin{theorem}\label{3cl} 
Let $({\mathcal A}, \cdot)$ be a complex $3$-dimensional ${\mathcal A}{\rm BD}$-algebra. 
Then $({\mathcal A},\cdot)$ is an anticommutative\footnote{They are  given in \cite{ks25}.} or commutative\footnote{They are  given in \cite{Burd}.} algebra,   
or
 it is isomorphic to one of the following algebras:

\begin{longtable}{lllllllll}
${\mathcal A}^3_{1}$ & $:$ & $e_{1}\cdot e_{1} = e_{2}$ &  $e_{1}\cdot e_{3} = e_{3}$  & $e_{3}\cdot e_{1} = -e_{3}$  \\ 
${\mathcal A}^3_{2}$ & $:$ & $e_{1}\cdot e_{1} = e_{2}$ &  $e_{1}\cdot e_{3} = e_{2}$  & $e_{3}\cdot e_{1} = -e_{2}$  \\ 

${\mathcal A}^3_{3}({\alpha\neq0})$ & $:$ & $e_{1}\cdot e_{1} = e_{2}$ & $e_{1}\cdot e_{3} = \alpha e_{2}$  & $e_{3}\cdot e_{1} = - \alpha e_{2}$& $e_{3}\cdot e_{3} = e_{2}$ & \\ 

\end{longtable}
\noindent All listed algebras are non-isomorphic except: ${\mathcal A}_{3}^3({\alpha
})\cong {\mathcal A}_{3}^3({-\alpha })$.
In particular, 
\begin{enumerate}
    \item[{\bf A}.] the variety of $3$-dimensional nilpotent nilalgebras with nilindex $3$ and 
the variety of $3$-dimensional nilpotent ${\mathcal A}{\rm BD}$-algebra
are coinciding$;$
\item[{\bf B}.] the variety of $3$-dimensional flexible nilalgebras with nilindex $3$ and 
the variety of $3$-dimensional  ${\mathcal A}{\rm BD}$-algebra
are coinciding$;$
\item[{\bf C}.] there are only two non-${\mathcal A}{\rm BD}$ nilalgebras with nilindex $3,$ listed in {\rm \cite[Theorem 7]{ks25}} as ${\mathcal N}_2$ and  ${\mathcal N}_5.$

\end{enumerate}\end{theorem}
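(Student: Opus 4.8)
The plan is to reduce the classification to a short computation with Jacobi--Jordan algebras, exploiting the correspondence recorded after Proposition \ref{pr2}: giving an $\mathcal A{\rm BD}$-structure on a vector space $\mathcal A$ is the same as giving a Jacobi--Jordan product $\circ$ on $\mathcal A$ together with a skew-symmetric anti-biderivation $[\cdot,\cdot]$ of $(\mathcal A,\circ)$, the associated product being $x\cdot y=\tfrac12\bigl([x,y]+x\circ y\bigr)$; and a linear map is an isomorphism of $\mathcal A{\rm BD}$-algebras precisely when it simultaneously intertwines the $\circ$-products and the brackets, which is immediate from the polarization formulas. So it suffices to run over the complex $3$-dimensional Jacobi--Jordan algebras — the finite list of \cite{Burd}, consisting in dimension $3$ of the abelian one and the two non-abelian ones $\mathfrak j_1:\ e_1\circ e_1=2e_2$ and $\mathfrak j_2:\ e_1\circ e_1=e_3\circ e_3=2e_2$ (equivalently $e_1\circ e_3=e_3\circ e_1=e_2$) — and, for each, determine the skew-symmetric anti-biderivations up to the action of the automorphism group. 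The choice $\circ=0$ recovers exactly the anticommutative $3$-dimensional algebras, which are all $\mathcal A{\rm BD}$ and are classified in \cite{ks25}; the choice $[\cdot,\cdot]=0$ recovers the commutative ones, i.e.\ $\mathfrak j_1$, $\mathfrak j_2$ and the abelian algebra. (That every resulting algebra is a nilalgebra of nilindex $3$ is guaranteed by Proposition \ref{3nil}.)

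For a fixed $(\mathcal A,\circ)$ a skew-symmetric anti-biderivation is determined by the three vectors $[e_i,e_j]$, $1\le i<j\le 3$, subject to the linear relations obtained by evaluating $[x,y\circ z]=-\bigl([x,y]\circ z+y\circ[x,z]\bigr)$ on basis triples; besides the relations coming from the nonzero products one must also impose those coming from the \emph{vanishing} products $e_i\circ e_j=0$, and it is precisely these last relations that shrink the solution space. Carrying this out for $\mathfrak j_1$ leaves a two-parameter family of brackets, and for $\mathfrak j_2$ a one-parameter family. Acting by $\mathrm{Aut}(\mathcal A^{+})$ together with the remaining basis rescalings, one reduces $\mathfrak j_1$ to the two non-commutative non-anticommutative representatives $\mathcal A^3_1$ and $\mathcal A^3_2$ (besides the zero bracket, which is $\mathfrak j_1$ itself), and $\mathfrak j_2$ to the family $\mathcal A^3_3(\alpha)$ with $\alpha\ne 0$ (the value $\alpha=0$ giving $\mathfrak j_2$). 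Pairwise non-isomorphism is settled by elementary invariants: $\dim\mathcal A^2$ equals $2$ for $\mathcal A^3_1$ and $1$ for $\mathcal A^3_2,\mathcal A^3_3(\alpha)$, while the rank of the quadratic map $x\mapsto x^2$ equals $1$ for $\mathcal A^3_1,\mathcal A^3_2$ and $2$ for $\mathcal A^3_3(\alpha)$; and the interchange $e_1\leftrightarrow e_3$ gives $\mathcal A^3_3(\alpha)\cong\mathcal A^3_3(-\alpha)$ with no further identification.

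It remains to deduce the three ``in particular'' assertions. For \textbf{A} and \textbf{C} one compares the list with the classification of complex $3$-dimensional nilalgebras of nilindex $3$ in \cite[Theorem 7]{ks25}: by Proposition \ref{pr13} such an algebra is an $\mathcal A{\rm BD}$-algebra if and only if it satisfies \eqref{x3abd}, and checking this identity on each entry of that list shows it holds for all of them except the two denoted $\mathcal N_2$ and $\mathcal N_5$; since a nilpotent $3$-dimensional nilalgebra of nilindex $3$ is therefore always an $\mathcal A{\rm BD}$-algebra, one gets \textbf{A}, while $\mathcal N_2$ and $\mathcal N_5$ (which are then necessarily non-nilpotent) give \textbf{C}. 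For \textbf{B} one notes that every algebra occurring in our classification is flexible — commutativity implies flexibility, every anticommutative algebra is flexible since there $(x,y,x)=(x\cdot y)\cdot x+x\cdot(x\cdot y)=0$, and $\mathcal A^3_1,\mathcal A^3_2,\mathcal A^3_3(\alpha)$ are flexible by inspection (the last two are even associative) — so the variety of $3$-dimensional $\mathcal A{\rm BD}$-algebras is contained in that of $3$-dimensional flexible nilalgebras of nilindex $3$; comparison with the classification of the latter in \cite{AABS} yields the reverse inclusion and hence the stated coincidence.

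The main obstacle is the isomorphism bookkeeping in the middle step: reducing the parameter families of skew-symmetric anti-biderivations over $\mathfrak j_1$ and $\mathfrak j_2$ to normal forms under the (non-reductive) automorphism groups of these Jacobi--Jordan algebras, and then certifying that $\mathcal A^3_1,\mathcal A^3_2,\mathcal A^3_3(\alpha)$, together with the commutative and the anticommutative algebras, are pairwise non-isomorphic. One must also take care not to drop the anti-biderivation constraints coming from the vanishing products $e_i\circ e_j=0$. Everything else is the routine linear algebra to which the phrase ``by direct calculations'' refers.
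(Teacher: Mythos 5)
Your proposal is correct, and the key counts check out: over $\mathfrak j_1\colon e_1\circ e_1=2e_2$ the skew-symmetric anti-biderivations form exactly the two-parameter family $[e_1,e_3]=\beta e_2+\gamma e_3$, over $\mathfrak j_2\colon e_1\circ e_1=e_3\circ e_3=2e_2$ exactly the one-parameter family $[e_1,e_3]=2\alpha e_2$, and the reduction modulo the automorphism groups gives precisely ${\mathcal A}^3_1$ (case $\gamma\neq0$), ${\mathcal A}^3_2$ (case $\gamma=0,\beta\neq0$) and ${\mathcal A}^3_3(\alpha)$, with ${\mathcal A}^3_3(\alpha)\cong{\mathcal A}^3_3(\alpha')$ forcing $\alpha'=\pm\alpha$ because an automorphism of $\mathfrak j_2$ acts on $\langle e_1,e_3\rangle$ modulo $\langle e_2\rangle$ by a scaled complex orthogonal matrix, so $ad-bc=\pm(a^2+c^2)$. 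Your route is, however, organized differently from the paper's. The paper's (unwritten) argument is the opposite filtration: by Proposition \ref{3nil} every ${\mathcal A}{\rm BD}$-algebra is a nilalgebra of nilindex $3$, so one runs through the known classification of complex $3$-dimensional nilalgebras of nilindex $3$ in \cite[Theorem 7]{ks25} (and of the flexible ones in \cite{AABS}) and tests the single identity \eqref{x3abd} of Proposition \ref{pr13} on each entry; this produces the list and the statements {\bf A}, {\bf B}, {\bf C} simultaneously. You instead rebuild the list ab initio from the one-to-one correspondence recorded after Proposition \ref{pr2} — pairs consisting of a $3$-dimensional Jacobi--Jordan product from \cite{Burd} together with a skew-symmetric anti-biderivation, classified modulo the automorphism group — which is exactly the technique the authors use elsewhere (footnote to the final question), and you invoke \cite{ks25} and \cite{AABS} only for the ``in particular'' items. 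Your version is more self-contained for the main list and explains structurally why only the symmetric parts $\mathfrak j_1,\mathfrak j_2$ and the abelian algebra can occur; the paper's version gets {\bf A}--{\bf C} for free because it starts from the very lists those items compare against. Note that your proofs of {\bf A} and {\bf C} still reduce to the same finite check of \eqref{x3abd} against \cite[Theorem 7]{ks25} (including the observation that ${\mathcal N}_2,{\mathcal N}_5$ are not nilpotent), and the reverse inclusion in {\bf B} to the comparison with \cite{AABS}, so on those points the two approaches coincide; the flexibility of all listed algebras, which you use for the forward inclusion in {\bf B}, is easily verified (commutative and anticommutative algebras are flexible, ${\mathcal A}^3_2$ and ${\mathcal A}^3_3(\alpha)$ are even associative, and $(x,y,x)=0$ holds in ${\mathcal A}^3_1$ by direct computation).
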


The above-mentioned Theorem can create an illusion that  
the variety of  flexible nilalgebras with nilindex $3$ and 
the variety of ${\mathcal A}{\rm BD}$-algebra are coinciding and 
all nilpotent nilalgebras with nilindex $3$ are  ${\mathcal A}{\rm BD}$-algebras,   but it is not the case. Below,
we give a classification of nilpotent $4$-dimensional ${\mathcal A}{\rm BD}$-algebras, which justify the difference between these two mentioned varieties.   
The classification of complex $4$-dimensional nilpotent nilalgebras  with nilindex $3$ was established in \cite{AKK}. 
We present a classification of $4$-dimensional nilpotent ${\mathcal A}{\rm BD}$-algebras, which can be achieved by direct calculations.

\begin{proposition}
    The minimal dimension of non-flexible     ${\mathcal A}{\rm BD}$-algebras is $4$.
\end{proposition}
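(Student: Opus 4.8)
The plan is to prove two things: (i) every ${\mathcal A}{\rm BD}$-algebra of dimension at most $3$ is flexible, and (ii) there exists a non-flexible ${\mathcal A}{\rm BD}$-algebra of dimension $4$. The main tool is a reformulation of flexibility for ${\mathcal A}{\rm BD}$-algebras. For a Jacobi-Jordan-admissible algebra, Proposition \ref{pr2} gives that the ${\mathcal A}{\rm BD}$ property is equivalent to $[x,y\circ z]=-\big([x,y]\circ z+y\circ[x,z]\big)$, while by \eqref{Leibniz3} flexibility is equivalent to $[x,y\circ z]=[x,y]\circ z+y\circ[x,z]$. Adding and subtracting these, an ${\mathcal A}{\rm BD}$-algebra $({\rm A},\cdot)$ is flexible if and only if $[x,y\circ z]=0$ for all $x,y,z$, that is, $({\rm A}^+)^2\subseteq{\rm Ann}({\rm A}^-)$.

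For (i): by Lemma \ref{bdnov}, $({\rm A}^+)^2={\rm A}\circ{\rm A}$ is a two-sided ideal of $({\rm A},\cdot)$, and when $\dim{\rm A}\le 3$ it is at most one-dimensional, since ${\rm A}^+$ is then a nilpotent Jacobi-Jordan algebra with $\dim{\rm A}^+/({\rm A}^+)^2\le 2$, and evaluating the Jacobi identity at equal arguments (which yields $e\circ(e\circ e)=0$) together with nilpotency forces $\dim({\rm A}^+)^2\le 1$. So it suffices to show that $\dim({\rm A}^+)^2\le 1$ implies $({\rm A}^+)^2\subseteq{\rm Ann}({\rm A}^-)$. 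Assume $({\rm A}^+)^2=\langle w\rangle\neq 0$ and, for contradiction, $[x_0,w]\neq 0$ for some $x_0$. Since $({\rm A}^+)^2$ is an ideal, $x_0\cdot w=\mu w$ and $w\cdot x_0=\nu w$ with $\mu\neq\nu$; nilpotency of ${\rm A}^+$ applied to ${\rm L}_{x_0}^{\circ}$ forces $\mu+\nu=0$, hence $\mu\neq 0$, and then $x_0\cdot x_0=\tfrac12(x_0\circ x_0)\in\langle w\rangle$ together with $x_0\cdot(x_0\cdot x_0)=0$ (Proposition \ref{3nil}) forces $x_0\cdot x_0=0$. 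Now for any $y$ put $z=x_0+y$, so $z\cdot z=(x_0\circ y)+y\cdot y\in\langle w\rangle$, say $z\cdot z=\tau(y)\,w$; writing also $y\cdot w=\rho(y)\,w$ with $\rho(0)=0$, the identity $z\cdot(z\cdot z)=0$ becomes $\tau(y)\big(\mu+\rho(y)\big)w=0$. As $\mu+\rho(y)$ is a nonzero polynomial in $y$, we get $\tau\equiv 0$, i.e. $z\cdot z=0$ for all $z$, contradicting $({\rm A}^+)^2\neq 0$. (Alternatively, (i) is immediate from the $2$-dimensional classification and from Theorem \ref{3cl}, since every algebra in those lists is flexible.)

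For (ii): let ${\rm A}^+=\langle e_1,e_2,e_3,e_4\rangle$ be the Jacobi-Jordan algebra with $e_1\circ e_1=e_4$, $e_1\circ e_2=e_3$ and all remaining products zero, and let $[\cdot,\cdot]$ be the anticommutative product determined by $[e_1,e_2]=e_2$, $[e_1,e_3]=-e_3$, $[e_2,e_4]=2e_3$ and all remaining brackets zero. A direct check shows that $[x,\cdot]$ is an antiderivation of ${\rm A}^+$ for every $x$, so $[\cdot,\cdot]$ is a skew-symmetric anti-biderivation of ${\rm A}^+$; hence by Proposition \ref{pr2} the $4$-dimensional algebra $({\rm A},\cdot)$ with $x\cdot y=\tfrac12\big([x,y]+x\circ y\big)$ is an ${\mathcal A}{\rm BD}$-algebra. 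It is not flexible, because $[e_1,e_1\circ e_2]=[e_1,e_3]=-e_3\neq 0$, i.e. $({\rm A}^+)^2\not\subseteq{\rm Ann}({\rm A}^-)$; concretely $(e_1,e_1,e_2)+(e_2,e_1,e_1)=-e_3$. Together with (i), this establishes the statement; one may also extract such an algebra directly from the classification of Theorem \ref{4ncl}.

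The step I expect to be the real obstacle is (ii): one must simultaneously keep ${\rm A}^+$ a genuine Jacobi-Jordan algebra, keep $[\cdot,\cdot]$ a bona fide skew-symmetric anti-biderivation, and arrange that $({\rm A}^+)^2$ escapes the commuting centre of ${\rm A}^-$. These requirements are incompatible below dimension $4$ (a Jacobi-Jordan algebra with two-dimensional derived algebra would have to be $2$-generated, which cannot happen in dimension $\le 3$), so the content lies precisely in making them all fit at dimension $4$; step (i), by contrast, reduces to the structural criterion above plus a short polarisation argument (or a glance at the already-established classifications).
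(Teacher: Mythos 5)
Your argument is correct, but it is routed differently from the paper's, whose entire proof consists of exhibiting the explicit non-flexible algebra ${\mathcal A}^4_0$ and invoking Theorem \ref{3cl} for the statement that every $3$-dimensional ${\mathcal A}{\rm BD}$-algebra is flexible. Your treatment of dimensions $\le 3$ is instead structural: combining Proposition \ref{pr2} with \eqref{Leibniz3} you reduce flexibility of an ${\mathcal A}{\rm BD}$-algebra to $({\rm A}^{+})^{2}\subseteq {\rm Ann}({\rm A}^{-})$, observe that $\dim ({\rm A}^{+})^{2}\le 1$ in dimension at most $3$, and exclude a non-annihilating one-dimensional $({\rm A}^{+})^{2}$ via Lemma \ref{bdnov}, the nilpotency of ${\rm L}^{\circ}_{x_0}$ and the nilindex-$3$ identity of Proposition \ref{3nil}; this buys independence from the classification theorems (which the paper only asserts are obtained ``by direct calculations''), at the cost of using the standard facts that finite-dimensional Jacobi--Jordan algebras are nilpotent (also used in the paper, in the proof of Theorem \ref{Pro}) and that a one-generated Jacobi--Jordan algebra has dimension at most $2$ --- for the latter your parenthetical $e\circ(e\circ e)=0$ should be supplemented by $(e\circ e)\circ(e\circ e)=0$, which follows from the Jacobi identity, to conclude $\dim({\rm A}^{+})^{2}\le 1$ in dimension $3$. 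Your dimension-$4$ witness differs from the paper's ${\mathcal A}^4_0$ but is legitimately built through the correspondence of Proposition \ref{pr2} (Jacobi--Jordan product plus skew-symmetric anti-biderivation), and it checks out: each $[e_i,\cdot\,]$ is indeed an antiderivation of the given $\circ$, and $[e_1,e_1\circ e_2]=-e_3\neq 0$ (equivalently $(e_1,e_1,e_2)+(e_2,e_1,e_1)=-e_3$) rules out flexibility. So both halves are sound; the paper's proof is shorter because it leans on Theorem \ref{3cl}, while yours is self-contained and isolates the useful criterion that an ${\mathcal A}{\rm BD}$-algebra is flexible precisely when $({\rm A}^{+})^{2}$ annihilates ${\rm A}^{-}$.
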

\begin{proof}
    The present algebra 
    \begin{longtable}{llllllll}
${\mathcal A}^4_0$ & $:$& $e_1e_1=e_2$ & $e_1e_3=-e_3+e_4$ & $e_1e_4=e_4$ & 
$e_2e_4=2e_4$\\ & && $e_3e_1=e_3+e_4$ & $e_4e_1=-e_4$ & $e_4e_2=-2e_4$ 
    \end{longtable}
 \noindent   is a $4$-dimensional non-flexible ${\mathcal A}{\rm BD}$-algebra, 
    but thanks to Theorem \ref{3cl}, each $3$-dimensional  ${\mathcal A}{\rm BD}$-algebra is flexible.
\end{proof}

\begin{theorem}\label{4ncl} 
Let $({\mathcal A},\cdot)$ be a complex $4$-dimensional nilpotent ${\mathcal A}{\rm BD}$-algebra. 
Then $({\mathcal A},\cdot)$ is a $2$-step nilpotent algebra or isomorphic to one algebra from the following list:

\begin{longtable}{llllllllllllll}

$\mathcal{A}^4_{1}$ & $: $&  $e_1\cdot e_2 = e_3$ & $ e_1\cdot e_3=e_4$ 
&  $e_2\cdot e_1 =- e_3$ & $ e_3\cdot e_1=-e_4$\\ 
 
$\mathcal{A}^4_{2}$  & $: $& $e_1\cdot e_2=-e_3+e_4$ & $e_1\cdot e_3=-e_4$ &  $e_2\cdot e_1 = e_3$ &   $e_3\cdot e_1=e_4$ \\

$\mathcal{A}^4_{3}$ & $: $& $e_1\cdot e_2=-e_3$ & $e_1\cdot e_3=-e_4$ &  $e_2\cdot e_1 = e_3$ &   $e_2\cdot e_2=e_4$  & $e_3\cdot e_1=e_4$   \\

$\mathcal{A}^4_{4}$ & $: $ & $e_1\cdot e_1 = e_4$ &  $e_1\cdot e_2=-e_3$ & $e_1\cdot e_3=-e_4$\\& &  $e_2\cdot e_1 = e_3$ & $e_2\cdot e_2=e_4$ & $e_3\cdot e_1=e_4$ \\ 
 
$\mathcal{A}^4_{5}$ & $: $& $e_1\cdot e_1 = e_4$ & $e_1\cdot e_2=-e_3$ & $e_1\cdot e_3=-e_4$ & $e_2\cdot e_1 = e_3$ &   $e_3\cdot e_1=e_4$ \\ 
\end{longtable}
\noindent 
In particular, 
\begin{enumerate}
    \item[{\bf A.}] the varieties of $4$-dimensional nilpotent symmetric Leibniz algebras 
and $4$-dimensional nilpotent ${\mathcal A}{\rm BD}$-algebras are coinciding$;$
\item[{\bf B.}]  there are infinitely many $4$-dimensional non-${\mathcal A}{\rm BD}$ nilpotent nilalgebras with nilindex $3,$ listed in {\rm \cite[Theorem C]{AKK}} as $\mathbb{M}_i.$ 

\end{enumerate}
\end{theorem}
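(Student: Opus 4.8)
The plan is to piggyback on the classification of complex $4$-dimensional nilpotent nilalgebras with nilindex $3$ obtained in \cite{AKK}. By Proposition \ref{3nil} every $\mathcal{A}{\rm BD}$-algebra is a nilalgebra with nilindex $3$, so every $4$-dimensional nilpotent $\mathcal{A}{\rm BD}$-algebra is isomorphic to a member of that list; moreover, passing from a classification up to isomorphism to a subvariety introduces no new isomorphisms, so it suffices to decide, for each algebra of the list, whether it is an $\mathcal{A}{\rm BD}$-algebra. By Proposition \ref{pr13} this reduces to checking the single identity \eqref{x3abd}, equivalently $(x\cdot y)\cdot z+(x\cdot z)\cdot y=z\cdot (y\cdot x)+y\cdot (z\cdot x)$, which for an algebra presented by an explicit structure table is a finite linear computation. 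An alternative route is to compute directly the second cohomology of the three-dimensional $\mathcal{A}{\rm BD}$-algebras of Theorem \ref{3cl} and classify their central extensions, but then non-isomorphism has to be established by hand, whereas along the route above it is inherited from \cite{AKK}.

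First I would dispose of the $2$-step nilpotent algebras at once: in such an algebra every triple product vanishes, so both sides of \eqref{x3abd} are identically $0$ and the algebra is automatically an $\mathcal{A}{\rm BD}$-algebra; these algebras form the first alternative of the statement and are not listed individually. For the remaining (genuinely $3$-step) members $\mathbb{M}_i$ of \cite{AKK} I would substitute the structure constants into \eqref{x3abd} — or directly into \eqref{Def1}–\eqref{Def2} — and retain exactly those satisfying it; after rewriting the survivors in convenient bases this yields $\mathcal{A}^4_1,\dots,\mathcal{A}^4_5$. These five are pairwise non-isomorphic (inherited from \cite{AKK}), and each has ${\rm A}^3\neq\{0\}$, hence is not $2$-step nilpotent; to keep the argument self-contained one can also record distinguishing invariants such as $\dim {\rm A}^2$, $\dim {\rm A}^3$, $\dim {\rm Ann}({\rm A})$ and whether some generator squares to a nonzero element.

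It remains to settle the two particular assertions. For {\bf A}, I would match the list just obtained — the five algebras together with the $4$-dimensional $2$-step nilpotent algebras — against the known classification of $4$-dimensional nilpotent symmetric Leibniz algebras and check that the two coincide; equivalently, verify that each $\mathcal{A}^4_i$ satisfies both Leibniz identities and, conversely, that every $4$-dimensional nilpotent symmetric Leibniz algebra has nilindex $3$ and satisfies \eqref{x3abd}. For {\bf B}, I would isolate, inside the parametrised families $\mathbb{M}_i$ of \cite[Theorem C]{AKK}, a one-parameter subfamily for which the criterion of Proposition \ref{pr13} fails for a fixed choice of arguments; since distinct parameters give non-isomorphic algebras, this produces infinitely many $4$-dimensional nilpotent nilindex-$3$ algebras that are not $\mathcal{A}{\rm BD}$.

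The real work, and the only genuine obstacle, is the bookkeeping: running the linear test \eqref{x3abd} across the whole \cite{AKK} list, handling the $2$-step nilpotent case uniformly so that the first alternative is exhaustive, matching the surviving algebras to the normal forms $\mathcal{A}^4_1,\dots,\mathcal{A}^4_5$ (the bases used in \cite{AKK} need not be the displayed ones), and carrying out the comparison with the symmetric Leibniz classification required for {\bf A}.
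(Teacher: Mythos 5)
Your plan coincides with the paper's own (largely implicit) argument: the authors also obtain Theorem \ref{4ncl} by taking the classification of complex $4$-dimensional nilpotent nilalgebras of nilindex $3$ from \cite{AKK}, noting that $2$-step nilpotent algebras are automatically ${\mathcal A}{\rm BD}$, and filtering the remaining algebras by a direct check of the ${\mathcal A}{\rm BD}$ identities (the paper simply states the result ``can be achieved by direct calculations''), with part {\bf A} read off by comparison with the symmetric Leibniz classification of \cite{AK21} and part {\bf B} from the parametric families $\mathbb{M}_i$. So the proposal is correct in method and essentially identical to the paper's route; like the paper, it leaves the finite structure-constant computations unexecuted.
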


\begin{proposition}
    The minimal dimension of 
    nilpotent (non-symmetric Leibniz) ${\mathcal A}{\rm BD}$-algebras is $5.$
\end{proposition}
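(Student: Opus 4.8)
The plan is to establish two things: first, the lower bound --- that every nilpotent ${\mathcal A}{\rm BD}$-algebra of dimension at most $4$ is a symmetric Leibniz algebra; and second, the upper bound --- that there is a $5$-dimensional nilpotent ${\mathcal A}{\rm BD}$-algebra which is not symmetric Leibniz.

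For the lower bound I would argue dimension by dimension. In dimensions $1$ and $2$, the classification of $2$-dimensional ${\mathcal A}{\rm BD}$-algebras leaves only the nilpotent representative ${\mathcal A}^2_1$, which is $2$-step nilpotent and hence symmetric Leibniz (all triple products vanish, so both Leibniz identities read $0=0$). In dimension $3$, Theorem \ref{3cl} lists all ${\mathcal A}{\rm BD}$-algebras: among them ${\mathcal A}^3_1$ is not nilpotent, ${\mathcal A}^3_2$ and ${\mathcal A}^3_3(\alpha)$ are $2$-step nilpotent, and the commutative (resp. anticommutative) nilpotent ones are $2$-step nilpotent Jacobi-Jordan algebras (resp. nilpotent Lie algebras) by the classifications in \cite{Burd} and \cite{ks25} --- here one uses that a nilpotent anticommutative algebra has $\dim {\rm A}/{\rm A}^2 \geq 2$, which in dimension $3$ forces $\dim {\rm A}^2 \leq 1$ and hence the Jacobi identity. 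In each of these subcases the algebra is symmetric Leibniz. In dimension $4$, Theorem \ref{4ncl}(A) asserts outright that the varieties of $4$-dimensional nilpotent ${\mathcal A}{\rm BD}$-algebras and of $4$-dimensional nilpotent symmetric Leibniz algebras coincide. Thus no nilpotent non-symmetric-Leibniz ${\mathcal A}{\rm BD}$-algebra exists below dimension $5$.

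For the upper bound I would exhibit the $5$-dimensional anticommutative algebra ${\mathcal A}^5_0$ with nonzero products $e_1\cdot e_2=e_4=-e_2\cdot e_1$, $e_2\cdot e_3=e_4=-e_3\cdot e_2$ and $e_1\cdot e_4=e_5=-e_4\cdot e_1$. Since it is anticommutative, the product $\circ$ vanishes identically, so the defining identities \eqref{Def1} and \eqref{Def2} both reduce to $0=0$ and ${\mathcal A}^5_0$ is an ${\mathcal A}{\rm BD}$-algebra (equivalently, ${\rm R}_x=-{\rm L}_x$, so the criterion \eqref{x3abd} of Proposition \ref{pr13} holds). A short computation shows that its square is $\langle e_4,e_5\rangle$, its cube is $\langle e_5\rangle$, and its fourth power is zero, so it is nilpotent. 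It is not symmetric Leibniz: an anticommutative algebra satisfying the left Leibniz identity is necessarily a Lie algebra, whereas here $e_1\cdot(e_2\cdot e_3)+e_2\cdot(e_3\cdot e_1)+e_3\cdot(e_1\cdot e_2)=e_1\cdot e_4=e_5\neq 0$, so the Jacobi identity fails. Combining the two bounds gives the value $5$.

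The step I expect to be the main obstacle is locating a correct $5$-dimensional example: it must be nilpotent, satisfy the ${\mathcal A}{\rm BD}$ identities, and fail to be symmetric Leibniz all at once. Restricting to anticommutative algebras is what makes this manageable, since there the ${\mathcal A}{\rm BD}$ axioms become vacuous and ``not symmetric Leibniz'' reduces to ``not a Lie algebra''; the non-existence of such algebras in dimensions $\leq 4$ is then exactly what the classification Theorems \ref{3cl} and \ref{4ncl} provide.
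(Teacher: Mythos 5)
Your proposal is correct and follows essentially the same route as the paper: the lower bound is obtained from the coincidence of $4$-dimensional nilpotent ${\mathcal A}{\rm BD}$-algebras with symmetric Leibniz algebras (Theorem \ref{4ncl}, with the dimensions $\leq 3$ handled by the earlier classifications), and the upper bound by exhibiting a $5$-dimensional nilpotent anticommutative non-Lie algebra, which is automatically ${\mathcal A}{\rm BD}$ and cannot be symmetric Leibniz. Your witness ($e_1\cdot e_2=e_4$, $e_2\cdot e_3=e_4$, $e_1\cdot e_4=e_5$) differs from the paper's example ($e_1\cdot e_2=e_3$, $e_2\cdot e_3=e_4$, $e_3\cdot e_4=e_5$), but it is equally valid and serves the same purpose, and you additionally spell out the verifications the paper leaves implicit.
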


\begin{proof}
    Thanks to Theorem \ref{4ncl}, the varieties of $4$-dimensional nilpotent symmetric Leibniz algebras and nilpotent  ${\mathcal A}{\rm BD}$-algebras  are coinciding. 
The following algebra gives an example of a $5$-dimensional 
non-symmetric Leibniz ${\mathcal A}{\rm BD}$-algebra: 
    
        \begin{longtable}{llllllll}
${\mathcal A}^5_0$ & $:$& $e_1\cdot e_2=e_3$ & $e_2\cdot e_1=-e_3$ & $e_2\cdot e_3=e_4$ & 
$e_3\cdot e_2=-e_4$ & $e_3\cdot e_4=e_5$ & $e_4\cdot e_3=-e_5.$      \end{longtable}
\end{proof}

\begin{definition}[see, \cite{MS,FKS}]
  An algebra $(\rm A, \cdot)$ is called rigid in a variety $\mathcal{V}$ if  the Zariski closure of his orbit under the action of ${\rm GL}(\rm A)$    is an open subset of $\mathcal{V}$. A subset of a variety is called irreducible if it cannot be represented as a union of two non-trivial closed subsets.
 A maximal irreducible closed subset of a variety is called an  irreducible component.

\end{definition}

\begin{corollary}
    The variety of complex $3$-dimensional  ${\mathcal A}{\rm BD}$-algebras has dimension $9$ and  
    $3$ irreducible components, defined by
    the generic family for anticommutative algebras, 
    the family of algebras  ${\mathcal A}^3_{3}({\alpha})$ and one rigid algebra ${\mathcal A}^3_{1}.$
\end{corollary}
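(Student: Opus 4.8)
Denote by $\mathcal{V}$ the variety of complex $3$-dimensional ${\mathcal A}{\rm BD}$-algebras, viewed as a Zariski-closed subset of the affine space of structure constants of $3$-dimensional algebras, on which ${\rm GL}_3(\mathbb{C})$ acts by change of basis; recall $\dim\mathcal{O}(A)=9-\dim\mathrm{Der}(A)$ for the orbit of an algebra $A$. By Theorem \ref{3cl}, $\mathcal{V}$ is the union of: the linear subspace $L$ of anticommutative products, which lies in $\mathcal{V}$ since any anticommutative algebra trivially satisfies \eqref{Def1}--\eqref{Def2}, and which is closed, irreducible, and of dimension $\dim\mathrm{Hom}(\Lambda^2\mathbb{C}^3,\mathbb{C}^3)=9$; the variety $\mathcal{J}$ of $3$-dimensional Jacobi-Jordan algebras, which by the classification in \cite{Burd} is closed and irreducible, being the orbit closure of $e_1^2=e_3,\ e_2^2=e_3$ (of orbit dimension $9-4=5$); and the ${\rm GL}_3$-orbits of ${\mathcal A}^3_1$, ${\mathcal A}^3_2$, and of the one-parameter family ${\mathcal A}^3_3(\alpha)$.

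\textbf{Orbit dimensions.} First I would run the (routine, linear) computation of derivations: it gives $\dim\mathrm{Der}({\mathcal A}^3_1)=3$ and $\dim\mathrm{Der}({\mathcal A}^3_3(\alpha))=4$ for $\alpha\neq0$, so $\dim\mathcal{O}({\mathcal A}^3_1)=6$ and $\dim\mathcal{O}({\mathcal A}^3_3(\alpha))=5$. Since ${\mathcal A}^3_3(\alpha)\cong{\mathcal A}^3_3(-\alpha)$ but no further identifications occur (Theorem \ref{3cl}), distinct generic values of $\alpha$ yield disjoint orbits, so $\mathcal{F}:=\overline{\bigcup_{\alpha\neq0}\mathcal{O}({\mathcal A}^3_3(\alpha))}$ is irreducible of dimension $5+1=6$.

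\textbf{Degenerations.} Next I would show $\mathcal{F}$ absorbs both $\mathcal{J}$ and $\overline{\mathcal{O}({\mathcal A}^3_2)}$. Letting $\alpha\to0$ in ${\mathcal A}^3_3(\alpha)$ produces the Jacobi-Jordan algebra $e_1^2=e_2,\ e_3^2=e_2$, whose orbit closure is all of $\mathcal{J}$, hence $\mathcal{J}\subseteq\mathcal{F}$; applying the basis change $e_3\mapsto\alpha^{-1}e_3$ to ${\mathcal A}^3_3(\alpha)$ gives the table $f_1^2=f_2,\ f_1f_3=f_2,\ f_3f_1=-f_2,\ f_3^2=\alpha^{-2}f_2$, and letting $\alpha\to\infty$ yields exactly ${\mathcal A}^3_2$, so $\overline{\mathcal{O}({\mathcal A}^3_2)}\subseteq\mathcal{F}$. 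Therefore $\mathcal{V}=L\cup\mathcal{F}\cup\overline{\mathcal{O}({\mathcal A}^3_1)}$, a union of three irreducible closed sets of dimensions $9$, $6$, $6$.

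\textbf{Components.} Finally I would verify pairwise incomparability. Since ${\mathcal A}^3_1$ and ${\mathcal A}^3_3(\alpha)$ are not anticommutative, neither $\overline{\mathcal{O}({\mathcal A}^3_1)}$ nor $\mathcal{F}$ is contained in $L$; as $\dim L=9$ strictly exceeds $6$ and $\mathcal{V}$ has no irreducible closed subset of dimension larger than $9$, the subspace $L$ is a component and $\dim\mathcal{V}=9$. The two $6$-dimensional sets $\overline{\mathcal{O}({\mathcal A}^3_1)}$ and $\mathcal{F}$ cannot contain one another, since equality would force the dense orbit $\mathcal{O}({\mathcal A}^3_1)$ to meet some $\mathcal{O}({\mathcal A}^3_3(\alpha))$, i.e. ${\mathcal A}^3_1\cong{\mathcal A}^3_3(\alpha)$, contradicting Theorem \ref{3cl}. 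Hence these three irreducible closed subsets are exactly the irreducible components of $\mathcal{V}$, with ${\mathcal A}^3_1$ rigid, and $\dim\mathcal{V}=9$. I expect the main obstacle to be the degeneration bookkeeping in the third step: confirming that the \emph{entire} variety $\mathcal{J}$ (not just the one distinguished Jacobi-Jordan algebra) together with the orbit of ${\mathcal A}^3_2$ flows from the ${\mathcal A}^3_3(\alpha)$-family, which requires the full short list of $3$-dimensional Jacobi-Jordan algebras, and checking that no additional degenerations among ${\mathcal A}^3_1$, ${\mathcal A}^3_2$, ${\mathcal A}^3_3(\alpha)$ and the anticommutative family collapse two of the three candidate components.
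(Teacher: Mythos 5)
Your argument is correct, but it takes a genuinely different route from the paper's. The paper proves this corollary in one line: since every ${\mathcal A}{\rm BD}$-algebra is a nilalgebra of nilindex $3$ (Proposition \ref{3nil}) and Theorem \ref{3cl} identifies exactly which complex $3$-dimensional nilalgebras are ${\mathcal A}{\rm BD}$, the authors simply restrict the complete degeneration graph of $3$-dimensional nilalgebras computed in \cite{ks25} to the ${\mathcal A}{\rm BD}$-subgraph and read off the components and the dimension. You instead re-derive all the geometric input from scratch: the derivation computations giving orbit dimensions $6$ for ${\mathcal A}^3_1$ and $5$ for ${\mathcal A}^3_3(\alpha)$ (both check out), the absorption of the whole Jacobi--Jordan locus into $\mathcal F$ via the $\alpha\to 0$ limit combined with the list of $3$-dimensional Jacobi--Jordan algebras from \cite{Burd} (only the rank-one and rank-two algebras occur, and the orbit closure of the rank-two one is the whole locus), the rescaling $e_3\mapsto \alpha^{-1}e_3$ exhibiting ${\mathcal A}^3_2$ as a limit of the family, and a pairwise-incomparability argument. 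What the paper's route buys is brevity and the fact that \cite{ks25} already certifies all non-degenerations, so no separate separation argument is needed; what your route buys is independence from \cite{ks25} -- your only external inputs are Theorem \ref{3cl} and the classification of $3$-dimensional Jacobi--Jordan algebras -- together with an explicit description of the components as $L$, $\mathcal F$ and $\overline{\mathcal O({\mathcal A}^3_1)}$. One small streamlining of your last step: every ${\mathcal A}^3_3(\alpha)$, every $3$-dimensional Jacobi--Jordan algebra and ${\mathcal A}^3_2$ annihilate all triple products, and vanishing of triple products is a closed ${\rm GL}_3$-invariant condition, so $\mathcal F$ consists of $2$-step nilpotent algebras while ${\mathcal A}^3_1$ is not even nilpotent; this gives ${\mathcal A}^3_1\notin\mathcal F$ and hence the non-containment of the two $6$-dimensional pieces without invoking the density-of-orbits argument.
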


\begin{proof}
    The complete graph of degenerations of complex $3$-dimensional nilalgebras is given in \cite{ks25}. Taking the subgraph which corresponds to ${\mathcal A}{\rm BD}$-algebras, we have our statement.
\end{proof}

\begin{corollary}[see, \cite{AK21}]
 The variety of complex  $4$-dimensional nilpotent  ${\mathcal A}{\rm BD}$-algebras has dimension $11$ and three irreducible components, defined by 
 one rigid algebra $\mathcal{A}_4^4$ and 
 two components correspond to $2$-step nilpotent algebras.

\end{corollary}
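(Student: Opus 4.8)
The plan is to deduce the statement from the known geometric classification of $4$-dimensional nilpotent symmetric Leibniz algebras carried out in \cite{AK21}. The identities \eqref{Def1}--\eqref{Def2} defining ${\mathcal A}{\rm BD}$-algebras, together with the nilpotency condition, are polynomial in the structure constants, so the complex $4$-dimensional nilpotent ${\mathcal A}{\rm BD}$-algebras form a Zariski-closed, ${\rm GL}_4({\mathbb C})$-invariant subset of the $64$-dimensional affine space of all bilinear products on ${\mathbb C}^4$; the same holds for the complex $4$-dimensional nilpotent symmetric Leibniz algebras. By part \textbf{A} of Theorem \ref{4ncl} these two subsets coincide \emph{as sets}, hence they coincide as closed subvarieties: they share the same dimension, the same graph of degenerations, and the same irreducible components. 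It therefore suffices to transport the result of \cite{AK21}, which asserts precisely that this variety is $11$-dimensional with three irreducible components, namely $\overline{{\rm GL}_4({\mathbb C})\cdot\mathcal{A}^4_4}$ and the closures of two families of $2$-step nilpotent algebras.

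For completeness I would sketch how the three components are separated, working directly from Theorem \ref{4ncl}. By that theorem every algebra in the variety is either $2$-step nilpotent or isomorphic to one of $\mathcal{A}^4_1,\dots,\mathcal{A}^4_5$. The $2$-step nilpotent locus is a closed ${\rm GL}_4({\mathbb C})$-invariant subset, and by \cite{AK21} it decomposes into exactly two irreducible components, each of dimension at most $11$. On the other hand $\mathcal{A}^4_4$ is genuinely $3$-step nilpotent (its cube $\langle e_4\rangle$ is nonzero), so it lies outside the $2$-step locus; a direct computation of $\mathfrak{Der}(\mathcal{A}^4_4)$ gives $\dim{\rm GL}_4({\mathbb C})\cdot\mathcal{A}^4_4 = 16 - \dim\mathfrak{Der}(\mathcal{A}^4_4) = 11$, and the degeneration graph in \cite{AK21} shows that $\mathcal{A}^4_4$ admits no proper degeneration from within the variety, while all of $\mathcal{A}^4_1,\mathcal{A}^4_2,\mathcal{A}^4_3,\mathcal{A}^4_5$ are degenerations of $\mathcal{A}^4_4$. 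Hence $\overline{{\rm GL}_4({\mathbb C})\cdot\mathcal{A}^4_4}$ is an $11$-dimensional irreducible component, distinct from the two $2$-step components, and no further component occurs, so the variety has dimension $11$ and exactly three irreducible components.

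The main obstacle is organizational rather than conceptual: one has to be sure that the coincidence in part \textbf{A} of Theorem \ref{4ncl} is an equality of algebraic varieties and not merely a bijection of isomorphism classes --- which it is, because both sides are solution sets of polynomial equations --- and then one must record the orbit dimension of $\mathcal{A}^4_4$ together with the degeneration facts (no proper degeneration into $\mathcal{A}^4_4$; the $2$-step locus not contained in its orbit closure) and the decomposition of the $2$-step locus into two components. All of these verifications are routine and have already been made in \cite{AK21}, from which the corollary follows.
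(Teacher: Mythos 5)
Your proposal is correct and follows essentially the same route as the paper: the corollary is obtained by combining part \textbf{A} of Theorem \ref{4ncl} (the set-theoretic, hence Zariski-closed, coincidence of the $4$-dimensional nilpotent ${\mathcal A}{\rm BD}$-algebras with the nilpotent symmetric Leibniz algebras) with the geometric classification of the latter in \cite{AK21}, from which the dimension $11$, the rigidity of $\mathcal{A}^4_4$, and the two $2$-step nilpotent components are imported. Your additional remarks on orbit dimension and the degeneration graph only make explicit what the citation to \cite{AK21} already contains.
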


Jacobson proved that each complex finite-dimensional Lie algebra with an invertible derivation is nilpotent \cite{J55}.
The minimal dimension of complex nilpotent Lie algebras without invertible derivations is $7$  $\big($an example of $7$-dimensional Lie algebra without invertible derivation is given in \cite{B02} and by direct calculation of derivations of nilpotent $6$-dimensional Lie algebras from \cite{S}, we can see that all of them admit invertible derivations$\big).$

\begin{proposition}
    The minimal dimension of nilpotent  ${\mathcal A}{\rm BD}$-algebras without invertible derivations is $4.$
\end{proposition}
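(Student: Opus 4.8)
The plan is to establish the equality by two one-sided bounds: first, that every nilpotent ${\mathcal A}{\rm BD}$-algebra of dimension at most $3$ admits an invertible derivation; and second, that some $4$-dimensional nilpotent ${\mathcal A}{\rm BD}$-algebra does not.

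For the lower bound I would isolate the elementary fact that \emph{every $2$-step nilpotent algebra $({\rm B},\cdot)$ admits an invertible derivation}: $2$-step nilpotency means ${\rm B}\cdot {\rm B}^{2}={\rm B}^{2}\cdot {\rm B}=0$, i.e. ${\rm B}^{2}\subseteq {\rm Ann}({\rm B})$, so fixing a complement $W$ with ${\rm B}=W\oplus {\rm B}^{2}$ and setting $D|_{W}={\rm id}$, $D|_{{\rm B}^{2}}=2\,{\rm id}$ gives a linear automorphism which is a derivation — for $w_{1},w_{2}\in W$ both sides of the Leibniz rule equal $2(w_{1}\cdot w_{2})\in{\rm B}^{2}$, and if one argument lies in ${\rm B}^{2}\subseteq{\rm Ann}({\rm B})$ both sides vanish. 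It then remains to note that every nilpotent ${\mathcal A}{\rm BD}$-algebra of dimension $\le 3$ is $2$-step nilpotent. Dimensions $1$ and $2$ are immediate (the lower central series strictly decreases to $0$, so ${\rm A}^{3}=0$ there). In dimension $3$ one inspects Theorem \ref{3cl}: ${\mathcal A}^{3}_{1}$ is not nilpotent, ${\mathcal A}^{3}_{2}$ and ${\mathcal A}^{3}_{3}(\alpha)$ have ${\mathcal A}^{2}=\langle e_{2}\rangle\subseteq{\rm Ann}({\mathcal A})$, and the anticommutative, resp. commutative (Jacobi-Jordan), nilpotent $3$-dimensional algebras are all $2$-step nilpotent; hence each carries an invertible derivation and the sought minimal dimension is at least $4$.

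For the upper bound I would take the algebra ${\mathcal A}^{4}_{2}$ from Theorem \ref{4ncl}, namely $e_{1}\cdot e_{2}=-e_{3}+e_{4}$, $e_{1}\cdot e_{3}=-e_{4}$, $e_{2}\cdot e_{1}=e_{3}$, $e_{3}\cdot e_{1}=e_{4}$, whose lower central series is ${\mathcal A}^{4}_{2}\supseteq\langle e_{3},e_{4}\rangle\supseteq\langle e_{4}\rangle\supseteq 0$. Any derivation $D$ preserves this series, so $D(e_{1})=ae_{1}+be_{2}+(\ast)e_{3}+(\ast)e_{4}$, $D(e_{2})=ce_{1}+de_{2}+(\ast)e_{3}+(\ast)e_{4}$, $D(e_{3})=\mu e_{3}+(\ast)e_{4}$, $D(e_{4})=\nu e_{4}$, and $D$ is invertible iff $ad-bc\neq 0$ and $\mu\nu\neq 0$. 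A short Leibniz-rule computation gives $b=0$ (from $e_{1}\cdot e_{1}=0$), $c=0$ (from $e_{2}\cdot e_{3}=0$), then $\mu=a+d$ and $\nu=a+\mu$ (from $e_{2}\cdot e_{1}=e_{3}$ and $e_{3}\cdot e_{1}=e_{4}$), and finally $\nu=\mu$ (from $e_{1}\cdot e_{2}=-e_{3}+e_{4}$, matching the coefficient of $e_{4}$). Hence $a=0$, so $ad-bc=0$ and $D$ is singular. Thus ${\mathcal A}^{4}_{2}$ has no invertible derivation — although it does admit nonzero derivations, e.g. $D=\mathrm{diag}(0,1,1,1)$ — and combining the two bounds yields the asserted minimal dimension $4$.

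The computation for ${\mathcal A}^{4}_{2}$ is the bulkiest part, but its whole content is the forced relation $a=0$: the $e_{4}$-term in $e_{1}\cdot e_{2}=-e_{3}+e_{4}$ wants $e_{4}$ to carry the same ``weight'' $w_{1}+w_{2}$ as $e_{3}$, while $e_{3}\cdot e_{1}=e_{4}$ wants $w_{4}=w_{3}+w_{1}=w_{1}+w_{2}+w_{1}$, and these are compatible only when $w_{1}=0$. The one step that needs some care is the reduction in the lower bound — knowing that the $3$-dimensional nilpotent ${\mathcal A}{\rm BD}$-algebras are all $2$-step nilpotent — which can be handled either by the inspection of Theorem \ref{3cl} above or, self-contained, by invoking power-associativity (Proposition \ref{3nil}) to exclude a $3$-dimensional nilpotent algebra with $1$-dimensional ${\rm A}^{3}$.
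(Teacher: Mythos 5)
Your proposal is correct and takes essentially the same route as the paper: the same witness algebra ${\mathcal A}^4_2$, whose derivations are computed (your relations $b=c=0$, $\mu=a+d$, $\nu=a+\mu=\mu$, hence $a=0$, reproduce exactly the paper's form $\varphi(e_1)=\alpha_{13}e_3+\alpha_{14}e_4$, etc., so every derivation is singular), together with a verification that all nilpotent ${\mathcal A}{\rm BD}$-algebras of dimension at most $3$ admit invertible derivations. Your lower bound is merely packaged more conceptually than the paper's ``direct checking'' --- observing that these algebras are $2$-step nilpotent and then using the grading derivation ${\rm id}\oplus 2\,{\rm id}$ --- but the substance is the same.
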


\begin{proof}
By direct checking of derivations of $3$-dimensional nilpotent  ${\mathcal A}{\rm BD}$-algebras, we have that all of them do admit invertible derivations.
The derivations of   $\mathcal{A}^4_{2}$ have the following form
\begin{longtable}{lcllcl}
$\varphi(e_1)$ &$=$&$\alpha_{13}e_3+\alpha_{14}e_4,$ & 
$\varphi(e_2)$ &$=$&$\alpha_{33}e_2+\alpha_{34}e_3+\alpha_{24}e_4,$ \\
$\varphi(e_3)$ &$=$&$\alpha_{33}e_3+\alpha_{34}e_4,$ & 
$\varphi(e_4)$ &$=$&$\alpha_{33} e_4.$ 
\end{longtable}\noindent
Hence,  each derivation of   $\mathcal{A}^4_{2}$ is non-invertible.
\end{proof}

We wish to finish our paper on the following open question motivated by Theorem \ref{3cl}.

\begin{conjecture}\footnote{By the direct calculations of skew-symmetric biderivations (see, Proposition \ref{prop54}) of suitable Jacobi--Jordan algebras from the classification in \cite{Burd}, the question was confirmed for dimensions $4$ and $5.$ 
} 
    Is a perfect (in particular, simple) ${\mathcal A}{\rm BD}$-algebra  anticommutative?
\end{conjecture}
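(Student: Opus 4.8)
The plan is to treat the question's two clauses separately. For the \emph{simple} case I would give a short affirmative argument: if $({\rm A},\cdot)$ is simple then $({\rm A}^{+})^{2}={\rm A}\circ {\rm A}$ is a two-sided ideal of $({\rm A},\cdot)$ by Lemma \ref{bdnov}, hence it is $\{0\}$ or ${\rm A}$; it cannot be all of ${\rm A}$, since ${\rm A}^{+}$ is a Jacobi-Jordan algebra (Proposition \ref{ABDJJA}) and a nonzero finite-dimensional Jacobi-Jordan algebra is nilpotent -- as already used in the proof of Theorem \ref{Pro} -- so $({\rm A}^{+})^{2}\subsetneq {\rm A}^{+}$. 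Therefore $({\rm A}^{+})^{2}=\{0\}$, i.e.\ ${\rm A}$ is anticommutative. The same remark -- $({\rm A}^{+})^{2}$ is always a proper subspace -- shows that in a perfect ${\mathcal A}{\rm BD}$-algebra the perfectness can only be supported by $[{\rm A},{\rm A}]$, which is exactly what makes the merely-perfect version subtle.

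For a general \emph{perfect} ${\mathcal A}{\rm BD}$-algebra, however, I expect the answer to be \textbf{negative}, and I would resolve the question by exhibiting the following counterexamples. Let $({\rm P},\bullet)$ be any perfect anticommutative algebra carrying a nonzero symmetric bilinear form $B\colon {\rm P}\times {\rm P}\to{\mathbb C}$ that is invariant in the sense $B(x\bullet y,z)=-B(y,x\bullet z)$; the Killing form on ${\rm P}=\mathfrak{sl}_2$ is the model. On ${\rm A}:={\rm P}\oplus{\mathbb C}c$ define an anticommutative product by $[x,y]_{\rm A}:=x\bullet y$ for $x,y\in{\rm P}$ and $[c,{\rm A}]_{\rm A}=0$, a commutative product by $x\circ y:=B(x,y)\,c$ for $x,y\in{\rm P}$ and $c\circ {\rm A}=0$, and set $u\cdot v:=\frac{1}{2}\bigl([u,v]_{\rm A}+u\circ v\bigr)$. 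Then ${\rm A}^{+}=({\rm A},\circ)$ has $({\rm A}^{+})^{2}={\mathbb C}c$ and $({\rm A}^{+})^{3}=\{0\}$, so it is a Jacobi-Jordan algebra and $({\rm A},\cdot)$ is Jacobi-Jordan-admissible; moreover $[\cdot,\cdot]_{\rm A}$ is a skew-symmetric anti-biderivation of ${\rm A}^{+}$, its only nonvacuous instance being $[x,y]_{\rm A}\circ z+y\circ [x,z]_{\rm A}=\bigl(B(x\bullet y,z)+B(y,x\bullet z)\bigr)c=0$. By Proposition \ref{pr2}, $({\rm A},\cdot)$ is an ${\mathcal A}{\rm BD}$-algebra; it is perfect because ${\rm A}\cdot {\rm A}=[{\rm A},{\rm A}]_{\rm A}+{\rm A}\circ {\rm A}={\rm P}\bullet{\rm P}+{\mathbb C}c={\rm A}$ (using that ${\rm P}$ is perfect and $B\neq 0$), and it is not anticommutative because $x\circ y=B(x,y)c\neq 0$ for suitable $x,y$. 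With ${\rm P}=\mathfrak{sl}_2$ this already gives a $4$-dimensional counterexample.

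The outcome I would record is therefore: the \emph{simple} hypothesis forces anticommutativity, but the weaker \emph{perfect} one does not. The genuinely interesting problem that remains is to characterise the gap: one natural family of non-anticommutative perfect examples is produced exactly as above, from a nonzero invariant symmetric bilinear form taking values in $({\rm A}^{+})^{2}$, so the sharp statement to aim at is that a perfect ${\mathcal A}{\rm BD}$-algebra possessing no nonzero ideal that is contained in $({\rm A}^{+})^{2}$ and annihilated by the product $\cdot$ is anticommutative. The hard part there will be to prove that this ``Casimir-type'' mechanism is the only obstruction, which requires control of invariant symmetric bilinear forms on perfect anticommutative algebras well beyond the semisimple-Lie case that makes the obstruction visible.
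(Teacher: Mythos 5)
There is no proof in the paper to compare you against: the statement is posed as an open question, and the paper offers only partial results in its direction (Proposition \ref{prop54}, the Block-based proposition for \emph{simple flexible} algebras, and a footnote asserting a computational confirmation in dimensions $4$ and $5$). Your treatment of the simple case is correct in the finite-dimensional setting and goes beyond what the paper proves: by Lemma \ref{bdnov} the subspace ${\rm A}\circ{\rm A}$ is a two-sided ideal of $({\rm A},\cdot)$, and since a nonzero finite-dimensional Jacobi--Jordan algebra is nilpotent (a fact the paper itself invokes in the proof of Theorem \ref{Pro}) it cannot be all of ${\rm A}$, so simplicity forces ${\rm A}\circ{\rm A}=0$. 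This removes the flexibility hypothesis from the paper's proposition based on Block's theorem; you should state explicitly that finite dimension is used at the nilpotency step.

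The delicate part is your $4$-dimensional counterexample, because it collides head-on with the paper: the footnote claims the question was confirmed in dimensions $4$ and $5$, and the plus algebra of your example (the rank-$3$ quadratic-form algebra $x\circ y=B(x,y)c$, i.e.\ $u_1^2=u_2^2=u_3^2=c$) is precisely one of the Jacobi--Jordan algebras from \cite{Burd} the authors say they examined, and it visibly admits the required skew-symmetric anti-biderivation (the $\mathfrak{so}(B)\cong\mathfrak{sl}_2$ bracket, with $\delta(V,V)=V$). I checked your construction directly against \eqref{Def1} and \eqref{Def2}, not only through Proposition \ref{pr2}: since $c$ annihilates $({\rm A},\cdot)$, both sides of each identity land in ${\mathbb C}c$, and the only nontrivial instances reduce to $B(x\bullet y,z)+B(y,x\bullet z)=0$, which is exactly the ad-invariance of the Killing form; moreover $[{\rm A},{\rm A}]=\mathfrak{sl}_2$ and ${\rm A}\circ{\rm A}={\mathbb C}c$ give perfectness, while $h\cdot h\neq 0$ rules out anticommutativity. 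So, as far as the definitions of this paper go, I find no flaw: your example answers the question negatively, and the discrepancy lies with the paper's unproved footnote (and, if one reads the symbol $\circ$ in Proposition \ref{pr13} as composition of operators, with that proposition as well --- e.g.\ $h\cdot(e\cdot f)=2c$ while $(f\cdot e)\cdot h=-2c$ --- though under the anticommutator reading of $\circ$ there is no conflict). Given this tension, you must present the verification of \eqref{Def1}--\eqref{Def2} in full detail rather than delegating it to Proposition \ref{pr2}, and flag the contradiction with the footnote explicitly. Your closing ``Casimir-type obstruction'' refinement is reasonable speculation but is not established by anything in the proposal.
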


A particular answer to the mentioned question and a way how it can be resolved gives the following trivial proposition. 

\begin{proposition}\label{prop54}
    Let $({\rm J}, \circ)$ be a  Jacobi-Jordan algebra, which can be decomposed in the following direct sum $\big($as vector spaces$\big)$ 
    ${\rm J}={\mathcal J}+{\rm J}^2$ and
     it does not admit  a skew-symmetric biderivation $\mathfrak D$ with condition  
    ${\mathcal J} \subseteq \mathfrak  D({\mathcal J},{\mathcal J}).$
Then each ${\mathcal A}{\rm BD}$-algebra $({\rm A}, \cdot),$ such that ${\rm A}^+ = {\rm J},$ is not perfect.

\end{proposition}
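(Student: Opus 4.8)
The plan is to pass to the language of the remark following Proposition~\ref{pr2}: an ${\mathcal A}{\rm BD}$-algebra with plus-algebra ${\rm J}$ is the same datum as the Jacobi-Jordan algebra $({\rm J},\circ)$ together with a skew-symmetric anti-biderivation, namely its bracket $[\cdot,\cdot]$. So let $({\rm A},\cdot)$ be an ${\mathcal A}{\rm BD}$-algebra with ${\rm A}^{+}=({\rm J},\circ)$, and set $\mathfrak D:=[\cdot,\cdot]$; by Proposition~\ref{pr2} this $\mathfrak D$ is skew-symmetric and satisfies $\mathfrak D(x,y\circ z)=-\big(\mathfrak D(x,y)\circ z+y\circ\mathfrak D(x,z)\big)$, that is, it is a skew-symmetric anti-biderivation (an ${\mathcal A}{\rm BD}$-structure) on $({\rm J},\circ)$. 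It then suffices to prove that a perfect $({\rm A},\cdot)$ would supply a $\mathfrak D$ with ${\mathcal J}\subseteq\mathfrak D({\mathcal J},{\mathcal J})$, which by hypothesis is impossible.

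The first step is to describe the derived subspace. Depolarizing, $x\cdot y=\tfrac12\big(x\circ y+\mathfrak D(x,y)\big)$, and conversely $x\circ y=x\cdot y+y\cdot x$ and $\mathfrak D(x,y)=x\cdot y-y\cdot x$ both lie in ${\rm A}\cdot{\rm A}$; hence, as subspaces of the common underlying vector space, ${\rm A}^{2}={\rm A}\cdot{\rm A}={\rm J}\circ{\rm J}+\mathfrak D({\rm J},{\rm J})$. Therefore $({\rm A},\cdot)$ is perfect if and only if ${\rm J}={\rm J}\circ{\rm J}+\mathfrak D({\rm J},{\rm J})$.

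The second step is to replace ${\rm J}$ by the space ${\mathcal J}$ of generators on the right. Write ${\rm J}={\mathcal J}\oplus{\rm J}^{2}$, with ${\rm J}^{2}={\rm J}\circ{\rm J}$, as in the statement. The anti-biderivation identity gives $\mathfrak D(x,y\circ z)=-\big(\mathfrak D(x,y)\circ z+y\circ\mathfrak D(x,z)\big)\in{\rm J}^{2}$, and by skew-symmetry also $\mathfrak D(x\circ y,z)=-\mathfrak D(z,x\circ y)\in{\rm J}^{2}$; expanding each argument of $\mathfrak D({\rm J},{\rm J})$ along ${\rm J}={\mathcal J}+{\rm J}^{2}$ by bilinearity, every term involving ${\rm J}^{2}$ lands in ${\rm J}^{2}$, so $\mathfrak D({\rm J},{\rm J})\subseteq\mathfrak D({\mathcal J},{\mathcal J})+{\rm J}^{2}$ and ${\rm J}\circ{\rm J}+\mathfrak D({\rm J},{\rm J})={\rm J}^{2}+\mathfrak D({\mathcal J},{\mathcal J})$. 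Hence $({\rm A},\cdot)$ is perfect if and only if ${\mathcal J}\subseteq{\rm J}^{2}+\mathfrak D({\mathcal J},{\mathcal J})$, equivalently the projection of $\mathfrak D({\mathcal J},{\mathcal J})$ onto ${\mathcal J}$ along ${\rm J}^{2}$ is surjective.

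The proof then concludes by contraposition: if $({\rm J},\circ)$ admits no skew-symmetric anti-biderivation $\mathfrak D$ with ${\mathcal J}\subseteq\mathfrak D({\mathcal J},{\mathcal J})$ — the inclusion being read modulo ${\rm J}^{2}$, which is exactly what the splitting ${\rm J}={\mathcal J}+{\rm J}^{2}$ in the statement records — then by the two steps above no ${\mathcal A}{\rm BD}$-algebra $({\rm A},\cdot)$ with ${\rm A}^{+}={\rm J}$ is perfect. The bulk of this is routine depolarization bookkeeping; the one point needing care is precisely this ${\rm J}^{2}$-correction: since $\mathfrak D({\mathcal J},{\mathcal J})$ need not be contained in ${\mathcal J}$, the condition ``${\mathcal J}\subseteq\mathfrak D({\mathcal J},{\mathcal J})$'' must be interpreted in the quotient ${\rm J}/{\rm J}^{2}$, and stating the correspondence between ${\mathcal A}{\rm BD}$-algebras and (anti-)biderivations so that the hypothesis actually bites is the crux.
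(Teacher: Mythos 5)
Your proof is correct, and it is essentially the argument the paper has in mind: the paper states Proposition \ref{prop54} without proof (it is announced as a trivial consequence of the preceding discussion), and the evident intended route is exactly yours --- identify the ${\mathcal A}{\rm BD}$-structure with the skew-symmetric anti-biderivation $\mathfrak D=[\cdot,\cdot]$ of ${\rm A}^{+}$ via Proposition \ref{pr2} and the remark following it, observe that ${\rm A}^{2}={\rm J}\circ{\rm J}+\mathfrak D({\rm J},{\rm J})$ by depolarization, and use $\mathfrak D\big({\rm J},{\rm J}\circ{\rm J}\big)\subseteq{\rm J}^{2}$ to reduce the perfectness condition to ${\mathcal J}\subseteq\mathfrak D({\mathcal J},{\mathcal J})+{\rm J}^{2}$. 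Your closing caveat is worth stressing because it concerns the statement rather than your argument: perfectness of $({\rm A},\cdot)$ only produces an anti-biderivation with ${\mathcal J}\subseteq\mathfrak D({\mathcal J},{\mathcal J})+{\rm J}^{2}$, so the hypothesis ``${\mathcal J}\subseteq\mathfrak D({\mathcal J},{\mathcal J})$'' must indeed be read modulo ${\rm J}^{2}$ (equivalently, restated with the ${\rm J}^{2}$-correction) for it to exclude perfectness; under the literal inclusion the hypothesis is weaker than what this argument needs. Likewise, ``biderivation'' in the statement should be understood as ``anti-biderivation'' (an ${\mathcal A}{\rm BD}$-structure), since that is what the correspondence of Proposition \ref{pr2} supplies; you work with anti-biderivations, which is the reading under which the proposition is proved (the containment $\mathfrak D({\rm J},{\rm J}\circ{\rm J})\subseteq{\rm J}^{2}$ you use holds for either notion, but the correspondence step does not).
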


Another particular answer to the mentioned question follows directly from the main Theorem in \cite{Block68}.

\begin{proposition}
    Let $({\rm A}, \cdot)$ be a complex finite-dimensional simple flexible ${\mathcal A}{\rm BD}$-algebra, then $({\rm A}, \cdot)$ is anticommutative.
\end{proposition}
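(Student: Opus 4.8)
The plan is to reduce the statement to Block's theorem on simple flexible nodal algebras (or more precisely to the structure theory of simple flexible algebras developed in \cite{Block68}). First I would use Proposition~\ref{aflABD}: an $\mathcal{A}$flexible $\mathcal{A}{\rm BD}$-algebra is automatically flexible and satisfies the identity $x\cdot(y\cdot z)=(z\cdot y)\cdot x$. Combined with Proposition~\ref{3nil}, $({\rm A},\cdot)$ is a flexible power-associative nilalgebra with nilindex $3$; in particular $x^2\cdot x=0$ for all $x$, so the associated plus-algebra ${\rm A}^+$ is a (commutative, power-associative) Jacobi-Jordan algebra, hence nil, hence not simple unless it is trivial or one-dimensional.

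Next I would invoke the classification of simple flexible algebras. A finite-dimensional simple flexible algebra over $\mathbb{C}$ is, by Block's results, either commutative (hence a simple Jordan algebra) or a simple Lie algebra, or is obtained from these by the standard constructions — but here the key point is the \emph{trace form} / Peirce-decomposition argument: in a simple flexible algebra, either $x\circ x\neq 0$ for some $x$ (forcing ${\rm A}^+$ to be a nonzero simple Jordan algebra, which is impossible since we showed ${\rm A}^+$ is a nil Jacobi-Jordan algebra), or $x\circ x=0$ for all $x$, i.e. ${\rm A}$ is anticommutative. Thus the main step is to show that a simple flexible $\mathcal{A}{\rm BD}$-algebra cannot have a nonzero symmetric part: the plus algebra ${\rm A}^+$ is a nil Jacobi-Jordan algebra, so if ${\rm A}^+\neq 0$ then ${\rm Ann}({\rm A}^+)\neq 0$; I would then check, using identities \eqref{Def1}--\eqref{Def2}, that a suitable radical-type ideal built from ${\rm A}^+$ (e.g. the ideal generated by $({\rm A}^+)^2$ together with the annihilator constructions available from Lemma~\ref{bdnov}) is a proper nonzero two-sided ideal of $({\rm A},\cdot)$, contradicting simplicity — unless ${\rm A}^+=0$, i.e. ${\rm A}$ is anticommutative.

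The remaining step is to rule out the degenerate alternative: if ${\rm A}^+=0$ then ${\rm A}$ is anticommutative and we are done. So the whole argument is: simplicity $+$ flexibility $+$ ($\mathcal{A}{\rm BD}$ $\Rightarrow$ ${\rm A}^+$ nil Jacobi-Jordan) forces ${\rm A}^+$ to be a proper ideal unless it vanishes. The main obstacle I anticipate is verifying that $({\rm A}^+)^2$ (or the annihilator of ${\rm A}^+$ inside ${\rm A}$) really is a two-sided ideal of $({\rm A},\cdot)$ and is proper: Lemma~\ref{bdnov} gives that $I\circ J$ is a two-sided ideal whenever $I,J$ are, so taking $I=J={\rm A}$ shows $({\rm A}^+)^2 = {\rm A}\circ{\rm A}$ is a two-sided ideal of $({\rm A},\cdot)$; simplicity then forces ${\rm A}\circ{\rm A}={\rm A}$ or ${\rm A}\circ{\rm A}=0$. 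Since ${\rm A}^+$ is a nil (hence nilpotent, by the finite-dimensional Jacobi-Jordan structure theory) commutative algebra, ${\rm A}\circ{\rm A}={\rm A}$ is impossible unless ${\rm A}^+=0$. Hence ${\rm A}\circ{\rm A}=0$, i.e. ${\rm A}$ is anticommutative, which is exactly the claim; Block's theorem is then only needed to identify the anticommutative simple flexible algebras with simple Lie algebras, but that is beyond what the statement asks.
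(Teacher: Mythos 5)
Your final argument is correct, but it takes a genuinely different route from the paper's. The paper proves this proposition by a one-line reduction to the main theorem of Block \cite{Block68} on the determination of ${\rm A}^+$ for simple \emph{flexible} algebras: since ${\rm A}^+$ is a Jacobi--Jordan algebra (Proposition \ref{ABDJJA}), hence nil and so never simple, Block's dichotomy forces ${\rm A}$ to be anticommutative. You instead make the decisive step self-contained: by Lemma \ref{bdnov} with $I=J={\rm A}$, the subspace ${\rm A}\circ{\rm A}$ is a two-sided ideal of $({\rm A},\cdot)$, so simplicity gives ${\rm A}\circ{\rm A}\in\{0,{\rm A}\}$; and since a finite-dimensional complex Jacobi--Jordan algebra is nilpotent (a fact the paper itself relies on in the proof of Theorem \ref{Pro}), the case $({\rm A}^+)^2={\rm A}^+$ is impossible for ${\rm A}\neq 0$, whence ${\rm A}\circ{\rm A}=0$, i.e.\ ${\rm A}$ is anticommutative. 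What each approach buys: the paper's proof is shorter but leans on Block's structure theory and genuinely needs the flexibility hypothesis; your argument is elementary, never uses flexibility, and therefore proves the stronger statement that every complex finite-dimensional simple ${\mathcal A}{\rm BD}$-algebra is anticommutative, which in fact settles the ``simple'' case of the paper's closing question in finite dimension over $\mathbb{C}$. Two cosmetic blemishes, neither load-bearing since your last paragraph is a complete proof on its own: the opening appeal to Proposition \ref{aflABD} is misplaced (the hypothesis here is flexibility, not $\mathcal{A}$flexibility), and the mid-proof sketch of the classification of simple flexible algebras is imprecise as stated.
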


 Let us also mention that the open question is related to a non-commutative version of the Albert’s problem: {\it is every finite-dimensional $\big($commutative$\big)$ power
associative nilalgebra solvable}? 
For each $n > 3,$ Correa and Hentzel constructed  non-(anti)commutative $n$-dimensional non-solvable nilalgebras \cite{CH}, 
but these algebras are not ${\mathcal A}{\rm BD}$-algebras.


\end{document}